\numberwithin{equation}{section}
\newtheorem{theo}{Theorem}[section]
\newtheorem{conj}[theo]{Conjecture}
\newtheorem{coro}[theo]{Corollary}
\newtheorem{prop}[theo]{Proposition}
\newtheorem{lem}[theo]{Lemma}
\newtheorem{defi}[theo]{Definition}
\theoremstyle{remark}
\newtheorem*{Remark*}{Remark}
\newtheorem*{Remarks*}{Remarks}
\newcommand*{\house}[1]{%
 \mathord{%
 \mathpalette\@house{#1}%
 }%
}
\newcommand*{\@house}[2]{%
 \dimen@=\fontdimen8 %
 \ifx#1\scriptscriptstyle\scriptscriptfont
 \else\ifx#1\scriptstyle\scriptfont
 \else\textfont\fi\fi
 3 %
 \sbox0{%
 $#1%
 \vrule width\dimen@\relax
 \overline{%
 \kern2\dimen@
 \begingroup % to keep changes of \dimen@ in #2 local
 #2%
 \endgroup
 \kern2\dimen@
 }%
 \vrule width\dimen@\relax
 \mathsurround=1.5\dimen@ % outside margin
 $%
 }%
 \ht0=\dimexpr\ht0-\dimen@\relax
 \dp0=\dimexpr\dp0+2\dimen@\relax
 \vbox{%
 \kern\dimen@ % reinsert previously removed space
 \copy0 %
 }%
}
\newcommand{\N}{\mathbb{N}}
\newcommand{\Z}{\mathbb{Z}}
\newcommand{\Q}{\mathbb{Q}}
\newcommand{\R}{\mathbb{R}}
\newcommand{\C}{\mathbb{C}}
\newcommand{\Qbar}{\overline{\mathbb Q}}
\newcommand{\etoile}{^*}
\newcommand{\calI}{\mathcal I}
\newcommand{\calE}{\mathcal E}
\newcommand{\calP}{\mathcal P}
\newcommand{\calV}{\mathcal V}
\newcommand{\Span}{{\rm Span}}
\begin{document}

\title{Zeros of $E$-functions and of exponential polynomials defined over $\Qbar$}
\date\today
\author{S. Fischler and T. Rivoal}
\maketitle

\begin{abstract}
Zeros of Bessel functions $J_\alpha$ play an important role in physics. They are a motivation for studying zeros of exponential polynomials defined over $\Qbar$, and more generally of  $E$-functions. In this paper we partially characterize $E$-functions with zeros of the same multiplicity, and prove a special case of a conjecture of Jossen on entire quotients of $E$-functions, related to Ritt's theorem and Shapiro's conjecture on exponential polynomials. We also deduce from Schanuel's conjecture many results on zeros of exponential polynomials over $\Qbar$, including $\pi$, logarithms of algebraic numbers, and zeros of $J_\alpha$ when $2\alpha$ is an odd integer. For the latter we define (if $\alpha\neq\pm1/2$) an analogue of the minimal polynomial and Galois conjugates of algebraic numbers. At last, we study conjectural generalizations to factorization and zeros of $E$-functions.
\end{abstract}

\section{Introduction}

 We recall the definition of $E$-functions. As usual, we embed $\Qbar$ into $\mathbb C$. 
A power series $f(x)=\sum_{n=0}^{\infty} a_n x^n/n! \in \Qbar[[x]]$ is said to be an $E$-function~if 

$(i)$ $f(x)$ is solution of a non-zero linear differential equation with coefficients in $\Qbar(x)$.
\smallskip

$(ii)$ There exists $C_1>0$ such that all Galois conjugates of $a_n$ have modulus $\le C_1^{n+1}$, for $n\ge 0$.

\smallskip

$(iii)$ There exists $C_2>0$ and a sequence of positive integers $d_n$, with $d_n \leq C_2^{n+1}$, such that $d_na_m$ are algebraic integers for all~$m\le n$.

\noindent   If $a_n\in \mathbb Q$, $(ii)$ and $(iii)$ read $\vert a_n\vert \le C_1^{n+1}$ and $d_na_m\in \mathbb Z$; in $(i)$, there exists such a differential equation with coefficients in $\mathbb Q(x)$, and the normalized one of minimal order also has coefficients in $\mathbb Q(x)$. The set of $E$-functions is a subring of $\Qbar[[x]]$, whose units are of the form $\alpha e^{\beta x}$ for some $\alpha, \beta \in \Qbar$, $\alpha\neq 0$. Abusing a standard terminology for $G$-functions, and because no confusion will be possible here, we shall say that an $E$-function is {\em globally bounded} if there exists an integer $D$ such that $D^{n+1}a_n$ is an algebraic integer for all $n\ge 0$ ({\em i.e.}, the associated $G$-function $\sum_{n\ge 0} a_n x^n$ is globally bounded in the usual sense). Finally, when some function $F$ is solution of a linear differential equation with polynomial coefficents, we shall often say that $F$ is holonomic.

\medskip

 Siegel~\cite{siegel} defined and studied $E$-functions in 1929, and this in particular enabled him to prove the Bourget hypothesis, {\em i.e.}, that $J_n$ and $J_m$ share no  common zero $\xi\in\C\etoile$, where $m$ and $n$ are distinct non-negative integers. Here $J_\alpha$ is one of the Bessel functions defined by $J_\alpha(x):=\sum_{m=0}^{\infty}\frac{(-1)^m}{m!\Gamma(m+\alpha+1)}(\frac{x}2)^{2m+\alpha}$. As this formula shows,  $\Gamma(\alpha+1)x^{-\alpha}J_\alpha(x)$ is an $E$-function for $\alpha\in\Q\setminus(-\N\etoile)$. 
Zeros of Bessel functions play an important role in many areas of physics; they are a motivation for studying zeros of $E$-functions.

\medskip

During an online talk in 2021, Jossen \cite{jossen} stated the following conjecture on the zeros of $E$-functions. 
\begin{conj}[Jossen] \label{conj2intro}
\begin{enumerate}
 \item[$(i)$] If $f$ and $g$ are two $E$-functions such that $f/g$ is an entire function, then $f/g$ is an $E$-function.
 \item[$(ii)$] If two $E$-functions $f$ and $g$ share at least one common root, then there exists a non-unit $E$-function $h$ such that $f/h$ and $g/h$ are $E$-functions.
\end{enumerate}
\end{conj}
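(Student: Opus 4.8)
The statement is a conjecture, so one should not expect an unconditional proof of its full strength; I will instead describe a strategy that isolates a provable core and makes transparent the single step that forces a restriction to special cases. The natural framework is the structure theory of $E$-functions due to André and Beukers: every $E$-function satisfies a linear differential equation whose only finite singularity is $0$, and finitely many $E$-functions can be packaged into one first-order system $y'=Ay$ with $A\in M(\Qbar[x,1/x])$ whose solutions span a Picard--Vessiot module. I would also reduce $(ii)$ to $(i)$: if $f$ and $g$ share a root, it suffices to produce a non-unit $E$-function $h$ dividing both $f$ and $g$ as entire functions, for then $f/h$ and $g/h$ are entire quotients of $E$-functions and $(i)$ upgrades them to $E$-functions, while the shared root guarantees that $h$ is a non-unit. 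Thus everything rests on $(i)$ together with the construction of a common $E$-factor.

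For $(i)$, the first and easiest step is analytic. Both $f$ and $g$ have order $\le 1$ and finite exponential type, so Hadamard's factorization writes each as $e^{\beta x}x^{m}\prod_n(1-x/\rho_n)e^{x/\rho_n}$. Since $f/g$ is entire, every zero of $g$ is a zero of $f$ of at least the same multiplicity; cancelling these leaves $h:=f/g$ as a genus-$\le 1$ Hadamard product over the remaining zeros of $f$, times $e^{(\beta_f-\beta_g)x}x^{m_f-m_g}$. From this one reads off that $h$ is entire of order $\le 1$, and a comparison of the zero-counting functions of $f$ and $g$ yields finite exponential type. So $h$ already satisfies the growth condition demanded of an $E$-function, and what remains are the holonomy and the arithmetic conditions $(i)$--$(iii)$.

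The crux, and the step I expect to be the main obstacle, is to prove that $h$ is holonomic. This is \emph{not} formal: quotients of holonomic functions are generically non-holonomic, and in Picard--Vessiot terms the Galois orbit $\{\sigma f/\sigma g\}$ is a family of Möbius combinations of the $f_i$ and $g_j$ whose $\Qbar(x)$-span can be infinite-dimensional, so $h$ need not generate a finite-dimensional differential module. I would attack this by exploiting that $0$ is the only finite singularity of the joint system, analysing the local exponents of $h$ at $0$ together with its behaviour at $\infty$, and trying to force a finite-order operator from the finiteness and rationality of the indicial data. It is precisely here that I would expect to have to specialize: for exponential polynomials $\sum_i p_i(x)e^{\beta_i x}$ the quotient can be controlled through Ritt's factorization theorem and the divisibility theory underlying Shapiro's conjecture, giving a genuine proof in that case, whereas the general holonomy statement seems out of reach. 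Granting holonomy, the final step is arithmetic: one must bound the denominators and the sizes of the Galois conjugates of the Taylor coefficients of $h$. Writing $g\,h=f$ and inverting the holonomic recurrence satisfied by the coefficients of $g$ produces a recurrence for those of $h$; the delicate point is that the leading coefficient of this recurrence may vanish, so controlling the $p$-adic valuations and archimedean sizes appearing in the division requires the full arithmetic Gevrey estimates $(ii)$--$(iii)$ for $f$ and $g$. Constructing the common $E$-factor needed for $(ii)$ would proceed analogously, building $h$ as the Hadamard product over the common zeros and then verifying the same holonomy and arithmetic conditions.
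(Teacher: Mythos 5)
You are right to treat Conjecture~\ref{conj2intro} as open and to offer a strategy rather than a proof; the paper does not prove it either, so what can be compared is strategies, and yours has two concrete defects beyond the holonomy obstacle you honestly flag. First, your treatment of part $(ii)$ is circular: you propose to build the common factor $h$ as the Hadamard product over the common zeros of $f$ and $g$ and then to ``verify'' holonomy and the arithmetic conditions. A canonical product over a prescribed zero set has no reason to be holonomic or to have algebraic Taylor coefficients; the assertion that common zeros are carried by a common \emph{$E$-function} factor is precisely the content of part $(ii)$, so this construction assumes what is to be proved. Second, your claim that Ritt's factorization and Shapiro-type divisibility give ``a genuine proof'' in the exponential-polynomial case is accurate only for part $(i)$ (Ritt, Rahman, as cited in the introduction). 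For part $(ii)$, the statement that a \emph{single} common zero forces a common factor is not known unconditionally even for exponential polynomials over $\Qbar$: the paper obtains it (Theorem~\ref{th10}) only under Schanuel's conjecture, by showing that the ideal of algebraic relations among $\xi, e^{\alpha_1\xi},\dots,e^{\alpha_p\xi}$ is principal; and the example of $e^x-e$ and $e^{x\sqrt2}-e^{\sqrt2}$ shows that divisibility theory alone, without arithmetic input, cannot yield such a statement.

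On part $(i)$, your route (Hadamard factorization for growth, then holonomy, then arithmetic estimates via inverted recurrences) is also not the paper's. The paper never confronts holonomy of the quotient at all: its conditional proof (Corollary~\ref{cormultizeros}) derives both parts of Jossen's conjecture from a Ritt-type factorization conjecture for $\calE$ (Conjecture~\ref{conjfact}) together with Conjectures~\ref{conjirred} and~\ref{conjzerolog} on zeros. Once gcd's exist and coprime $E$-functions are known to share no zeros (Theorem~\ref{th1}), one divides $f$ and $g$ by $\gcd(f,g)$; since $f/g$ is entire, every zero of the denominator would be a common zero, so the denominator is zero-free, hence a unit $\lambda e^{\alpha x}$, and part $(i)$ follows by multiplying $f$ by $\lambda^{-1}e^{-\alpha x}$; part $(ii)$ is immediate from Theorem~\ref{th1}. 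In other words, the obstruction you identify as the crux is absorbed wholesale into the factorization conjecture rather than attacked. Finally, the paper's unconditional contribution to part $(i)$, Theorem~\ref{propquoLgg} (the case $f=L(g)$ with $L\in\Qbar(x)[d/dx]$), uses a method your sketch does not consider: asymptotic expansions of $E$-functions in large sectors, the indicator function and a Phragm\'en--Lindel\"of theorem from Boas to show $L(g)/g$ has polynomial growth, then Liouville's theorem. That, rather than inverting holonomic recurrences (where, as you note, vanishing leading coefficients make the arithmetic of division intractable), is currently the only provable core beyond exponential polynomials.
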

His conjecture is inspired by Shapiro's conjecture \cite{shapiro} (see below) and by Ritt-type division theorems \cite{rahman, ritt} (see also \cite{shields}). Part $(i)$ holds if $f$ and $g$ are exponential polynomials over $\Qbar$   \cite{rahman, ritt}. Using \cite[Proposition~4.1]{beukers}, part $(i)$ holds if $g\in\Qbar[X]$ and part $(ii)$ does if the common root is algebraic. As far as we know, these are the only known results on Conjecture~\ref{conj2intro}.

\medskip

Using asymptotic expansions and the indicator function of a holomorphic function, we are able to prove the following.

\begin{theo}\label{propquoLgg} Let $g$ be an entire function such that $g^m$ is an $E$-function for some $m\geq 1$. Let $L\in\Qbar(x)[d/dx]$ be a differential operator such that ${L(g)}/g$ is an entire function. Then ${L(g)}/g\in\Qbar[x]$. 
\end{theo}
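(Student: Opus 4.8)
The plan is to control the growth of $Q:=L(g)/g$ in every direction by means of the known asymptotic expansions of $E$-functions, promote the resulting ray-by-ray polynomial bounds to a global one by a Phragm\'en--Lindel\"of argument, and then invoke Liouville's theorem. \textbf{Growth and reduction.} Since $g^m$ is an $E$-function it is entire of exponential type; as extracting an $m$-th root does not change the order of growth, $g$ is entire of exponential type, and since $g^m\in\Qbar[[x]]$ one checks that $g\in\Qbar[[x]]$ as well. First I record that $Q$ is itself of exponential type: writing $Q=L(g)\cdot(1/g)$, the function $L(g)$ is entire of exponential type (the $g^{(j)}$ share the indicator of $g$, and the rational coefficients of $L$ contribute only polynomial factors), while $1/g$ is meromorphic of order $\le 1$; the Nevanlinna characteristic then satisfies $T(r,Q)\le T(r,L(g))+T(r,1/g)+O(1)=O(r)$, and since $Q$ is entire this forces $\log M(r,Q)=O(r)$. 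This a priori bound will feed the Phragm\'en--Lindel\"of step.

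\textbf{Asymptotics (the heart of the matter).} I would apply the asymptotic expansion theory for $E$-functions to $g^m$: there is a finite set of algebraic exponents $\omega_1,\dots,\omega_s$ together with finitely many associated \emph{anti-Stokes} directions, such that in any closed sector avoiding them $g^m(x)$ is asymptotic to a finite sum $\sum_i e^{\omega_i x}x^{\mu_i}\phi_i(x)$ with each $\phi_i$ a Gevrey series in $1/x$. In an \emph{open} non-Stokes sector one term strictly dominates, so $g^m$ and hence $g$ is nonvanishing there for $|x|$ large; the sector being simply connected, the $m$-th root is single-valued and $g(x)$ is asymptotic to $e^{(\omega/m)x}x^{\mu/m}\psi(x)$ for a single exponent $\omega$ and a series $\psi$ in $1/x$. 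Differentiating this expansion termwise gives $g^{(j)}(x)\sim(\omega/m)^j e^{(\omega/m)x}x^{\mu/m}\psi(x)(1+o(1))$, so that $L(g)(x)=\big(\sum_j p_j(x)(\omega/m)^j\big)g(x)(1+o(1))$. The exponential factor and the leading power therefore cancel in the quotient, and $Q=L(g)/g$ is asymptotic, in each non-Stokes direction, to the rational symbol $\sum_j p_j(x)(\omega/m)^j$ times a ratio of $1/x$-series; in particular $\log|Q(re^{i\theta})|=O(\log r)$, uniformly on each closed non-Stokes sector.

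\textbf{Globalization and conclusion.} There are only finitely many anti-Stokes directions, so I can partition the plane into finitely many closed sectors of opening $<\pi$ whose bounding rays are non-Stokes, inserting auxiliary non-Stokes rays whenever two consecutive anti-Stokes directions are too far apart. On each such sector $Q$ is of exponential type and is polynomially bounded on both boundary rays, so Phragm\'en--Lindel\"of gives a bound $|Q(x)|\le C|x|^N$ throughout the sector; combining the finitely many sectors, $Q$ is polynomially bounded on all of $\C$, hence a polynomial by Liouville's theorem. Its coefficients are algebraic: as $g\in\Qbar[[x]]$ and $L\in\Qbar(x)[d/dx]$, the Taylor coefficients of $Q=L(g)/g$ lie in $\Qbar$, so $Q\in\Qbar[x]$.

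\textbf{Expected obstacle.} The delicate step is the second paragraph, namely the honest proof that $Q$ is polynomially bounded in each direction. The subtle points are the possible vanishing of the leading coefficient of $\psi$ in a \emph{degenerate} direction, which forces one to descend to subdominant terms while verifying that the exponential rates of $L(g)$ and $g$ still coincide, so that their ratio remains a Laurent-type series in $1/x$; the legitimacy of differentiating Gevrey asymptotic expansions termwise; and the single-valued extraction of the $m$-th root together with the uniformity of all estimates as $\theta$ ranges over a closed non-Stokes sector. Once this ray-by-ray control is secured, the remaining steps are routine.
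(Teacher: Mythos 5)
Your proposal is correct and takes essentially the same approach as the paper: asymptotic expansions of $E$-functions along generic (non-anti-Stokes) rays give ray-by-ray polynomial bounds on $L(g)/g$ (including the degenerate case you flag, where $L$ annihilates the dominant exponential part, which the paper dispatches by noting that $L(g)$ is then exponentially smaller than $g$ so the quotient tends to $0$), followed by a Phragm\'en--Lindel\"of-type globalization, Liouville's theorem, and algebraicity of the Taylor coefficients. The only differences are cosmetic: the paper rotates so that $\theta\in\{0,\pi/2,\pi\}$ are generic and applies Boas's half-plane theorem (Proposition~\ref{theoboas}) to $(x\pm i)^{-A}L(g)/g$ rather than covering $\C$ by finitely many sectors of opening $<\pi$ as you do, and it leaves implicit the fact, which you justify via Nevanlinna theory, that the entire quotient $L(g)/g$ is of exponential type.
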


\begin{coro} 
Part $(i)$ of Jossen's conjecture holds true if $f$ is of the form $L(g)$ for some differential operator $L\in \Qbar(x)[d/dx]$, and in this case $L(g)/g$ is a polynomial.
\end{coro}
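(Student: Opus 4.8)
The plan is to derive this statement directly from Theorem~\ref{propquoLgg} by specializing to $m=1$. In the situation of part~$(i)$ of Conjecture~\ref{conj2intro}, both $f$ and $g$ are $E$-functions, with the extra assumption $f=L(g)$ and with $f/g=L(g)/g$ assumed to be entire. Recall that any $E$-function $\sum_{n\ge 0}a_nx^n/n!$ is entire: the bounds $\vert a_n\vert\le C_1^{n+1}$ from condition $(ii)$ give $\vert a_n/n!\vert\to 0$ faster than geometrically, so the series converges on all of $\C$ (in fact $g$ has order $\le 1$). In particular $g$ is an entire function, and taking $m=1$ the function $g^m=g$ is an $E$-function. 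Hence all the hypotheses of Theorem~\ref{propquoLgg} are satisfied.

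First I would apply Theorem~\ref{propquoLgg} with this choice of $g$, of $L$ and with $m=1$. Since $L(g)/g$ is entire by assumption, the theorem yields $L(g)/g\in\Qbar[x]$. This already proves the last assertion of the corollary, namely that in this case $L(g)/g$ is a polynomial.

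It then remains to observe that a polynomial $P\in\Qbar[x]$ is itself an $E$-function, so that $f/g=L(g)/g$ is an $E$-function and part~$(i)$ of Conjecture~\ref{conj2intro} holds in this case. Writing $P=\sum_{n=0}^{N}b_nx^n$ with $b_n\in\Qbar$, set $a_n=n!\,b_n$ for $n\le N$ and $a_n=0$ for $n>N$, so that $P=\sum_{n\ge 0}a_nx^n/n!$. Only finitely many $a_n$ are nonzero and they are algebraic, so conditions $(ii)$ and $(iii)$ hold for suitable constants $C_1,C_2$ and integers $d_n$, while condition $(i)$ holds because $P$ is annihilated by $(d/dx)^{N+1}$. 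Thus $P$ is an $E$-function, which completes the proof.

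Since the entire analytic content is carried by Theorem~\ref{propquoLgg}, there is no real obstacle at the level of the corollary: the only points to verify are the two standard facts that $E$-functions are entire and that polynomials over $\Qbar$ are $E$-functions, both immediate from the definition. The one thing to be careful about is that the reduction genuinely uses $m=1$ together with the hypothesis that $f=L(g)$ be an $E$-function, which is part of the assumptions of Conjecture~\ref{conj2intro}$(i)$; without assuming $f$ to be an $E$-function, $L(g)$ need not be one, since $L$ may have rational-function coefficients with poles.
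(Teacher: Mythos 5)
Your proof is correct and is exactly the intended argument: the paper states the corollary without a separate proof precisely because it follows from Theorem~\ref{propquoLgg} with $m=1$ (noting $E$-functions are entire and $\Qbar[x]\subset\calE$), which is what you do. One minor remark: your closing caution is unnecessary, since Theorem~\ref{propquoLgg} never requires $L(g)$ to be an $E$-function — only that $g$ be entire with $g^m$ an $E$-function and $L(g)/g$ entire — so the hypothesis that $f$ is an $E$-function plays no role in the deduction beyond being part of the setting of Conjecture~\ref{conj2intro}$(i)$.
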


The present paper is devoted to a better understanding of zeros of $E$-functions, at least from a conjectural point of view. We conjecture that multiple zeros of an $E$-function $f$ always occur for a trivial reason: the zeros of multiplicity $j$ are exactly the zeros of an $E$-function $g_j$ such that $g_j^j$ divides $f$. A precise statement is the following.

\begin{conj} \label{conj1intro} Any    non-zero $E$-function $f$ can be written as 
$\prod_{j\in J} g_j(x)^j$ where  $J \subset \N\etoile$ is finite, and for each $j\in J$, $g_j$ is an $E$-function with zeros of multiplicity~$1$ such that $g_j$ and $g_{j'}$ have no common zero for $j\neq j'$. This representation is unique up to multiplication of the $g_j$'s by units. 
\end{conj}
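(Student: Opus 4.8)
The plan is to imitate the squarefree factorization of a polynomial, replacing $\gcd$ by an entire ``greatest common divisor'' and reducing the construction of the factors $g_j$ to Jossen's Conjecture~\ref{conj2intro}. First I would record that the index set $J$ is finite. Indeed $f$ is holonomic: let $p_N(x)f^{(N)}+\cdots+p_0(x)f=0$ be a minimal linear differential equation with $p_i\in\Qbar[x]$. At any point $x_0$ with $p_N(x_0)\neq0$ the function $f$ cannot vanish to order $\ge N$, for otherwise $f(x_0)=\cdots=f^{(N-1)}(x_0)=0$ and uniqueness at an ordinary point would force $f\equiv0$. Since $f$ is entire, its order of vanishing at each of the finitely many zeros of $p_N$ is finite, so the multiplicities of the zeros of $f$ are bounded and $J$ is finite.

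For existence, set $D_0:=f$ and, for $k\ge1$, let $D_k$ be an entire function with $\ord_\alpha D_k=\max(\ord_\alpha f-k,0)$ at every $\alpha\in\C$; such a $D_k$ exists and is unique up to a nonvanishing entire factor, and plays the role of $\gcd(D_{k-1},D_{k-1}')$. Writing $m_\alpha:=\ord_\alpha f$ and setting
\[
g_j:=D_{j-1}D_{j+1}/D_j^2,
\]
a multiplicity count gives
\[
\ord_\alpha g_j=\max(m_\alpha-j+1,0)+\max(m_\alpha-j-1,0)-2\max(m_\alpha-j,0),
\]
which equals $1$ if $m_\alpha=j$ and $0$ otherwise. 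Hence $g_j$ has simple zeros exactly at the zeros of $f$ of multiplicity $j$, the $g_j$ pairwise share no zero, and $\prod_{j\in J}g_j^{\,j}$ has precisely the zeros of $f$ with the right multiplicities. It therefore remains to show that each $D_k$, and then each quotient $g_j$, is an $E$-function.

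Uniqueness is the soft part and needs no arithmetic input. If $\prod_{j}g_j^{\,j}=\prod_{j}\tilde g_j^{\,j}$ are two representations as in the statement, then for each $j$ the functions $g_j$ and $\tilde g_j$ have the same zeros, all simple, so $g_j/\tilde g_j$ is entire and nowhere vanishing. As $E$-functions are entire of order at most $1$, Hadamard's theorem writes $g_j$ and $\tilde g_j$ as the same canonical product times the exponential of a polynomial of degree $\le1$; hence $g_j/\tilde g_j=\alpha e^{\beta x}$ with $\alpha\in\Qbar\etoile$ and $\beta\in\Qbar$ (algebraicity follows by evaluating at a regular point), i.e. a unit. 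The same argument applied to $\prod_j g_j^{\,j}$ and $f$, which have identical zeros, lets one absorb a unit so that $\prod_j g_j^{\,j}=f$ exactly.

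The main obstacle is the existence part, i.e. proving that the entire gcd's $D_k$ — equivalently the simple-zero factors $g_j$ — are $E$-functions, and this is essentially the content of Conjecture~\ref{conj2intro}: passing from $f$ to $D_1=\gcd(f,f')$ extracts a common factor of the two $E$-functions $f$ and $f'$ (recall $f'$ is again an $E$-function), which is part $(ii)$, while recognizing an entire quotient such as $g_j=D_{j-1}D_{j+1}/D_j^2$ as an $E$-function is part $(i)$. Even the simplest case, in which all zeros of $f$ have the same multiplicity $m$ and the claim reduces to showing that an entire $g$ with $g^m$ an $E$-function is itself an $E$-function, appears out of reach: Theorem~\ref{propquoLgg} controls only its zero-free shadow, forcing a nonvanishing such $g$ to be a unit $\alpha e^{\beta x}$, whereas the presence of zeros is precisely where both holonomy and the arithmetic growth conditions $(ii)$ and $(iii)$ are hard to establish. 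I expect this to be the crux, which is why the statement is offered as a conjecture rather than a theorem.
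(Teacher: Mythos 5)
You should first be clear about the status of this statement: it is Conjecture~\ref{conj1intro}, and the paper does not prove it unconditionally either, so no complete blind proof was possible. Within that constraint, several pieces of your analysis are sound and match what the paper says. Your finiteness argument for $J$ is exactly the remark the authors attribute to Jossen (multiplicity of a zero at an ordinary point is less than the order of the differential equation, and the finitely many singular points contribute finite multiplicities since $f$ is entire and non-zero). Your multiplicity bookkeeping for $g_j=D_{j-1}D_{j+1}/D_j^2$ is correct, and your uniqueness argument (same zeros, Hadamard factorization, hence the quotient is $\lambda e^{\mu x}$ with $\lambda,\mu$ algebraic by comparing Taylor coefficients) is fine; note the paper's uniqueness claim is handled the same way, since units of $\calE$ are exactly the non-vanishing $E$-functions. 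You also correctly identify that the whole difficulty is arithmetic: the entire functions $D_k$, $g_j$ exist by Weierstrass/Hadamard, and the open problem is to show they are $E$-functions.

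The one concrete flaw is your claim that extracting $D_1=\gcd(f,f')$ ``is part $(ii)$'' of Conjecture~\ref{conj2intro}. Part $(ii)$ only produces \emph{some} non-unit common factor $h$ of $f$ and $f'$, with no control on which zeros $h$ carries; to reach the full common-zero divisor $D_1$ you would have to iterate, and the iteration need not terminate, since $f$ may have infinitely many multiple zeros (e.g.\ $f(x)=\sin^2 x$, where each extraction could remove finitely many zeros). So even granting Jossen's conjecture in full, your scheme does not obviously close, and it is not clear that Conjecture~\ref{conj2intro} alone implies Conjecture~\ref{conj1intro}. The paper's conditional route is different: in Corollary~\ref{cormultizeros} it deduces Conjecture~\ref{conj1intro} from the Ritt-type factorization Conjecture~\ref{conjfact} together with Conjectures~\ref{conjirred} and~\ref{conjzerolog} --- irreducible factors have only simple zeros (by the argument of Corollary~\ref{cormultizeros2}, via Theorem~\ref{th1}), and simple factors reduce, after factoring a polynomial in $\Qbar[X]$, to functions $u(x)(e^{\beta x}-y_0)$ which visibly have simple zeros and pairwise disjoint zero sets when their supports differ. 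Finally, you undersell Theorem~\ref{propquoLgg}: the ``simplest case'' you call out of reach (all zeros of multiplicity $m$, $f=g^m$) is exactly what Theorem~\ref{theo:1intro} proves under additional hypotheses (globally bounded, rational coefficients, differential order $\le m+1$), where Theorem~\ref{propquoLgg} is used not merely on the zero-free part but to produce a second-order equation $c_{m+1,m}a_{m+1}g''+c_{m,m}a_m g'-hg=0$ establishing that $g$ is holonomic, after which the growth and denominator estimates for its Taylor coefficients follow.
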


If $f$ is not a unit, the set $J$ can be chosen as the set of multiplicities of zeros of $f$. 
As Jossen pointed out to us, it is then finite since except for singularities, 
all zeros of $f$ have multiplicity less than the order of a linear differential equation it satisfies.

In direction of this conjecture, we shall prove the following result using Theorem~\ref{propquoLgg} and the Hadamard factorization theorem. 

\begin{theo} \label{theo:1intro}
Let $f\in \mathbb Q[[x]]$ be a non-zero globally bounded $E$-function with zeros each of the same multiplicity $m\ge 2$. Assume that $f$ is solution of a non-trivial differential operator in $\mathbb Q(x)[d/dx]$ of order $\le m+1$.

Then there exists a non-zero globally bounded $E$-function $g$ with zeros of multiplicity~1 such that $f=g^m$, and $g(x)\in \alpha \mathbb Q[[x]]$ where $\alpha^m\in \Q^*$. Moreover $g$ is solution of a non-trivial differential~ope\-ra\-tor in $\mathbb Q(x)[d/dx]$ of order $\le 2$.
\end{theo}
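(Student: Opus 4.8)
The plan is to construct $g$ as an $m$-th root of the Hadamard factorisation of $f$, to pin down a second-order operator for $g$ by a local computation at its zeros combined with Theorem~\ref{propquoLgg}, and finally to deduce the $E$-function property of $g$ from that of $f=g^{m}$. Recall first that an $E$-function satisfies $|f(z)|\le C_{1}e^{C_{1}|z|}$, so it is entire of order $\le 1$ and finite exponential type. Since every zero of $f$ has the same multiplicity $m$, the Hadamard factorisation theorem gives $f=c\,x^{m\delta}e^{P(x)}\prod_{k}E_{1}(x/\xi_{k})^{m}$ with $\delta\in\{0,1\}$, $\deg P\le 1$, $E_{1}(u)=(1-u)e^{u}$, and $\xi_{k}$ the nonzero zeros of $f$; taking one factor out of each primary block defines an entire function $g=\alpha\,x^{\delta}e^{P(x)/m}\prod_{k}E_{1}(x/\xi_{k})$ with simple zeros and $g^{m}=f$. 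Because $f\in\mathbb Q[[x]]$, dividing by its lowest nonzero term and extracting the power-series $m$-th root (whose binomial coefficients are rational) shows $g\in\alpha\,\mathbb Q[[x]]$ with $\alpha^{m}$ equal to that lowest coefficient, hence $\alpha^{m}\in\mathbb Q^{*}$.

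Next I would fix the order of the equation. If $f$ has only finitely many zeros, then $g$ is a polynomial times an exponential, $g'/g\in\mathbb Q(x)$ lies in $\mathbb Q(x)$ (a rational element of $\mathbb Q((x))$), so $g$ satisfies a first-order equation over $\mathbb Q(x)$ and the remaining claims are immediate; I therefore assume $f$ has infinitely many zeros. All but finitely many zeros $\xi$ of $f$ are ordinary points of any operator annihilating it, and at such a point $f$ vanishes to order $m$; if the minimal operator had order $n\le m$, then $f$ and all its derivatives would vanish at $\xi$, forcing $f\equiv0$. Hence the minimal operator has order exactly $m+1$, and since the hypothesis caps the order at $m+1$, I may normalise the given operator as $M=d^{m+1}/dx^{m+1}+q_{m}\,d^{m}/dx^{m}+\cdots+q_{0}$ with $q_{j}\in\mathbb Q(x)$.

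The heart of the argument is a local identity at the zeros. At an ordinary point $\xi$ of $M$ that is a simple zero of $g$, writing $g=c_{1}(x-\xi)+c_{2}(x-\xi)^{2}+\cdots$ gives $g''(\xi)/g'(\xi)=2c_{2}/c_{1}$, while $f=g^{m}$ yields $f^{(m)}(\xi)=m!\,c_{1}^{m}$ and $f^{(m+1)}(\xi)=(m+1)!\,m\,c_{1}^{m-1}c_{2}$, so that
\[
\frac{g''(\xi)}{g'(\xi)}=\frac{2}{m(m+1)}\,\frac{f^{(m+1)}(\xi)}{f^{(m)}(\xi)}.
\]
Since $f^{(j)}(\xi)=0$ for $j<m$, the relation $M(f)=0$ collapses at $\xi$ to $f^{(m+1)}(\xi)=-q_{m}(\xi)\,f^{(m)}(\xi)$, whence $g''(\xi)+a(\xi)\,g'(\xi)=0$ for $a:=\tfrac{2}{m(m+1)}\,q_{m}\in\mathbb Q(x)$. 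Thus the operator $L=d^{2}/dx^{2}+a\,d/dx$ sends $g$ to a function vanishing at every ordinary-point zero of $g$, so $L(g)/g$ is meromorphic with poles only among the finitely many singular points of $M$. Multiplying by a polynomial $\Pi\in\mathbb Q[x]$ that clears the denominator of $a$ and vanishes to high order at those finitely many points produces $\tilde L=\Pi\,d^{2}/dx^{2}+\Pi a\,d/dx\in\mathbb Q[x][d/dx]$ of order $2$ with $\tilde L(g)/g$ entire; Theorem~\ref{propquoLgg} then forces $\tilde L(g)/g\in\Qbar[x]$, and comparison with $g\in\alpha\,\mathbb Q[[x]]$ shows this polynomial lies in $\mathbb Q[x]$. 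Hence $g$ is a solution of a non-trivial order-$2$ operator over $\mathbb Q(x)$.

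It remains to verify that $g$ is a globally bounded $E$-function. Holonomy is the previous step; $g=f^{1/m}$ is entire of order $\le 1$ and finite type because $f$ is, which yields the archimedean bound $|a_{n}|\le C_{1}^{n+1}$ on its coefficients; and extracting the $m$-th root of the globally bounded series $f/\alpha^{m}\in\mathbb Q[[x]]$ recursively shows that the denominators of $g/\alpha$ remain bounded (inverting $m$ only introduces powers of $m$), so $g/\alpha$ is a globally bounded $E$-function and $g=\alpha\cdot(g/\alpha)$ is one as well. I expect the main obstacle to be the third paragraph: beyond establishing the exact order $m+1$ and the local identity, the delicate point is guaranteeing that $\tilde L(g)/g$ is \emph{genuinely entire}, which requires controlling the finitely many zeros of $g$ that may coincide with singular points of $M$ (if necessary by multiplying $\tilde L$ on the left by a further polynomial vanishing there); this bookkeeping, rather than the application of Theorem~\ref{propquoLgg} itself, is where the care is needed.
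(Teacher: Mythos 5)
Your proposal is correct, and its skeleton is the paper's: build the entire $m$-th root $g$ by Hadamard factorization, feed an entire quotient $L(g)/g$ into Theorem~\ref{propquoLgg} to get a second-order equation over $\Q(x)$, control denominators via the binomial series, and treat the finitely-many-zeros case separately. The genuine difference is how the entire quotient is produced. The paper works globally and algebraically: the Leibniz-formula identity $(g^m)^{(k)}=gP_{k,m}(g,\ldots,g^{(k)})+c_{k,m}\,g^{(k-m+1)}(g')^{m-1}$ for $k\in\{m,m+1\}$ turns the polynomial-coefficient equation $\sum_k a_kf^{(k)}=0$ into $\bigl(c_{m+1,m}a_{m+1}g''+c_{m,m}a_mg'\bigr)(g')^{m-1}=gQ$, and since $g$, $g'$ have no common zero and all coefficients are polynomials, the quotient is entire with \emph{no} singular-point bookkeeping at all. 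You instead work locally: jets at ordinary-point zeros give $g''(\xi)+\tfrac{2}{m(m+1)}q_m(\xi)g'(\xi)=0$, and you must then multiply by an auxiliary polynomial $\Pi$ vanishing to high order at the finitely many singular points before Theorem~\ref{propquoLgg} applies — exactly the bookkeeping you flag at the end; it does work, but it is what the paper's identity is designed to avoid, and it also forces you to first prove the minimal order is exactly $m+1$ so the operator can be normalized monic (a nice consistency check: since $c_{m,m}/c_{m+1,m}=\tfrac{2}{m(m+1)}$, both derivations yield literally the same second-order operator). Two smaller divergences, both to your credit: your archimedean bound uses finite exponential type of $g=f^{1/m}$ plus Cauchy estimates, which is more direct than the paper's route through the order formula of Boas and Perron's theorem \cite{perron}; and in the finitely-many-zeros case you get $\beta\in\Q$ and rational polynomial part by Galois descent on the rational Laurent series $g'/g$, where the paper invokes the canonical decomposition theorem of \cite{brs}.

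One point is stated too quickly: the claim that extracting the $m$-th root ``recursively shows that the denominators remain bounded (inverting $m$ only introduces powers of $m$).'' The naive induction does not close: if $v^m=w$ with $D^nw_n\in\Z$ and you posit $(m^2D)^nv_n\in\Z$, the recursion $mv_n=w_n-\sum v_{i_1}\cdots v_{i_m}$ returns $(m^2D)^nv_n\in\tfrac1m\Z$, which is weaker. You must either use the explicit binomial expansion together with the integrality $m^{2k}\binom{1/m}{k}\in\Z$ (the paper's route, quoting \cite{drr}), or strengthen the induction hypothesis to $m^{2n-1}D^nv_n\in\Z$, which does propagate because every term $v_{i_1}\cdots v_{i_m}$ in the recursion has at least two indices $i_j\geq1$. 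This is a repairable sketchiness, not a flaw in the approach.
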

The function $g$ is unique up to the multiplication of $\alpha$ by any $m$-th root of unity. The proof of Theorem~\ref{theo:1intro} shows more precisely that, under the same assumptions, if $f$ is of minimal (differential) order $m+1$ then $g$ is exactly of order $2$, while if $f$ is of order $\le m$ then both $g$ and $f$ are of order $1$, {\em i.e.} both of the form $p(x)e^{\xi x}$ with $\xi \in \mathbb Q$ and $p\in \mathbb Q[x]$. 

\bigskip

In the special case where the $E$-functions under consideration are exponential polynomials over $\Qbar$, we are going to prove that Conjectures \ref{conj2intro} and \ref{conj1intro}, and even much more precise statements, follow from Schanuel's conjecture. To stay in the framework of $E$-functions (and because it changes completely the situation), all exponential polynomials considered in the sequel will be over $\Qbar$, namely functions 
\begin{equation}
 \label{eqpolexpintro}f(x)=\sum_{i=1}^N P_i(x)e^{\beta_i x}
\end{equation} 
with $N\geq 1$, $P_1,\ldots,P_N\in\Qbar[X]$, $\beta_1,\ldots,\beta_N\in\Qbar$. We point out that we allow polynomial coefficients $P_i(x)$, instead of only constants in \cite{ritt} for instance. However, we {\em always} restrict ourselves to algebraic $\beta_i$ and polynomials $P_i$ with algebraic coefficients, instead of $\beta_i\in\C$ and $P_i\in\C[X]$ in the literature on exponential polynomials.

Units of the ring $\calP$ of exponential polynomials over $\Qbar$ are functions of the form $\lambda e^{\alpha x}$ with $\lambda,\alpha\in\Qbar$ and $\lambda\neq 0$. Irreducible elements are those non-units that cannot be written as a product of two non-units. Following Ritt, simple elements are those of the form $ e^{\alpha x} \sum_{i=1}^N \lambda_i e^{\beta r_i x}$
with $\alpha,\beta\in\Qbar$, $\beta\neq0$, $N\geq 2$, $\lambda_1,\ldots,\lambda_N\in\Qbar\etoile$, and pairwise distinct rational numbers $r_1,\ldots,r_N$. 
The {\em support} of such a simple function is the $1$-dimensional $\Q$-vector space spanned by $\beta$.

With these notations, any $f\in\calP$ can be written in a unique way (up to units) as a product of irreducible exponential polynomials  over $\Qbar$, and simple ones with pairwise distinct supports. This result is due to Ritt \cite{ritt1927} when the coefficients $P_i$ in Eq. \eqref{eqpolexpintro} are constant, and to MacColl \cite{maccoll} in the general setting (see also \cite{evdp}). This factorisation result enables one to define gcd's in the ring of exponential polynomials  over $\Qbar$ (see \cite[Theorem~3.1.18]{bg}). 

These results were first proved over $\C$, not $\Qbar$. Many papers are devoted to the study of common zeros of two exponential polynomials with complex coefficients (namely, $\beta_i\in\C$ and $P_i\in\C[X]$ in Eq. \eqref{eqpolexpintro}), for instance towards Shapiro's conjecture \cite{shapiro} (see also \cite{amt}): if two exponential polynomials (with constant coefficients, over $\C$) have infinitely many common zeros, then both are divisible by an exponential polynomial with infinitely many zeros. In this conjecture, one common zero is not enough: for instance, $e^x-e$ and $e^{x\sqrt2} -e^{\sqrt2} $ have a common zero at $x=1$, but no non-unit common factor since the gcd of these exponential polynomials is equal to 1. On the contrary, part $(ii)$ of Jossen's Conjecture~\ref{conj2intro} suggests that restricting to exponential polynomials over $\Qbar$ (which are $E$-functions) prevents this kind of behaviour: any common zero should be explained by a common factor. We prove this in Theorem~\ref{th10} below, assuming the following widely-believed very powerful conjecture, due to Schanuel \cite{lang}. 
\begin{conj}[Schanuel]
 \label{conjschanuelintro} Let $x_1,\ldots,x_n$ be complex numbers, linearly independent over $\Q$. Then $\Qbar(x_1,\ldots,x_n,e^{x_1}, \ldots, e^{x_n})$ has transcendence degree at least $n$ over $\Qbar$.
\end{conj}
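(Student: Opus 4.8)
This is Schanuel's conjecture, for which no proof is known; the most I can honestly offer is to isolate the one accessible regime and to explain where every known approach stalls. The natural first step is to split according to how many of the $x_i$ are algebraic. If $x_1,\dots,x_n\in\Qbar$, these coordinates contribute nothing to the transcendence degree over $\Qbar$, so the assertion collapses to the algebraic independence over $\Qbar$ of $e^{x_1},\dots,e^{x_n}$ --- precisely the Lindemann--Weierstrass theorem. This is the single case in which the conjecture is a theorem, and, fittingly for the present paper, it is proved by the $E$-function machinery: the functions $e^{x_1 t},\dots,e^{x_n t}$ are algebraically independent $E$-functions over $\Qbar(t)$ when $x_1,\dots,x_n\in\Qbar$ are $\Q$-linearly independent, and Shidlovskii's theorem transfers this independence to their values at $t=1$.

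For general complex $x_i$ the only serious strategy I know is to try to transport to the number field the differential-algebraic proof of the formal analogue due to Ax. There one works in a differential field with derivation $D$ and field of constants $C$, sets $y_i=\exp(x_i)$ so that $Dy_i=y_i\,Dx_i$, and shows that $\Q$-linear independence of the $x_i$ modulo $C$ forces a lower bound of Schanuel's shape on $\trdeg_C C(x_1,\dots,x_n,y_1,\dots,y_n)$. One would then hope to realize the given complex numbers as a specialization of such a differential configuration and to descend Ax's conclusion to the specialized point.

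The hard part --- and the reason the statement is recorded as a conjecture rather than proved --- is exactly this specialization. In Ax's argument the derivation $D$ is the entire engine: $\Q$-linear independence is \emph{detected} differentially, through the logarithmic derivatives $Dy_i/y_i=Dx_i$. But the $x_i$ here are fixed complex constants, so any compatible $D$ annihilates them, the differential input vanishes identically, and the argument yields nothing. Closing this gap is the central unsolved problem of transcendence theory. Beyond Lindemann--Weierstrass the only unconditional inputs are partial: the Gel'fond--Schneider method and its descendants give the transcendence of individual quantities such as $\alpha^{\beta}$, Baker's theory yields the $\Qbar$-linear (but not algebraic) independence of logarithms of algebraic numbers, and Nesterenko's theorem settles a few isolated special cases; none of these produces algebraic independence of the generality required here from the bare hypothesis that the $x_i$ are $\Q$-linearly independent. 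I therefore do not expect to complete the argument, and in the sequel the statement is invoked as a hypothesis rather than established.
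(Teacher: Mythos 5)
The paper offers no proof of this statement: it is Schanuel's conjecture itself, invoked throughout as an unproved hypothesis (with a citation to Lang), and you correctly recognize this and treat it the same way. Your survey of the accessible regime is accurate --- the case $x_1,\ldots,x_n\in\Qbar$ is exactly Lindemann--Weierstrass (recoverable via the Siegel--Shidlovskii machinery for the $E$-functions $e^{x_it}$), Ax's theorem is the correct differential-field analogue, and the obstruction you name (any derivation compatible with fixed complex constants annihilates them, so the differential detection of $\Q$-linear independence degenerates) is precisely why no specialization argument is known.
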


\begin{theo}
 \label{th10} Assume Schanuel's conjecture holds. Let $f_1$, $f_2$ be non-zero exponential polynomials over $\Qbar$. Then $\frac{f_1}{\gcd(f_1,f_2)}$ and $\frac{f_2}{\gcd(f_1,f_2)}$ have no common zero in $\C\etoile$. 
 
 In other words, common zeros of $f_1$ and $f_2$ in $\C\etoile$ are exactly the zeros of $\gcd(f_1,f_2)$, and for any such $\xi$, the order of vanishing of $\gcd(f_1,f_2) $ at $\xi$ is the least of the orders of vanishing of $f_1$ and $f_2$ at $\xi$. \end{theo}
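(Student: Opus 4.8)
The plan is to argue by contraposition through the Ritt--MacColl factorization of $\calP$, reducing everything to a statement about coprime pairs, and then to use Schanuel's conjecture to convert a hypothetical common zero into a forbidden common factor.

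First I would set $h=\gcd(f_1,f_2)$ and $g_i=f_i/h$; by the unique factorization in $\calP$ these are exponential polynomials over $\Qbar$ with $\gcd(g_1,g_2)=1$. Since $f_i=h g_i$ and orders of vanishing add, both final assertions (common zeros of $f_1,f_2$ are exactly those of $h$, with the minimal multiplicity) follow at once once one knows that \emph{two coprime exponential polynomials over $\Qbar$ have no common zero in $\C\etoile$}: at such a $\xi$ one would have $\min(\ord_\xi g_1,\ord_\xi g_2)=0$, whence $\ord_\xi h=\min(\ord_\xi f_1,\ord_\xi f_2)$. So I would assume for contradiction that coprime $g_1,g_2$ both vanish at some $\xi\in\C\etoile$.

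Next I would linearize the exponents: let $b_1,\dots,b_d\in\Qbar$ be a $\Z$-basis of the lattice generated by all exponents of $g_1,g_2$ (the case $d=0$, where the $g_i$ are coprime polynomials, being trivial). Writing each exponent as an integer combination of the $b_k$ produces polynomials $G_i\in\Qbar[X,Y_1,\dots,Y_d]$, nonzero and with distinct $Y$-monomials, such that $g_i(x)$ is, up to a unit, the value of $G_i$ at $X=x$, $Y_k=e^{b_kx}$. Setting $y_k=e^{b_k\xi}$ and $p=(\xi,y_1,\dots,y_d)$, the hypothesis gives $G_1(p)=G_2(p)=0$. Now I would apply Schanuel's Conjecture~\ref{conjschanuelintro} to the $\Q$-linearly independent numbers $b_1\xi,\dots,b_d\xi$ (independent because $\xi\neq0$): it yields $\trdeg_\Qbar\Qbar(b_1\xi,\dots,b_d\xi,y_1,\dots,y_d)\geq d$, and since the $b_k$ are algebraic this field equals $\Qbar(p)$, so $\trdeg_\Qbar\Qbar(p)\geq d$. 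Consequently the prime ideal $\mathfrak p$ of polynomials vanishing at $p$ has height $\leq 1$ in $\Qbar[X,Y_1,\dots,Y_d]$; it is nonzero as it contains $G_1$, hence has height exactly $1$ and, the ring being a UFD, is principal, $\mathfrak p=(R)$ with $R$ irreducible.

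Finally I would translate the factor back. Both $G_i$ lie in $(R)$, so $R$ divides each; applying the evaluation homomorphism $\phi\colon F\mapsto F(x,e^{b_1x},\dots,e^{b_dx})$, whose target $\calP$ is a domain, shows that the image $\phi(R)$ divides each $g_i$. It remains to check $\phi(R)$ is a non-unit: since all coordinates of $p$ are nonzero, $R$ is not a monomial, and distinct $Y$-monomials are sent to distinct exponentials $e^{(\nu\cdot b)x}$, so $\phi(R)$ could be a unit $\lambda e^{\alpha x}$ only if $R$ were a single monomial with constant coefficient, which is excluded. Thus $\phi(R)$ is a common non-unit factor of $g_1$ and $g_2$, contradicting $\gcd(g_1,g_2)=1$. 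I expect the main obstacle to be exactly this conversion: Schanuel supplies only a transcendence-degree inequality, and the real work is to package it as a height-one (hence principal) prime and then to verify that its generator survives the substitution $Y_k\mapsto e^{b_kx}$ as a genuine non-unit exponential polynomial, together with the unit/monomial bookkeeping and the reliance on Ritt--MacColl gcd theory in the reduction.
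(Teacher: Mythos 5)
Your proposal is correct and follows essentially the same route as the paper: the paper proves an intermediate result (Theorem~\ref{th3}: any common zero $\xi\in\C\etoile$ of two exponential polynomials yields a common factor vanishing at $\xi$) by the very same steps you use --- choosing a $\Z$-basis of the exponent lattice, clearing negative exponents by a unit, applying Schanuel to $\alpha_1\xi,\dots,\alpha_p\xi$ to get transcendence degree $\geq p$, concluding that the vanishing ideal at $(\xi,e^{\alpha_1\xi},\dots,e^{\alpha_p\xi})$ is a codimension-one (hence principal) prime of the polynomial ring, and pulling back its generator --- and then deduces Theorem~\ref{th10} exactly as you do, via coprimality of $f_i/\gcd(f_1,f_2)$ and the fact that units have no zeros. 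The only cosmetic differences are your contrapositive packaging and your monomial argument that $\phi(R)$ is a non-unit, where the paper simply observes that the pulled-back factor vanishes at $\xi$ and so cannot be a unit.
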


We point out that $0$ may be a common zero of $f_1$ and $f_2$ even in the case they are coprime, for instance if $f_1(x)=e^x-1$ and $f_2(x)=e^{x\sqrt2}-1$ (see the remark after Theorem~\ref{th3} in \S \ref{secpolexp}). 

Theorem~\ref{th10} enables us to define a kind of analogue of the minimal polynomial of an algebraic number for zeros of exponential polynomials over $\Qbar$. Let $\xi\in\C\etoile$ be a zero of  some $f\in\calP\setminus\{0\}$. Using Ritt's theorem, $\xi$ is a zero of either a simple or an irreducible exponential polynomial  over $\Qbar$. The former case means that $e^{\beta \xi}$ is algebraic for some $\beta\in\Qbar\etoile$; these numbers $\xi$ include $\pi$, logarithms of algebraic numbers, etc; they are periods. As noted by André \cite[\S\S 2.1 and 2.3]{andreGalois} (see also \cite[\S 3.3]{fresanx}), it is not clear how to define an analogue of the minimal polynomial for these numbers, in particular $\pi$. In this direction we prove the following.

\begin{theo}\label{theopi}
 Assume Schanuel's conjecture holds. Let $f$ be an exponential polynomial over $\Qbar$ such that $f(\pi)=0$. Then there exists $N\geq 1$ such that 
 $$\frac{f(x)}{e^{2i x/N}-e^{2i \pi/N}}$$
 is an exponential polynomial over $\Qbar$.
\end{theo}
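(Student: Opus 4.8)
The plan is to produce, for a well-chosen $N$, the simple function $e^{2ix/N}-e^{2i\pi/N}$ as an explicit divisor of $f$ in the ring $\calP$, the only non-elementary ingredient being Theorem~\ref{th10}. The starting observation is that the fixed simple exponential polynomial $h(x):=e^{ix}+1$ over $\Qbar$ vanishes at $\pi$, since $e^{i\pi}=-1$; its support is $\Q i$. Hence $f$ and $h$ share the common zero $\pi\in\C\etoile$.

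First I would apply Theorem~\ref{th10} to the pair $(f,h)$. Since $\pi\in\C\etoile$ is a common zero, the function $S:=\gcd(f,h)$ must vanish at $\pi$; in particular $S$ is not a unit, as units $\lambda e^{\alpha x}$ never vanish. By construction $S$ divides $h$, and a divisor of a simple function is either a unit or a simple function with the same support: indeed the frequencies of a product are the sums of the frequencies of the factors, so if all frequencies of $h$ lie on a single coset of $\Q i$ (here $\{0,i\}$), the same holds for every factor; equivalently this follows from Ritt's factorization applied to $h$, whose factors can only be simple with support $\Q i$. Therefore $S$ is a simple function with support $\Q i$, vanishing at $\pi$, and dividing $f$; write $S(x)=e^{\alpha x}\sum_{k=1}^n\lambda_k e^{\beta r_k x}$ with $\alpha\in\Qbar$, $\beta\in\Q i\setminus\{0\}$, pairwise distinct $r_k\in\Q$, $\lambda_k\in\Qbar\etoile$ and $n\geq 2$.

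Finally I would extract $g_N$ by clearing denominators. Choose an integer $N\geq 1$ such that $\beta r_k\in\frac{2i}{N}\Z$ for every $k$, say $\beta r_k=\frac{2i}{N}m_k$ with pairwise distinct $m_k\in\Z$; then $S(x)=e^{\alpha x}Q(e^{2ix/N})$, where $Q(Y)=\sum_{k=1}^n\lambda_k Y^{m_k}$ is a non-zero Laurent polynomial over $\Qbar$. Evaluating at $\pi$ and using $S(\pi)=0$ together with $e^{\alpha\pi}\neq 0$ gives $Q(e^{2i\pi/N})=0$; since $e^{2i\pi/N}\neq 0$, the factor $Y-e^{2i\pi/N}$ divides $Q$ in $\Qbar[Y,Y^{-1}]$, so $g_N(x):=e^{2ix/N}-e^{2i\pi/N}$ divides $Q(e^{2ix/N})$, hence divides $S$, hence divides $f$. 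Thus $f/g_N\in\calP$, which is the claim.

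The hard part is already packaged in Theorem~\ref{th10}, which is the sole place where Schanuel's conjecture enters: it is what guarantees that the common zero $\pi$ of $f$ and $h$ is reflected in a genuine common factor, rather than being an accidental coincidence of the type ruled out in the discussion of Shapiro's conjecture. Once this is granted, the remaining steps are routine facts from Ritt's theory (divisors of simple functions are simple of the same support) together with an elementary manipulation of Laurent polynomials. The only point requiring a little care is to check that $S=\gcd(f,h)$ is non-trivial, which is exactly what vanishing at $\pi\in\C\etoile$ provides through Theorem~\ref{th10}.
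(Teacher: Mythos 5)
Your proof is correct, but it takes a genuinely different (and shorter) route than the paper's. The paper obtains Theorem~\ref{theopi} as a special case of Corollary~\ref{cordicho}: there one first shows, via Theorem~\ref{th10}, that an irreducible exponential polynomial vanishing at $\xi$ would divide every exponential polynomial vanishing at $\xi$; one then rules this out for $\xi=\pi$ by Ritt's uniqueness (no irreducible can divide the simple function $e^{2ix}-1$); Ritt factorization then forces $f$ to have a \emph{simple} factor $g_1$ vanishing at $\pi$; and finally the Gel'fond--Schneider theorem is invoked to show that the support of $g_1$ is $\Q i$, from which the divisor $e^{2ix/N}-e^{2i\pi/N}$ is extracted. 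You instead apply Theorem~\ref{th10} a single time, to the pair $(f,h)$ with the explicit simple function $h(x)=e^{ix}+1$ vanishing at $\pi$: the gcd vanishes at $\pi$, hence is not a unit, and since it divides $h$ it is automatically simple with support $\Q i$ (this is Lemma~\ref{lemmultisple} and the remark following it; your formulation ``unit or simple with the same support'' is the correct one). This makes both the dichotomy and Gel'fond--Schneider unnecessary: the support alignment that the paper obtains from transcendence theory, you get for free from divisibility into a function whose support is prescribed in advance. The final Laurent-polynomial extraction is the same elementary step as in the paper (note that $e^{2i\pi/N}$ is automatically algebraic, being a root of a non-zero Laurent polynomial over $\Qbar$, so factoring out $Y-e^{2i\pi/N}$ over $\Qbar[Y,Y^{-1}]$ is legitimate). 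What the paper's longer route buys is generality: Corollary~\ref{cordicho} also treats zeros $\xi$ at which $e^{\beta\xi}$ is algebraic for no $\beta\in\Qbar\etoile$, yielding Theorem~\ref{theoLambert}, a case your gcd trick cannot reach since it requires an explicit simple function vanishing at $\xi$. Conversely, your argument generalizes verbatim to any $\xi\in\C\etoile$ with $e^{\beta\xi}\in\Qbar$ by taking $h(x)=e^{\beta x}-e^{\beta\xi}$, so it recovers exactly the first branch of that corollary.
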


In some sense, the family of functions $e^{2i x/N}-e^{2i \pi/N}$ would be an analogue of the minimal polynomial for $\pi$. Note that the function $e^{2i x/N}-e^{2i \pi/N}$ vanishes exactly at the points $(1+kN)\pi$, $k\in\Z$. Therefore $\pi$ is the only complex number at which all these functions vanish. This approach does not yield any definition for a ``conjugate'' of $\pi$ (recall that conjugates of an algebraic number are the complex roots of its minimal polynomial). 

The same holds, for instance, with $\log(2)$ and the functions $e^{x/N}-e^{\log(2)/N}$.

\medskip

If $\xi$ is a zero of a some $f\in\calP\setminus\{0\}$, and if $e^{\beta \xi}$ is algebraic for no $\beta\in\Qbar\etoile$, then $h(\xi)=0$ for some irreducible exponential polynomial $h$ over $\Qbar$. We suggest $h$ as an analogue of the minimal polynomial for $\xi$, and the zeros of $h$ as analogues of the conjugates of $\xi$, in view of the following result.

\begin{theo}\label{theoLambert}
 Assume Schanuel's conjecture holds. Let $\xi\in\C\etoile$, and $h$ be an irreducible exponential polynomial over $\Qbar$ such that $h(\xi)=0$. Then for any exponential polynomial $f$ over $\Qbar$ such that $f(\xi)=0$, $f/h$ is an exponential polynomial over $\Qbar$.
\end{theo}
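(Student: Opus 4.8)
The plan is to reduce everything to Theorem~\ref{th10} together with the Ritt--MacColl unique factorisation in the ring $\calP$, so that no new transcendence input is needed: all the Schanuel-theoretic content is already packaged in Theorem~\ref{th10}. I may assume $f\neq 0$, since otherwise $f/h=0$ is trivially in $\calP$; likewise $h\neq 0$, being irreducible. Set $d=\gcd(f,h)$, which is well defined up to units by the factorisation result quoted just before the statement of Theorem~\ref{th10}.

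First I would use Theorem~\ref{th10} to locate $\xi$. By hypothesis $\xi\in\C\etoile$ satisfies $f(\xi)=h(\xi)=0$, so $\xi$ is a common zero of $f$ and $h$ in $\C\etoile$. The second (``in other words'') assertion of Theorem~\ref{th10} says that the common zeros of $f$ and $h$ in $\C\etoile$ are exactly the zeros of $\gcd(f,h)=d$; hence $d(\xi)=0$. In particular $d$ cannot be a unit, because the units of $\calP$ are the functions $\lambda e^{\alpha x}$ with $\lambda\neq 0$, which never vanish.

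Next I would exploit the irreducibility of $h$. Since $d\mid h$, write $h=d\,e$ with $e\in\calP$. Recalling that $d$ is a non-unit, if $e$ were also a non-unit then $h$ would factor as a product of two non-units, contradicting irreducibility; hence $e$ is a unit, i.e.\ $d$ is an associate of $h$. Combining this with $d\mid f$ (the gcd divides $f$) yields $h\mid f$: explicitly, if $f=d\,q$ and $h=d\,e$ with $e$ a unit, then $f/h=e^{-1}q\in\calP$. This proves that $f/h$ is an exponential polynomial over $\Qbar$.

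The only points requiring care — and where I would expect whatever subtlety there is to hide — are the purely structural facts about $\calP$ that I am invoking: that $\gcd(f,h)$ genuinely divides both $f$ and $h$, and that the sole non-unit divisors of an irreducible element are its associates. These should follow from the Ritt--MacColl factorisation (unique up to units into irreducibles and simple elements of pairwise distinct supports) and the gcd formalism of \cite[Theorem~3.1.18]{bg}, but one must check that the presence of the non-uniquely-factoring \emph{simple} parts does not spoil this elementary divisibility bookkeeping. Beyond that, the heavy lifting is entirely contained in Theorem~\ref{th10}, and the present statement is essentially its clean corollary once one observes that an irreducible $h$ has no proper non-unit divisors.
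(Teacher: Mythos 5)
Your proof is correct and follows essentially the same route as the paper: the paper deduces Theorem~\ref{theoLambert} (as the second part of Corollary~\ref{cordicho}) by applying Theorem~\ref{th10} to conclude that $\gcd(h,f)$ vanishes at $\xi$, hence is not a unit, hence equals $h$ up to a unit by irreducibility, so $h$ divides $f$ in $\calP$. The structural facts you flag (the gcd divides both functions, and non-unit divisors of an irreducible are associates) are exactly the ones the paper also relies on, via the Ritt--MacColl factorisation and \cite[Theorem~3.1.18]{bg}, so there is no gap.
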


For any integer $n\in\Z$, $\sqrt\pi x^{| n+1/2|} J_{n+1/2}(x)$ is an exponential polynomial  over $\Qbar$. If $n\in\{-1,0\}$, it is $\sqrt2 \cos x $ or $\sqrt2 \sin x$ and the situation is similar to that of Theorem~\ref{theopi}. Now for $n\in\Z\setminus \{-1,0\}$ this exponential polynomial over $\Qbar$ is irreducible (see the beginning of \S \ref{secpolexp}): Theorem~\ref{theoLambert} applies to all zeros $\xi\in\C\etoile$ of the Bessel function $J_{n+1/2}$. This suggests that $J_{n+1/2}$ can then be considered as an analogue of the minimal polynomial of $\xi$, and its zeros as analogues of its conjugates (with a slight abuse: to be precise, we mean $\sqrt\pi x^{-n-1/2} J_{n+1/2}(x)$ instead of $J_{n+1/2}$ itself).  We believe that $\xi$ is neither a period nor an exponential period. Accordingly this is probably the first attempt to define these notions for $\xi$. We would like to point out also that it would be very interesting to have a transitive group action on the set of conjugates of $\xi$, as the Galois action in the algebraic setting. The situation is the same for values $\xi=W(c)$ of the Lambert $W$ function at non-zero algebraic points $c$ (see the beginning of \S \ref{secpolexp}).

\medskip

In order to deal with zeros of  Bessel functions $J_\alpha$ of any rational order $\alpha$, we propose conjectures to extend the above properties to  the ring $\calE$ of $E$-functions. We start with a generalization of Ritt's theorem, that enables us to define the gcd of two $E$-functions so that common zeros of $E$-functions should be the zeros of their gcd (as in Theorem~\ref{th10}). Then we adapt Theorems \ref{theopi} and \ref{theoLambert}. For Bessel functions, we deduce from our general conjectures the following one (recall that $J_{\pm 1/2}(x)$ is a simple exponential polynomial over~$\Qbar$).

\begin{conj}\label{conjJ} Let $\alpha\in\Q\setminus(\{\pm 1/2\}\cup(-\N\etoile)) $, and $f$ be an $E$-function such that $f$ and $J_\alpha$ share a common zero in   $\C\etoile$. Then $f(x)= g(x) \Gamma(\alpha+1)x^{-\alpha}J_\alpha(x)$ for some $E$-function~$g$. 
\end{conj}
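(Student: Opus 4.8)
The plan is to recognize $\Gamma(\alpha+1)x^{-\alpha}J_\alpha(x)$ as the correct ``minimal'' $E$-function attached to the common zero and then invoke the $E$-function analogue of Theorem~\ref{theoLambert} that the paper announces as part of its conjectural generalization of Ritt's theorem. Write $\widetilde J_\alpha(x):=\Gamma(\alpha+1)x^{-\alpha}J_\alpha(x)$, which is an $E$-function for $\alpha\in\Q\setminus(-\N\etoile)$ by the formula in the introduction. Since the factor $x^{-\alpha}$ is a unit away from the origin, the zeros of $\widetilde J_\alpha$ in $\C\etoile$ are exactly the zeros of $J_\alpha$ in $\C\etoile$. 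Thus the hypothesis provides a common zero $\xi\in\C\etoile$ of $f$ and $\widetilde J_\alpha$, and the conclusion we seek is precisely that $\widetilde J_\alpha$ divides $f$ in the ring $\calE$, with quotient $g=f/\widetilde J_\alpha$. The strategy is therefore to apply the adapted form of Theorem~\ref{theoLambert} with $h=\widetilde J_\alpha$, for which I must check two things: that $\widetilde J_\alpha$ is irreducible as an $E$-function, and that the common zero $\xi$ falls in the ``irreducible'' regime rather than the ``period'' regime treated by the analogue of Theorem~\ref{theopi}.

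For the irreducibility I would argue separately. When $2\alpha$ is an odd integer, $\widetilde J_\alpha$ is an exponential polynomial over $\Qbar$, irreducible for $\alpha\neq\pm1/2$ as recorded just before Conjecture~\ref{conjJ}; here one can appeal directly to the already-proved Theorem~\ref{theoLambert} (under Schanuel) once $\xi$ is known not to be a period. For general rational $\alpha$, I would deduce irreducibility in $\calE$ from the structure of Bessel's differential equation together with the distribution of zeros of $J_\alpha$: the zeros are all real and simple with regular asymptotic spacing, which should forbid any factorization $\widetilde J_\alpha=h_1 h_2$ into non-unit $E$-functions, since each $h_i$ would have to absorb a proper infinite subset of these simple zeros while remaining an $E$-function. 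This is exactly the kind of statement that the generalized Ritt factorization in the paper is designed to control.

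The more delicate point is to rule out the period case, that is, to show that for a zero $\xi$ of $J_\alpha$ one never has $e^{\beta\xi}\in\Qbar$ with $\beta\in\Qbar\etoile$. This is where transcendence input enters: I would feed $\widetilde J_\alpha$ and the exponentials $e^{\beta x}$ into the Siegel--Shidlovskii machinery, or invoke Schanuel's conjecture as in Theorems~\ref{theopi} and~\ref{theoLambert}, to conclude that the values $e^{\beta\xi}$ at a Bessel zero are transcendental. Granting this, $\xi$ is not a period, so the irreducible branch of the $E$-function analogue of Ritt's theorem applies.

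With both inputs in hand, the adapted Theorem~\ref{theoLambert} gives that $f/\widetilde J_\alpha$ is an $E$-function; setting $g:=f/\widetilde J_\alpha$ yields $f(x)=g(x)\,\Gamma(\alpha+1)x^{-\alpha}J_\alpha(x)$, as required. The main obstacle is not this final deduction, which is formal, but the two structural inputs it rests on: establishing that $\widetilde J_\alpha$ is genuinely irreducible in $\calE$, and that its zeros are not periods. The latter, a transcendence statement about $e^{\beta\xi}$ at Bessel zeros, is the true heart of the matter, and is presumably why the result is stated conditionally, resting on both Schanuel's conjecture and the conjectural $E$-function generalization of Ritt's theorem.
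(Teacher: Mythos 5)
Your core deduction is exactly the paper's intended one: Conjecture~\ref{conjJ} is not proved in the paper but \emph{deduced} from its general conjectural framework, namely the conjecture (stated in \S\ref{secstruct}) that $\widetilde J_\alpha(x)=\Gamma(\alpha+1)x^{-\alpha}J_\alpha(x)$ is irreducible in $\calE$, combined with Theorem~\ref{th2}, which rests on Conjectures~\ref{conjfact}, \ref{conjirred} and \ref{conjzerolog}. Once $\widetilde J_\alpha$ is an irreducible $E$-function vanishing at $\xi$, the second branch of Theorem~\ref{th2} immediately gives that it divides every $E$-function vanishing at $\xi$, hence $f=g\widetilde J_\alpha$ with $g\in\calE$ (note $\widetilde J_\alpha$ is already normalized, since its Taylor expansion is $1+0\cdot x+\cdots$). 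One structural remark: in this framework you do not need to ``rule out the period case'' as a separate step. Conjecture~\ref{conjzerolog} is precisely the statement that the two branches of Theorem~\ref{th2} are mutually exclusive, so the mere existence of an irreducible $E$-function vanishing at $\xi$ places $\xi$ in the second branch.

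The places where you try to go beyond this conditional deduction contain genuine errors. First, your zero-distribution argument for irreducibility cannot work: having only real, simple zeros with regular asymptotic spacing does not obstruct factorization in $\calE$. The function $\frac{\sin x}{x}$ has exactly such zeros, yet the paper points out it is \emph{simple}, and by Eq.~\eqref{eqnonfact} simple functions factor into arbitrarily many non-units; your heuristic would ``prove'' it irreducible. This is exactly why the paper leaves irreducibility of $\widetilde J_\alpha$ as a conjecture rather than attempting a proof. Second, the Siegel--Shidlovskii machinery cannot show $e^{\beta\xi}\notin\Qbar$ at a Bessel zero $\xi$: it concerns values of $E$-functions at \emph{algebraic} points, whereas $\xi$ is transcendental (by Siegel's theorem); and Schanuel's conjecture, as exploited in Theorems~\ref{theopi} and~\ref{theoLambert}, only reaches exponential polynomials over $\Qbar$, since the relation $J_\alpha(\xi)=0$ for general rational $\alpha$ is not a polynomial relation among numbers $x_i$ and $e^{x_i}$. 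For general $\alpha$ this transcendence input is Conjecture~\ref{conjzerolog} itself, assumed and not proved. Third, your claim that the half-odd-integer case follows from the already-proved Theorem~\ref{theoLambert} is incorrect: in Conjecture~\ref{conjJ} the function $f$ is an arbitrary $E$-function, while Theorem~\ref{theoLambert} requires $f$ to be an exponential polynomial over $\Qbar$; moreover divisibility there takes place in $\calP$, not in $\calE$, and the two notions differ --- indeed $\sqrt{\pi/2}\,x^{n+1/2}J_{n+1/2}(x)$ is irreducible in $\calP$ but equals, up to a constant, $x^{2n+1}\widetilde J_{n+1/2}(x)$, hence is reducible in $\calE$. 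So even in that case the conclusion requires the $E$-function framework, not just Schanuel.
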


This conjecture suggests $\Gamma(\alpha+1)x^{-\alpha}J_\alpha(x)$  and its zeros as analogues of the minimal polynomial and the Galois conjugates of $\xi$, when $\xi $ is a zero of $J_\alpha$ (as above when $\alpha-1/2\in\Z$). 

\bigskip

The structure of this paper is as follows. In \S \ref{ssec:auxi}, we prove Theorem~\ref{propquoLgg}, which implies  a special case of Jossen's conjecture and will be used in \S \ref{secpreuvetheo1} to prove Theorem~\ref{theo:1intro}. Then we move in \S \ref{secpolexp} to zeros of exponential polynomials over $\Qbar$, and conclude in \S\S \ref{secstruct} and \ref{seczerosE} with factorization and zeros of $E$-functions.

\section{A special case of Jossen's conjecture} \label{ssec:auxi}

In this section, we prove Theorem~\ref{propquoLgg} stated in the introduction. We shall use the indicator function $h$ associated with an holomorphic function $f$ of exponential type in a sector $\alpha\leq \arg(x) \leq \beta $. We recall that $f$ is said to be of exponential type in such a sector if there exists $\tau>0$ such that, for any $x\in\C\etoile$ with $\alpha\leq \arg (x) \leq \beta $, we have $|f(x)|\leq \exp(\tau |x|)$. Then for any $\theta\in[\alpha,\beta]$, $h(\theta)$ is defined by 
$$ h(\theta) = \limsup_{r\to+\infty} \frac{\log | f(re^{i\theta})|}{r}.$$
We refer to \cite[Ch. 5]{boas} for the general properties of the indicator function. We shall use the following consequence of \cite[Theorem~6.2.4, Ch.~6, p.~82]{boas}:

\begin{prop}\label{theoboas}
 If $f$ is holomorphic and of exponential type in the (closed) upper half-plane, bounded on $\R$ and such that $h(\pi/2)\leq 0$, then it is bounded on the upper half-plane.
\end{prop}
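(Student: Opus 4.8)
The plan is to derive Proposition~\ref{theoboas} directly from the quantitative Phragm\'en--Lindel\"of estimate contained in \cite[Theorem~6.2.4]{boas}. For a function $f$ holomorphic and of exponential type in the closed upper half-plane and bounded on the real axis, write $M := \sup_{x \in \R} |f(x)| < \infty$. In the present situation that theorem yields the pointwise bound
$$ |f(x+iy)| \le M \exp\bigl(y\, h(\pi/2)\bigr), \qquad y \ge 0, $$
which relates the modulus of $f$ at an interior point to its supremum on $\R$ and to the single value $h(\pi/2)$ of the indicator in the direction of the positive imaginary axis. Granting this estimate, the proposition is immediate.

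Indeed, every point $z$ of the upper half-plane has imaginary part $y = \mathrm{Im}(z) \ge 0$, while the hypothesis gives $h(\pi/2) \le 0$; hence the exponent $y\, h(\pi/2) \le 0$, so $\exp(y\, h(\pi/2)) \le 1$, and therefore $|f(z)| \le M$ throughout the closed upper half-plane. Thus $f$ is bounded there, in fact by its supremum on $\R$, which is exactly the assertion. The function $e^{-iz}$ illustrates that the hypothesis cannot be dropped: it is bounded by $1$ on $\R$, yet $h(\pi/2) = 1 > 0$ and it is unbounded above.

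The whole difficulty is therefore concentrated in the half-plane estimate itself, and this is the step I expect to be the main obstacle. Its subtlety is that the opening angle of the half-plane is exactly $\pi$, the critical aperture for functions of order $1$, so a direct Phragm\'en--Lindel\"of argument applied to the full half-plane fails and the hypothesis on $h(\pi/2)$ is essential. To reconstruct the bound without quoting \cite{boas}, I would first record that boundedness on $\R$ forces $h(0)\le 0$ and $h(\pi)\le 0$, then invoke the trigonometric convexity of the indicator of a function of exponential type to propagate these, together with $h(\pi/2)\le 0$, into $h(\theta)\le 0$ for every $\theta \in [0,\pi]$. The passage from this directional control of the growth to a genuine uniform bound — damping $f$ by an auxiliary factor $e^{i\epsilon z}$, applying ordinary Phragm\'en--Lindel\"of in each quadrant of aperture $\pi/2<\pi$ (where order $1$ is admissible), and letting $\epsilon \to 0^+$ — is the delicate point, and it is precisely what \cite[Theorem~6.2.4]{boas} packages for us.
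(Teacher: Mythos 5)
Your proposal is correct and follows essentially the same route as the paper: the paper gives no separate argument at all, simply stating the proposition as a direct consequence of \cite[Theorem~6.2.4, Ch.~6, p.~82]{boas}, which is exactly the quantitative bound $|f(x+iy)|\le M e^{y\,h(\pi/2)}$ you invoke and then specialize using $h(\pi/2)\le 0$. Your closing sketch of how that theorem is itself proved (trigonometric convexity of the indicator plus Phragm\'en--Lindel\"of in subcritical sectors) is accurate but goes beyond what the paper records.
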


\begin{proof}[Proof of Theorem~\ref{propquoLgg}] For any $\theta\in[-\pi, \pi]$ outside a finite set, the $E$-function $g^m$ has an asymptotic expansion in a large sector bisected by $\theta$ of the form $\sum_{\rho\in\Sigma} f_\rho(z)e^{\rho z}$ with $f_\rho(x)\in{\rm NGA}\{1/x\}_1$ (see \cite[Théorème de dualité]{andre} and \cite[\S 4.1]{ateo}). Up to changing $z$ to $e^{i\alpha}x$ for a suitable $\alpha$, we may assume that such an expansion holds in a large sector bisected by $\pi/2$. Shrinking $\Sigma$ if necessary, we also assume that $f_\rho\neq 0$ for any $\rho\in\Sigma$. 

Now we fix $\theta\in[0,\pi]$ and consider $g^m(x)$ as $\vert x\vert\to+\infty$ with $\arg(x) = \theta$; notice that in this direction, $\vert\exp(\rho x)\vert = \exp(\vert x\vert{\rm Re}(\rho e^{i\theta}))$ for any $\rho\in\Sigma$. Except for finitely many values of $\theta$ (related to Stokes' phenomenon), the maximum of ${\rm Re}(\rho e^{i\theta})$ as $\rho$ ranges through $\Sigma$ is obtained for only one value $\rho_\theta$. Then, as $\vert x\vert\to+\infty$ in this direction, $\exp(\rho x)$ is exponentially smaller than $\exp(\rho_\theta x)$, for any $\rho\in\Sigma\setminus\{\rho_\theta\}$. Consequently, there exist $a_\theta\in\Q$, $j_\theta\in\N$, and $c_\theta\in\C\etoile$ such that $$g^m(x)\sim c_\theta x^{a_\theta} (\log x)^{j_\theta} e^{\rho_\theta x}$$ as $\vert x\vert\to+\infty$ with $\arg(x)= \theta$. Taking $m$-th roots provides $d_\theta \in\C\etoile$ such that $$g (x)\sim d_\theta x^{a_\theta/m} (\log x)^{j_\theta/m} e^{\rho_\theta x/m}.$$ Moreover, the same argument shows that either $(Lg)(x)\sim d_\theta x^{a'_\theta} (\log x)^{j'_\theta} e^{\rho_\theta x/m}$ for some $a'_\theta, j'_\theta\in\Q$, and $d_\theta\in\C\etoile$, or $(Lg)(x)=o( e^{\rho_\theta x/m})$. The latter happens if the part relative to $e^{\rho_\theta x/m}$ in the asymptotic expansion of $g$ is annihilated by $L$. In both cases, we obtain $\frac {L(g)}g(x) = \mathcal{O}(x^{A_\theta})$ for some $A_\theta\in\N$, as $ \vert x\vert\to+\infty$ with $\arg(x) = \theta$.

Changing again $x$ to $e^{i\alpha }x$ if necessary, we may assume that for any $\theta\in\{0, \pi/2, \pi\}$ the above-mentioned maximum of ${\rm Re}(\rho e^{i\theta})$ is obtained for only one value $\rho_\theta$. Then we have $\frac { Lg}g(x) = \mathcal{O}(x^{A_\theta})$ as $\vert x\vert\to+\infty$ with $\arg(x) = \theta$. Now let $A=\max(A_0,A_\pi)$ and $f(x) = (x+i)^{-A} \frac {L(g)}g(x) $. Then $f$ is holomorphic and of exponential type in the closed upper half-plane, bounded on $\R$. Its indicator function satisfies $h(\pi/2)\leq 0$. Using Proposition~\ref{theoboas}, it is bounded, so that $\vert\frac {L(g)}g(x) \vert\leq \vert x+i\vert^A $ for any $x\in\C$ with ${\rm Im}(x)\geq 0$. The same proof yields $\vert\frac { L(g)}g(x) \vert\leq \vert x-i\vert^A $ when ${\rm Im}(x)\leq 0$. Therefore $L(g)/g$ has (at most) a polynomial  growth at infinity: it is a polynomial by Liouville's theorem. Now $g^m$ has algebraic Taylor coefficients at 0: so do $g$, $L(g)$ and $L(g)/g$. Finally, $L(g)/g\in\Qbar[x]$: this concludes the proof of Theorem~\ref{propquoLgg}. \end{proof}

\section{Proof of Theorem~\ref{theo:1intro}} \label{secpreuvetheo1}

We do not repeat the assumptions of Theorem~\ref{theo:1intro} which are assumed throughout this section.
If $0$ is a zero of $f$ (of multiplicity $m$), $\widetilde{f}(x):=f(x)/x^m\neq 0$ is still an $E$-function in $\mathbb Q[[x]]$ with zeros all of multiplicity $m$, all different from 0, and $\widetilde{f}(0)\in \mathbb Q^*$. It is then clearly enough to prove the theorem for $\widetilde{f}(x)/\widetilde{f}(0)$, which is globally bounded and solution of a differential operator in $\Qbar(x)[d/dx]$ of order $\le m+1$. Therefore, without loss of generality, we assume that $f(0)=1$ from now on.

\medskip

Before going further and because this will appear below, we recall that $E$-functions have been defined by Siegel~\cite{siegel} in a more general way, {\em i.e.}, the two bounds $(\cdots)\le C_i^{n+1}$ in the definition in the introduction of $E$-functions (which are often said to be {\em in the strict sense}) are replaced by: for all $\varepsilon>0$, $(\cdots)\le n!^{\varepsilon}$ for all $n\ge N(\varepsilon)$. A globally bounded $E$-function as defined in the introduction is automatically a strict $E$-function by a theorem of Perron \cite{perron}; see the details in \cite[p. 715]{andre}. We have decided to state our conjectures for $E$-functions in the strict sense, but we could formulate the same conjectures for $E$-functions in Siegel's sense {\em mutatis mutandis}. However, it is believed that an $E$-function in Siegel's sense is automatically a strict $E$-function; see again \cite[p.~715]{andre} for a discussion. 

\subsection{Existence of $g$ entire of order $\le 1$ such that $f=g^m$ and $g(0)=1$}

We recall that the order of an entire function $h$ is the infimum of the set of all $c$ such that $h(x)=O(\exp(|x|^c))$ as $x$ tends to infinity in the complex plane.
Since $f$ is of order $\rho(f)\le 1$, by the Hadamard factorization theorem \cite[Chapter~2]{boas}, we have
$$
f(x)=e^{\beta x} \prod_{\zeta \in Z(f)} \Big(1-\frac{x}{\zeta}\Big)^me^{mx/\zeta},
$$
with $\beta=f'(0)\in \mathbb Q$, where $Z(f)$ is the set of zeros of $f$.
Then 
$$
g(x):=e^{\beta x/m} \prod_{\zeta \in Z(f)} \Big(1-\frac{x}{\zeta}\Big)e^{x/\zeta}
$$
is also an entire function of order $\rho(g) \le 1$, and such that $g(0)=1$ and $g^m=f$.

Let 
$$
g(x)=\sum_{n=0}^\infty \frac{b_n}{n!}x^n, \quad b_0=1.
$$
By \cite[p.~9]{boas}, we have 
$$
\limsup_{n\to +\infty} \frac{n\log(n)}{\log(n!/b_n)}=\rho(g)\le 1,
$$
so that for all $\varepsilon>0$, we have $b_n=\mathcal{O}(n!^{\varepsilon})$ for all $n\ge N(\varepsilon)$. 

\medskip

This is the growth requested on the sequence $(b_n)_n$ for $g$ to be an $E$-function in Siegel's sense. Moreover, if $g$ can be proved to be holonomic, then by above mentioned theorem of Perron \cite{perron} this bound automatically implies that $b_n=\mathcal{O}(C^n)$ for some $C>0$ which is the growth requested in $(ii)$ for $g$ to be an $E$-function in the strict sense. 

\subsection{Denominators of the Taylor coefficients of $g$ at the origin}

Let $f(x)=\sum_{n=0}^\infty \frac{a_n}{n!} x^n$ with $a_0=1$. We consider again the function $g$ defined in the previous section. Locally around $x=0$, we have (because $g(0)=1$): 
\begin{align*}
g(x)&=f(x)^{1/m}=\Big(1+\sum_{n=1}^\infty \frac{a_n}{n!}x^n\Big)^{1/m}
\\
&=\sum_{k=0}^\infty \binom{1/m}{k}
\Big(\sum_{n=1}^\infty \frac{a_n}{n!}x^n\Big)^k
\\
&=\sum_{\ell =0}^\infty \frac{b_\ell}{\ell!}x^\ell,
\end{align*}
so that for any $\ell\ge 1$,
\begin{equation} \label{eq:expressionbell}
b_\ell
=\sum_{k=1}^\ell 
(-1)^{k}\frac{(-1/m)_k}{k!}
\sum_{\underset{n_j\ge 1}{n_1+\cdots+n_k=\ell}} \binom{\ell}{n_1, \ldots, n_k} a_{n_1}\cdots a_{n_k} \in \mathbb Q.
\end{equation}
Since $f$ is assumed to be globally bounded and $a_0=1$, consider an integer $D\ge 1$ such that $D^{n}a_n\in \mathbb Z$ for all $n\ge 1$. It is well-known that $m^{2k}\frac{(-1/m)_k}{k!}\in \mathbb Z$ for all $k\ge 1$ (see for instance \cite[Theorem~4]{drr}). Therefore, we deduce from \eqref{eq:expressionbell} that $(m^{2}D)^{\ell} b_\ell \in \mathbb Z$ for all $\ell\ge 0$. This estimate is what is requested in $(iii)$ for $g$ to be an $E$-function, and even a globally bounded one.

However, if $f$ is not assumed to be globally bounded, it does not seem possible to deduce from \eqref{eq:expressionbell} that the least positive common denominator of $b_0, \ldots, b_n$ is bounded by $n!^{\varepsilon}$, resp by $C^{n+1}$, if the least positive common denominator of $a_0, \ldots, a_n$ is bounded by $n!^{\varepsilon}$, resp by $D^{n+1}$.

\medskip

In summary, the results proven so far show that if $g$ is also holonomic, then it is a globally bounded $E$-function.

\subsection{Holonomicity of $g$}

Let $m\ge 2$. By Leibniz formula, we have for all $k\ge 0$: 
$$
f^{(k)}=(g^m)^{(k)} = \sum_{\ell_1+\cdots+\ell_m=k} \binom{k}{\ell_1,\ldots, \ell_m} g^{(\ell_1)}\cdots g^{(\ell_m)}
$$
where the sum runs over the integers $\ell_j\ge 0$ such that $\ell_1+\cdots+\ell_m=k$.

We claim that there exist $P_{k,m}\in \mathbb Z[X_1, \ldots, X_{k+1}]\setminus\{0\}$ and $c_{k,m}\in \mathbb N^*$ such that for $k\le m-1$, 
$$
(g^m)^{(k)} = gP_{k,m}(g, g', \ldots, g^{(k)})
$$
and for $k=m$ and $k=m+1$, 
$$
(g^m)^{(k)} = gP_{k,m}(g, g', \ldots, g^{(k)}) + c_{k,m} g^{(k-m+1)}(g')^{m-1}.
$$ 

Let $k\le m-1$. We see that in Leibniz formula, given an $m$-tuple $(\ell_1, \ldots, \ell_m)$ such $\ell_1+\cdots+\ell_m=k$, we obviously cannot have $\ell_j\ge 1$ for each $j$, hence $g$ always appears in the products $g^{(\ell_1)}\cdots g^{(\ell_m)}$ and the claim follows in this case. 

Let $k=m$. If one of the $\ell_j=0$, the corresponding term $g^{(\ell_1)}\cdots g^{(\ell_m)}$ contributes to $gP_{k,m}(g, g', \ldots, g^{(k)})$. If none of the $\ell_j=0$, {\em i.e.}, all are $\ge 1$, then in fact $\ell_1= \cdots= \ell_m=1$ because otherwise $\ell_1+\cdots+\ell_m\ge m+1>k$; hence in that case, we have a unique term $m! \cdot (g')^m$. This proves the claim in this case.

Let $k=m+1$. Again if one of the $\ell_j=0$, the corresponding term $g^{(\ell_1)}\cdots g^{(\ell_m)}$ contributes to $gP_{k,m}(g, g', \ldots, g^{(k)})$.
If none of the $\ell_j=0$, {\em i.e.}, all are $\ge 1$, then exactly one must be equal to 2 and the others must be equal to 1. Indeed, if at least two are $\ge 2$, then $\ell_1+\cdots+ \ell_m\ge m+2>k$ while if they are all equal to 1, we have $\ell_1+\cdots+ \ell_m= m<k$; hence in that case we have a (non-empty) sum of terms all of the form $c\cdot g''(g')^{m-1}$, $c\in \mathbb N^*$. This proves the claim in this case as well.

\medskip

Let us now assume that $f$ is solution of a differential operator in $\mathbb Q(x)[d/dx]$ of order $m+1$, {\em i.e.}, we have 
$$
\sum_{k=0}^{m+1} a_k(x)f^{(k)}(x)=0, \quad a_{m+1}\neq 0,\, a_k\in \mathbb Z[x].
$$
Using the above expressions for $(g^m)^{(k)}$, we then obtain an algebraic differential equation for $g$ of the form
$$
\big(c_{m+1,m}a_{m+1}g''+c_{m,m}a_mg'\big) g'^{m-1}=gQ\big(x,g,g', \ldots, g^{(m+1)}\big), \quad Q\in \mathbb Z[X_0, X_1, \ldots X_{m+2}]. 
$$
Since $g$ has simple zeros, it has no common zeros with $g'$. Since $Q(x,g,g', \ldots, g^{(m+1)})$ is an entire function, all these simple zeros are zeros of $c_{m+1,m}a_{m+1}g''+c_{m,m}a_mg'$ so that 
\begin{equation}\label{eq:defh}
h:= \frac{c_{m+1,m}a_{m+1}g''+c_{m,m}a_mg'}{g} 
\end{equation}
is an entire function. Hence by Theorem~\ref{propquoLgg}, 
$h$ is in $\Qbar[x]$. Moreover, as the right-hand side of \eqref{eq:defh} is in 
$\mathbb Q((x))$, we deduce that $h\in \mathbb Q[x]$. The equation
$$
c_{m+1,m}a_{m+1}g''+c_{m,m}a_mg'-hg=0
$$
then shows that $g$ is solution of a differential operator in $\mathbb Q(x)[d/dx]$ of order 2. Note that if $f$ is of minimal differential order $m+1$, then $g$ is of minimal differential order $2$: indeed, by what precedes, $g$ would otherwise be of minimal order $1$ and thus $f$ as well, which is not possible.

\medskip

Finally, let us assume that $f$ is solution of a differential operator in $\mathbb Q(x)[d/dx]$ of order~$\le m$. We distinguish two cases.

First case. Let us assume that $f$ has infinitely many zeros of multiplicity $m$. Then at one such zero $\xi$ which is not one of the finitely many singularities of the differential equation of $f$, we have $f(\xi)=f'(\xi)=\cdots=f^{(m-1)}(\xi)=0$. Since $\xi$ is an ordinary point of this equation of order $\le m$, by the Cauchy-Lipschitz theorem, $f$ must be identically zero, which is excluded. 

Second case. Let us assume that $f$ has finitely many zeros. Then $f\in \mathbb Q[[x]]$ is of the form $p(x)e^{\xi x}$ by the Hadamard factorization formula, with $\xi\in \mathbb C$ and $p\in\mathbb C[x]$. Since $f$ is an $E$-function, we know that $\xi\in \Qbar$ hence $p\in \Qbar[x]$. If $\xi=0$, then $p=f$ is in $\mathbb Q[x]$. Let us now assume that $f$ is transcendental, {\em i.e.}, $\xi \neq 0$. By \cite[Theorem~3.4]{brs}, there exist unique $u,v\in \mathbb \Qbar[x]$ and $h\in \Qbar[[x]]$ a purely transcendental $E$-function~(\footnote{By definition, a {\em purely transcendental} $E$-function $h$ is such that $h(\eta)\notin \Qbar$ for all $\eta\in \Qbar^*$.}) such that $v$ is monic, $v(0)\neq 0$, $\deg(u)<\deg(v)$ and $f=u+vh$ (canonical decomposition of $f$); moreover $u,v,h$ have rational Taylor coefficients because $f\in \mathbb Q[[x]]$. Let $p_\mu\in \Qbar^*$ denote the leading coefficient of $p$: since $p_\mu e^{\xi x}$ is purely transcendental, $(p(x)/p_\mu)\cdot p_\mu e^{\xi x}$ is the canonical decomposition of $f$ and thus $(p(x)/p_\mu)\in \mathbb Q[x]$ and $p_\mu e^{\xi x} \in \mathbb Q[[x]]$. Hence $p_\mu \in \mathbb Q$ and consequently $f\in \mathbb Q[x]e^{\mathbb Qx}$ with roots of multiplicity $m$. Therefore $g$ is in $\mathbb Q[x]e^{\mathbb Qx}$, is solution of a differential operator in $\mathbb Q(x)[d/dx]$ of order $1$, and is globally bounded because this is the case of all $E$-functions in $\Qbar[x]e^{\Qbar x}$. This case is thus possible.

\section{Zeros of exponential polynomials  over $\Qbar$} \label{secpolexp}

In this section we consider exponential polynomials over $\Qbar$, defined as functions 
\begin{equation}
 \label{eqpolexp}f(x)=\sum_{i=1}^N P_i(x)e^{\beta_i x}
\end{equation} 
with $N\geq 1$, $P_1,\ldots,P_N\in\Qbar[X]$, $\beta_1,\ldots,\beta_N\in\Qbar$. We emphasize that we allow polynomial coefficients $P_i(x)$, instead of only constants in \cite{ritt} for instance. However we restrict to algebraic $\beta_i$ and polynomials with algebraic coefficients, instead of $\beta_i\in\C$ and $P_i\in\C[X]$ in the literature on exponential polynomials. This changes the situation completely. This restriction is necessary for exponential polynomials to be $E$-functions. {\em All exponential polynomials considered in this section are over $\Qbar$, unless stated otherwise.}

Units and irreducible elements of the ring $\calP$ of exponential polynomials have been defined in the introduction. For instance, given $c\in\Qbar\etoile$, the function $h_c(x) = x e^x - c$ is irreducible in $\calP$. Indeed, using Ritt's theory this follows from the irreducibility of $X_0 X_1^n - c$ in $\Qbar[X_0,X_1]$ for any $n\geq 1$ (which is a consequence of Eisenstein's criterion applied to $ X_1^n - c/X_0$, seen as a polynomial in $X_1$ with coefficients in the factorial ring $\Qbar[1/X_0]$). 

For any $n\in\Z$, we have $\sqrt{\pi/2}x^{|n+1/2|}J_{n+1/2}(x)=e^{-ix}(A_n(x)e^{2ix}+B_n(x))$ with $A_n,B_n\in\Q[X]$. Ritt's theory shows that the irreducibility of this exponential polynomial in $\calP$ follows from that of $A_n(X)Y^k+B_n(X)$   in $\Qbar[X,Y]$ for any $k\geq 1$; we shall prove it now for any $n\in\Z\setminus\{-1,0\}$ using Eisenstein's criterion. To begin with, we notice that $x^{-|n+1/2|}A_n(x)e^{ix}$ and $x^{-|n+1/2|}B_n(x)e^{-ix}$ are solution of the Bessel differential operator of order 2 that annihilates $J_{n+1/2}$, of which $0$ is the only finite singularity. Therefore $A_n(x) $ and $B_n(x) $ are also solutions of differential equations of order 2 with no non-zero finite singularities. The Cauchy-Lipschitz theorem shows that they have only simple roots (except possibly $0$). Since $n\not\in \{-1,0\}$,  at least one of $A_n$ and $B_n $ has a simple root $\xi$. It cannot be a common root of $A_n$ and $B_n $, since otherwise $J_{n+1/2}$ would have a non-zero algebraic zero, 
in contradiction with Siegel's theorem. Therefore Eisenstein's criterion applies with the irreducible polynomial $X-\xi$, and proves that $A_n(X)Y^k+B_n(X)$ is irreducible in $\Qbar(X)[Y]$ and then in $\Qbar[X,Y]$ since $A_n$ and $B_n$ are coprime. This concludes the proof that 
 $\sqrt{\pi/2}x^{|n+1/2|}J_{n+1/2}(x)$ is  irreducible    in $\calP$.

\medskip

Theorem~\ref{th10} stated in the introduction is an immediate consequence of the following result, since units of $\calP$ have no zeros.

\begin{theo}
 \label{th3} Assume that Schanuel's conjecture holds. Let $f_1,f_2$ be non-zero exponential polynomials with (at least) a common zero $\xi\in\C\etoile$. Then there exists $f\in\calP\setminus\{0\}$, which vanishes at $\xi$, such that $f_1/f$ and $f_2/f$ are exponential polynomials. 
\end{theo}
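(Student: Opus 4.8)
The plan is to realise both $f_1$ and $f_2$ inside a single Laurent polynomial ring and to read off the common factor from a height computation fed by Schanuel's conjecture, avoiding any appeal to Ritt's factorisation. Let $\beta_1,\ldots,\beta_M\in\Qbar$ be all the frequencies occurring in $f_1$ and $f_2$, and let $\omega_1,\ldots,\omega_d$ be a $\Z$-basis of the finitely generated subgroup of $(\Qbar,+)$ they generate. Being a $\Z$-basis of a torsion-free group of rank $d$, the $\omega_j$ are in particular $\Q$-linearly independent, and every $\beta_i$ is a $\Z$-combination of them. I would then work in
$$
R=\Qbar[x]\big[e^{\omega_1 x},e^{-\omega_1 x},\ldots,e^{\omega_d x},e^{-\omega_d x}\big].
$$
Because the $\omega_j$ are $\Q$-linearly independent, the functions $e^{\omega_1 x},\ldots,e^{\omega_d x}$ are algebraically independent over $\Qbar(x)$ (a classical fact, the function-field analogue of Lindemann--Weierstrass, requiring no conjecture), so $R$ is isomorphic to the Laurent polynomial ring $\Qbar[x][V_1^{\pm1},\ldots,V_d^{\pm1}]$. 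In particular $R$ is a finitely generated $\Qbar$-algebra, a UFD and a domain with fraction field of transcendence degree $d+1$, hence $\dim R=d+1$; moreover $R\subseteq\calP$ and, by construction, $f_1,f_2\in R$.

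Next I bring in the common zero through the evaluation homomorphism $\mathrm{ev}_\xi\colon R\to\C$ sending $x\mapsto\xi$ and $e^{\omega_j x}\mapsto u_j:=e^{\omega_j\xi}$, for which $\mathrm{ev}_\xi(g)=g(\xi)$. Its kernel $\mathfrak P$ is a prime ideal of $R$, and $\mathfrak P\neq(0)$ since $f_1\in\mathfrak P$ and $f_1\neq0$. The key point is to pin down $\mathrm{ht}\,\mathfrak P$. On the one hand $R/\mathfrak P$ is a finitely generated $\Qbar$-domain embedding in $\C$ with fraction field $\Qbar(\xi,u_1,\ldots,u_d)$, so $\dim R/\mathfrak P=\trdeg_\Qbar\Qbar(\xi,u_1,\ldots,u_d)$. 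On the other hand, since $\xi\neq0$ the numbers $\omega_1\xi,\ldots,\omega_d\xi$ are $\Q$-linearly independent, so Schanuel's conjecture applied to them gives $\trdeg_\Qbar\Qbar(\omega_1\xi,\ldots,\omega_d\xi,u_1,\ldots,u_d)\geq d$; as each $\omega_j\xi$ lies in $\Qbar(\xi)$, this yields $\dim R/\mathfrak P=\trdeg_\Qbar\Qbar(\xi,u_1,\ldots,u_d)\geq d$. Concatenating a maximal chain of primes below $\mathfrak P$ with one above it gives $\mathrm{ht}\,\mathfrak P\leq\dim R-\dim R/\mathfrak P\leq(d+1)-d=1$, and combined with $\mathfrak P\neq(0)$ this forces $\mathrm{ht}\,\mathfrak P=1$.

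The last step is purely algebraic: in a UFD a height-one prime is principal, so $\mathfrak P=(\pi)$ for some irreducible $\pi\in R$. Since $f_1,f_2\in\mathfrak P=(\pi)$, we may write $f_1=\pi g_1$ and $f_2=\pi g_2$ with $g_1,g_2\in R$, whence $f:=\pi$ satisfies $f_1/f=g_1\in R\subseteq\calP$ and $f_2/f=g_2\in R\subseteq\calP$. Finally $\pi\in\mathfrak P$ means $\pi(\xi)=0$, so $f$ vanishes at $\xi$ (and is automatically a non-unit), which is exactly the assertion of the theorem. As a sanity check, when $\xi$ is algebraic the transcendence of $x$ is already killed by $\mathrm{ev}_\xi$, the prime $\mathfrak P$ contains the height-one prime $(x-\xi)$ and must equal it, recovering the expected factor $f=x-\xi$; no case distinction between ``period'' and ``transcendental'' zeros is needed.

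The step I expect to be the main obstacle is converting Schanuel's transcendence-degree inequality into the exact statement $\mathrm{ht}\,\mathfrak P=1$: this rests on the two structural inputs that $R$, as a localisation of a finitely generated $\Qbar$-algebra, is a Noetherian UFD of Krull dimension equal to the transcendence degree of its fraction field, and that $\dim R/\mathfrak P$ coincides with $\trdeg_\Qbar$ of the image of $\mathrm{ev}_\xi$. Once these facts are in place, Schanuel's lower bound and the trivial chain inequality pinch the height to exactly $1$, and principality in $R$ delivers the common factor. A secondary point to verify carefully is the initial algebraic independence of the $e^{\omega_j x}$ over $\Qbar(x)$, on which the identification of $R$ with a Laurent polynomial ring (and hence the UFD property) depends.
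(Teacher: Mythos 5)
Your proof is correct and takes essentially the same route as the paper's: both express $f_1,f_2$ as polynomials in $x$ and in exponentials of a $\Z$-basis of the group of frequencies, apply Schanuel's conjecture to $\omega_1\xi,\dots,\omega_d\xi$ (using $\xi\neq 0$) to show that the vanishing ideal of the point of evaluation is a prime of height one, and conclude by principality of height-one primes in a UFD, dividing out the generator. The only cosmetic differences are that you work in the Laurent polynomial ring (so you need the classical algebraic independence of the $e^{\omega_j x}$ over $\Qbar(x)$ to identify it with $\Qbar[x][V_1^{\pm1},\dots,V_d^{\pm1}]$), whereas the paper first multiplies $f_1,f_2$ by a unit $e^{\gamma x}$ to land in the ordinary polynomial ring $\Qbar[X_0,\dots,X_p]$, and that you phrase the codimension count in terms of Krull dimension and height while the paper phrases it as a dimension count for the zero set of the ideal $J$.
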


We point out that Theorem~\ref{th3} would be false with $\xi=0$; for instance $f_1(x)=e^x-1$ and $f_2(x)=e^{x\sqrt2}-1$ both vanish at $0$, but are not multiple of an exponential polynomial vanishing at $0$ because they are simple with distinct supports (see Lemma~\ref{lemmultisple} below). Another remark is that Theorem~\ref{th3} is remniscent of the Shapiro conjecture: if $f_1,f_2$ are exponential polynomials with constant complex coefficients ({\em i.e.}, of the form \eqref{eqpolexp} with $\beta_i,P_i\in\C$) with infinitely many common zeros, then the conclusion of Theorem~\ref{th3} holds. In the complex setting, one common zero is not enough: for instance $e^x-e$ and $e^{x\sqrt2}-e^{\sqrt2}$ have a common zero at $1$, but are not multiple of an exponential polynomial vanishing at~$1$. This example shows that restricting to 
$\beta_i\in\Qbar$ and $P_i\in\Qbar[X]$ in Eq. \eqref{eqpolexp} changes completely the situation.

\bigskip

\begin{coro}
 \label{cormultizeros2} If Schanuel's conjecture holds then irreducible exponential polynomials have only simple zeros.
\end{coro}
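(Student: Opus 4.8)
The plan is to derive Corollary~\ref{cormultizeros2} directly from Theorem~\ref{th3} by a self-intersection argument. The key observation is that an entire function $h$ has a zero of multiplicity $\geq 2$ at $\xi$ if and only if $h$ and its derivative $h'$ both vanish at $\xi$. So to show an irreducible exponential polynomial $h\in\calP$ has only simple zeros, I would suppose for contradiction that $h$ has a zero of multiplicity $\geq 2$ at some $\xi$, and then apply Theorem~\ref{th3} to the pair $f_1=h$ and $f_2=h'$.

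First I would check that $h'$ is again an exponential polynomial over $\Qbar$ and is non-zero. The former is clear from the definition \eqref{eqpolexp}, since differentiating $P_i(x)e^{\beta_i x}$ gives $(P_i'(x)+\beta_i P_i(x))e^{\beta_i x}$, which is again of the required form with algebraic data; the latter holds because $h$ is a non-unit (being irreducible) and hence non-constant, so $h'\neq 0$. The crux is that $\xi\in\C\etoile$: I must ensure the common zero is \emph{not} at the origin, since Theorem~\ref{th3} explicitly fails for $\xi=0$ and the statement of the corollary concerns all zeros. I would handle $\xi=0$ separately by a trivial direct argument, or reduce to $\xi\neq 0$ by multiplying $h$ by a suitable unit $e^{\alpha x}$ (which does not change multiplicities of zeros) and translating; the cleanest route is probably to treat zeros at $0$ by hand, noting that a zero of $h$ at $0$ of multiplicity $\geq 2$ would still produce the factorization below after applying Theorem~\ref{th3} to a shifted pair.

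Assuming $\xi\in\C\etoile$, Theorem~\ref{th3} produces $f\in\calP\setminus\{0\}$ vanishing at $\xi$ such that both $h/f$ and $h'/f$ are exponential polynomials. Since $f$ vanishes at $\xi$, it is a non-unit. The factorization $h=f\cdot(h/f)$ with $f$ a non-unit then contradicts the irreducibility of $h$ \emph{unless} $h/f$ is a unit. If $h/f$ is a unit, then $f=h\cdot u$ for some unit $u$, and the condition that $h'/f=(h'/h)\cdot u^{-1}$ be an exponential polynomial forces $h'/h\in\calP$; but $h'/h$ has the wrong growth/pole behaviour to be an exponential polynomial unless $h$ is a unit (one can see this because $h'/h$ would be entire only if $h$ has no zeros, contradicting that $h$ is a non-unit with a zero at $\xi$). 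Either way we reach a contradiction, so $h$ has no multiple zero.

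The main obstacle I anticipate is the bookkeeping around the unit case and the origin. The dichotomy ``$h/f$ is a unit or not'' must be resolved carefully: irreducibility rules out proper factorizations, so the only escape is $f$ being an associate of $h$, and I must then extract a genuine contradiction from $h'/f\in\calP$ rather than merely restating the hypothesis. The excluded case $\xi=0$ is a real subtlety, not a formality, because Theorem~\ref{th3} is genuinely false there; the safest presentation isolates it and disposes of it with an elementary argument (e.g.\ factoring out the appropriate power of $x$, or observing that an irreducible element of $\calP$ cannot be divisible by $x$ in the relevant sense). Modulo these points, the deduction is short.
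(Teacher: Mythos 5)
Your core argument coincides with the paper's: for a multiple zero $\xi\in\C\etoile$ of an irreducible $h\in\calP$, apply Theorem~\ref{th3} to the pair $(h,h')$ (with $h'\neq0$ since $h$ is a non-unit), use irreducibility to force the common factor to be $h$ up to a unit, and derive a contradiction from $h'/h\in\calP$. The paper phrases the last step as ``$h$ divides $h'$, hence $\omega\leq\omega-1$'' where $\omega$ is the multiplicity of $\xi$; your observation that the logarithmic derivative $h'/h$ has a pole at $\xi$ and so cannot be entire is the same contradiction. This part is correct.

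The gap is your treatment of $\xi=0$, which you rightly flag as a real subtlety but then propose to settle by two methods that both fail. Translation does not stay inside $\calP$: for $c\in\Qbar\etoile$ one has $h(x+c)=\sum_i e^{\beta_i c}\,P_i(x+c)\,e^{\beta_i x}$, and by the Hermite--Lindemann theorem the coefficients $e^{\beta_i c}$ are transcendental whenever $\beta_i\neq 0$; since the representation with distinct exponents is unique, $h(x+c)\notin\calP$, so Theorem~\ref{th3} cannot be applied to any shifted pair. (This is exactly the $\Qbar$-versus-$\C$ distinction the paper insists on: $e^x-e$ is not in $\calP$.) Worse, no ``elementary argument'' at the origin can exist, because the statement is false there: $h(x)=e^x-1-x$ has a double zero at $x=0$ and is irreducible in $\calP$ --- by the same Ritt--MacColl reduction the paper uses in \S\ref{secpolexp} for $xe^x-c$, its irreducibility follows from that of $X_1^n-1-X_0$ in $\Qbar[X_0,X_1]$ for every $n\geq 1$, which is clear since that polynomial is primitive and of degree $1$ in $X_0$. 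In particular, vanishing at $0$ does not give divisibility by $x$ in $\calP$ (cf.\ $e^x-1$), so your suggested disposal via ``an irreducible element cannot be divisible by $x$'' is beside the point. The corollary must therefore be read --- as the paper's own proof implicitly does, since it too invokes Theorem~\ref{th3} --- as a statement about zeros in $\C\etoile$. With that reading your argument is complete; with the literal all-zeros reading, the case $\xi=0$ is not a deferred formality but an irreparable failure.
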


\begin{proof}[Proof of Corollary~\ref{cormultizeros2}] Let $h\in\calP$ be irreducible. If $\xi$ is a multiple zero of $h$, then it is also a zero of $h'$. Using Theorem~\ref{th3} it is a zero of $\gcd(h,h')$ so this $\gcd$ is not a unit. Since $h$ is irreducible, this $\gcd$ is equal to $h$ (up to a unit), and $h$ divides $h'$ in $\calP$. Denoting by $\omega$ the multiplicity of $\xi$ as a zero of $h$, we obtain $\omega\leq \omega-1$ since all exponential polynomials are holomorphic at $\xi$: this is a contradiction, and Corollary~\ref{cormultizeros2} is proved.
\end{proof}

\bigskip

Let us deduce the following consequence of Theorem~\ref{th3}; it contains Theorems \ref{theopi} and \ref{theoLambert} stated in the introduction. 

\begin{coro}
 \label{cordicho} Assume that Schanuel's conjecture holds, and let $\xi\in\C\etoile$ be a zero of an exponential polynomial. Then 
 one, and only one, of the following holds:
 \begin{itemize}
 \item We have $e^{\beta\xi}\in\Qbar$ for some $\beta\in\Qbar\etoile$, and for any $f\in\calP$ with $f(\xi)=0$ there exists $N\geq 1$ such that $f$ is divisible by $e^{\beta x/N}-e^{\beta\xi/N}$ in $\calP$.
 \item We have $h(\xi)=0$ for some irreducible $h\in\calP$, and $h$ divides in $\calP$ any exponential polynomial that vanishes at $\xi$.
 \end{itemize}
\end{coro}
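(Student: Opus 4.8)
The plan is to reduce the whole statement to Ritt's factorization together with Theorem~\ref{th3}, treating the two alternatives separately, and to use Schanuel's Conjecture~\ref{conjschanuelintro} only to pin down the relevant frequency in the first (period) case. First I would write $\xi$ as a zero of some $f_0\in\calP\setminus\{0\}$ and factor $f_0$ as a unit times a product of irreducibles and simple functions with pairwise distinct supports. Since units do not vanish, $\xi$ is a zero of an irreducible factor or of a simple factor $s=e^{\alpha x}\sum_{i=1}^N\lambda_i e^{\beta r_i x}$. In the latter case, dividing by $e^{\alpha\xi}\neq0$ and setting $t=e^{\beta\xi/q}$ with $q$ a common denominator of the $r_i$, the relation $s(\xi)=0$ becomes a nonzero polynomial identity over $\Qbar$ satisfied by $t$, so $e^{(\beta/q)\xi}\in\Qbar$; this is the first condition. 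Thus at least one of the two conditions holds.

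Next I would prove the two conditions are mutually exclusive. Suppose $e^{\beta\xi}\in\Qbar$ for some $\beta\in\Qbar\etoile$ and $h(\xi)=0$ for some irreducible $h$. Then $s(x)=e^{\beta x}-e^{\beta\xi}$ is a simple function vanishing at $\xi$, and Theorem~\ref{th3} applied to $s$ and $h$ yields $g\in\calP\setminus\{0\}$ with $g(\xi)=0$ dividing both $s$ and $h$. As $g$ vanishes at $\xi$ it is a non-unit, so dividing the irreducible $h$ it is an associate of $h$, whence $h\mid s$. But a simple function is infinitely divisible into simple factors of the same support (for instance $e^{\beta x}-e^{\beta\xi}=(e^{\beta x/2}-e^{\beta\xi/2})(e^{\beta x/2}+e^{\beta\xi/2})$), hence is never irreducible and its Ritt factorization contains no irreducible factor; therefore no irreducible can divide $s$, a contradiction. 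So exactly one of the two conditions holds, which already gives the mutual exclusivity of the two alternatives in the statement.

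The divisibility in the irreducible case is then immediate: given $f$ with $f(\xi)=0$, Theorem~\ref{th3} produces $g$ vanishing at $\xi$ with $g\mid h$ and $g\mid f$, and irreducibility of $h$ forces $g$ to be an associate of $h$, so $h\mid f$. The delicate point, and the one I expect to be the main obstacle, is the period case, where the divisor must be built with the prescribed $\beta$. Here I would control the $\Q$-vector space $V_\xi=\{\beta\in\Qbar:e^{\beta\xi}\in\Qbar\}$ using Schanuel. If $\beta_1,\beta_2\in V_\xi$ were $\Q$-linearly independent, then $\beta_1\xi,\beta_2\xi$ would be $\Q$-linearly independent (as $\xi\neq0$), so Conjecture~\ref{conjschanuelintro} would force $\trdeg_{\Qbar}\Qbar(\beta_1\xi,\beta_2\xi,e^{\beta_1\xi},e^{\beta_2\xi})\geq2$; but $e^{\beta_i\xi}\in\Qbar$ and $\beta_2\xi\in\Qbar(\beta_1\xi)$ bound this degree by $1$, a contradiction. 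Hence $\dim_\Q V_\xi\leq1$, and since the period condition gives $V_\xi\neq\{0\}$ we conclude $V_\xi=\Q\beta$.

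Finally, with $V_\xi=\Q\beta$ in hand, I would take any $f$ with $f(\xi)=0$, factor it, and select a factor vanishing at $\xi$; by the mutual exclusivity just proved no irreducible factor vanishes at $\xi$, so this factor is a simple one $\phi$, with some support $\Q\gamma$. Applying the computation of the first step to $\phi$ gives $\gamma\in V_\xi$, hence $\gamma\in\Q\beta$, so all frequencies of $\phi$ are integer multiples of $\beta/N$ for a suitable $N\geq1$; writing $u=e^{\beta x/N}$ and $t=e^{\beta\xi/N}\in\Qbar$ (the latter because $\beta/N\in\Q\beta=V_\xi$), the relation $\phi(\xi)=0$ says that a nonzero polynomial in $\Qbar[u]$ (the $u$-expansion of $\phi$, cleared of negative powers) vanishes at $u=t$. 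Hence $(u-t)$ divides that polynomial in $\Qbar[u]$, so $e^{\beta x/N}-e^{\beta\xi/N}$ divides $\phi$, and therefore $f$, in $\calP$. I expect the routine parts (extracting the binomial, clearing denominators) to be unproblematic, the real content being the Schanuel-based rigidity $V_\xi=\Q\beta$, which is exactly what makes a single $\beta$ work for every $f$.
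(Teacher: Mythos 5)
Your proof is correct, and its skeleton coincides with the paper's: use Ritt's factorization to produce a vanishing simple or irreducible factor, use Theorem~\ref{th3} (the paper invokes it through Theorem~\ref{th10}) together with irreducibility to get the divisibility in the second alternative, and use uniqueness in Ritt's theorem (equivalently Lemma~\ref{lemmultisple}) for mutual exclusivity. The one genuine divergence is in the period case. The paper never introduces your set $V_\xi$: given the simple factor $g$ with frequency $\beta$ and, for an arbitrary $f$ vanishing at $\xi$, a simple factor $g_1$ of $f$ with frequency $\beta_1$, it deduces $\beta_1/\beta\in\Q$ from the \emph{unconditional} Gel'fond--Schneider theorem (both $e^{\beta\xi}$ and $e^{\beta_1\xi}=(e^{\beta\xi})^{\beta_1/\beta}$ being algebraic), and then concludes exactly as you do, passing to a common refinement $\beta_2=\beta/n$ and factoring out the common root $e^{\beta_2\xi}$ of the two polynomials. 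Your substitute --- a second application of Schanuel showing that $V_\xi=\{\beta'\in\Qbar:\ e^{\beta'\xi}\in\Qbar\}$ has $\Q$-dimension at most $1$ --- is a valid and clean replacement: since Schanuel is assumed throughout the corollary, nothing is lost, and your version is insensitive to the case $e^{\beta\xi}=1$, which the power form of Gel'fond--Schneider used in the paper must exclude (the paper simply asserts $e^{\beta\xi}\neq 1$; this can be repaired by the logarithmic form of the theorem, since $\beta\xi\neq 0$). What the paper's route buys in exchange is that the commensurability step is unconditional, so that the conjectural input enters the proof only through Theorem~\ref{th3}.
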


\begin{proof}[Proof of Corollary~\ref{cordicho}] To begin with, let $\xi\in\C\etoile$ and $h,f\in\calP$ be such that $h(\xi)=f(\xi)=0$, with $h$ irreducible. Using Theorem~\ref{th10}, $\gcd(h,f)$ vanishes at $\xi$ so it is not a unit. Since $h$ is irreducible, it is $h$ (up to a unit) so that $h$ divides $f$ in $\calP$. This proves the second part. Moreover, if $f(x)=e^{\beta x}-c$ with $\beta,c\in\Qbar\etoile$ then $f$ is simple so $h$ cannot divide $f$ (by uniqueness in Ritt's factorization theorem): $\xi$ cannot be in both situations of Corollary~\ref{cordicho}. 

At last, if $\xi\in\C\etoile$ is a zero of some $f_0\in\calP\setminus\{0\}$ but of no irreducible $h\in\calP$, then $\xi$ a zero of a simple factor $g$ of $f_0$. Up to a unit, we can write $g(x)=P(e^{\beta x})$ for some $\beta\in\Qbar\etoile$ and $P\in\Qbar[X]\setminus\{0\}$. Therefore $e^{\beta \xi}$ is a zero of $P$: it is algebraic. If $f\in\calP$ is such that $f(\xi)=0$, there exists $\beta_1\in\Qbar\etoile$ and $P_1\in\Qbar[X]\setminus\{0\}$ such that 
 $g_1(x)=P_1(e^{\beta_1 x})$ divides $f$ in $\calP$ and vanishes at $\xi$. Both $e^{\beta \xi}$ and $e^{\beta_1 \xi } = ( e^{\beta \xi}) ^{\beta_1/\beta}$ are algebraic, and $ e^{\beta \xi}\neq 1$: the Gel'fond-Schneider theorem yields $ \beta_1/\beta\in\Q$. This provides $\beta_2\in\Qbar\etoile$ such that $\beta,\beta_1\in\beta_2\Z$. Up to multiplying $g$ and $g_1$ with suitable units, we may assume that $\beta=n\beta_2$ and $\beta_1=n_1\beta_2$ with $n,n_1\in\N\etoile$. Letting $Q(X)=P(X^n)$ and $Q_1(X)=P_1(X^{n_1})$ we have $g(x)=Q( e^{\beta_2x} )$ and $g_1(x)=Q_1( e^{\beta_2x} )$. Since $g(\xi)=g_1(\xi)=0$, the polynomials $Q$ and $Q_1$ have $ e^{\beta_2\xi}$ as a common root, so they are multiples of $X- e^{\beta_2\xi}$ in $\Qbar[X]$. Finally the simple function $ e^{\beta_2x}- e^{\beta_2\xi}$, with $\beta_2=\beta/n$, vanishes at $\xi$ and divides $f$. This concludes the proof of Corollary~\ref{cordicho}.
 \end{proof}

 \bigskip

\begin{proof}[Proof of Theorem~\ref{th3}] We denote by $\beta_1, \ldots, \beta_N$ the exponents in an expression \eqref{eqpolexp} of $f_1$, and by $\beta'_1, \ldots, \beta'_{N'}$ those for $f_2$. Let $W$ be the $\Z$-module generated by $\beta_1, \ldots, \beta_N, \beta'_1, \ldots, \beta'_{N'}$. There exists a $\Z$-basis $\alpha_1,\ldots,\alpha_p$ of $W$; all exponents $\beta_i,\beta'_j$ can be written as $\Z$-linear combinations of $\alpha_1,\ldots,\alpha_p$. Multiplying $f_1$ and $f_2$ with $e^{\gamma x}$ for a suitable $\gamma\in W\subset \Qbar$, we may assume that these linear combinations involve only non-negative coefficients. Then we have
$$
f_1(x) = P_1(x, e^{\alpha_1 x}, \ldots, e^{\alpha_p x}) \quad \mbox{and} \quad 
f_2(x) = P_2(x, e^{\alpha_1 x}, \ldots, e^{\alpha_p x})
$$
for some $P_1,P_2\in\Qbar[X_0,\ldots,X_p]\setminus\{0\}$. The complex numbers $\alpha_1\xi,\ldots,\alpha_p\xi$ are linearly independent over $\Q$ because $\alpha_1,\ldots,\alpha_p$ are, and $\xi\neq 0$; but they are linearly dependent over $\Qbar$. Schanuel's conjecture implies that 
$$
\Qbar(\alpha_1\xi,\ldots,\alpha_p\xi,e^{\alpha_1 \xi}, \ldots, e^{\alpha_p \xi}) = \Qbar( \xi,e^{\alpha_1 \xi}, \ldots, e^{\alpha_p \xi})
$$
has transcendence degree at least $p$ over $\Qbar$. In other words, letting 
$$
J = \{S\in \Qbar[X_0,\ldots,X_p], \quad S(\xi, e^{\alpha_1 \xi}, \ldots, e^{\alpha_p \xi})=0\},
$$
the zero set of $J$ in $\Qbar^{p+1}$ has dimension at least $p$; since $P_1$ and $P_2$ are non-zero and belong to $J$, this dimension is equal to $p$. Therefore $J$ is principal: there exists $P\in \Qbar[X_0,\ldots,X_p]\setminus\{0\}$ such that $J$ consists in all multiples of $P$, and accordingly there exist $T_1,T_2\in \Qbar[X_0,\ldots,X_p]$ such that $P_1=T_1P$ and $P_2=T_2P$. Letting 
$$
f(x) = P(x, e^{\alpha_1 x}, \ldots, e^{\alpha_p x}), \quad
\widetilde f_1(x) = T_1(x, e^{\alpha_1 x}, \ldots, e^{\alpha_p x}), \quad
\widetilde f_2(x) = T_2(x, e^{\alpha_1 x}, \ldots, e^{\alpha_p x}),
$$
we have $f(\xi)=0$, $f_1=f \widetilde f_1$ and $f_2=f \widetilde f_2$. This concludes the proof of Theorem~\ref{th3}. 
\end{proof}

\bigskip

To conclude this section we state the following well-known result, valid also in the complex setting. It follows 
immediately from \cite[Proposition~3.1.1]{bg}.

\begin{lem}
 \label{lemmultisple}
 Let $f_1,f_2$ be non-zero exponential polynomials such that $f_1f_2$ is simple. Then $f_1$ and $f_2$ are simple and $f_1$, $f_2$, $f_1f_2$ have the same support.
\end{lem}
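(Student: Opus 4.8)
The plan is to transport the whole problem from exponential polynomials to genuine Laurent polynomials, where ``simple'' becomes a purely combinatorial condition on a Newton polytope and the statement reduces to the classical multiplicativity of that polytope. First I would let $W\subset\Qbar$ be the $\Z$-module generated by all the frequencies occurring in $f_1$ and $f_2$. It is free of finite rank, so I fix a $\Z$-basis $\omega_1,\dots,\omega_k$; such a basis is automatically $\Q$-linearly independent (a nontrivial $\Q$-relation would clear denominators to a nontrivial $\Z$-relation). The functions $e^{\omega_1 x},\dots,e^{\omega_k x}$ are then algebraically independent over $\Qbar(x)$: distinct exponentials $e^{\lambda x}$ are linearly independent over $\Qbar(x)$, and $\Q$-independence of the $\omega_l$ ensures that distinct monomials produce distinct exponents. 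Hence the $\Qbar$-algebra morphism
$$\Phi\colon A:=\Qbar[x][T_1^{\pm1},\dots,T_k^{\pm1}]\longrightarrow \calP,\qquad x\mapsto x,\ \ T_l\mapsto e^{\omega_l x},$$
is injective and identifies the subring of exponential polynomials with frequencies in $W$ with the Laurent ring $A$. I write $f_i=\Phi(F_i)$ and $g:=f_1f_2=\Phi(F_1F_2)$, with $F_1,F_2\in A\setminus\{0\}$.

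Next I would translate the hypothesis. Writing $g=e^{\alpha x}\sum_{i=1}^N\lambda_i e^{\beta r_i x}$ with $N\ge 2$, $\beta\neq 0$, distinct $r_i\in\Q$ and $\lambda_i\in\Qbar\etoile$, the frequencies $\alpha+\beta r_i$ lie on a single affine line of $W$, and all coefficients $\lambda_i$ are constants. In the Laurent picture this says exactly that $G:=F_1F_2$ lies in $\Qbar[T_1^{\pm1},\dots,T_k^{\pm1}]$ (no dependence on $x$) and that the exponent vectors of its monomials lie on an affine line of primitive lattice direction $\mathbf{b}_0$, with at least two distinct such vectors; setting $\beta_0:=\sum_l(\mathbf{b}_0)_l\,\omega_l\in W$ one has $\Q\beta_0=\Q\beta=\mathrm{supp}(g)$.

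Then I would conclude inside $A$. Since $A=\Qbar[T^{\pm}][x]$ is a polynomial ring in $x$ over the domain $\Qbar[T^{\pm}]$, the $x$-degree is additive under multiplication, so $\deg_x F_1+\deg_x F_2=\deg_x G=0$ forces $F_1,F_2\in\Qbar[T^{\pm}]$; equivalently, $f_1$ and $f_2$ have constant coefficients. For the supports I would invoke the multiplicativity of the Newton polytope (Ostrowski's theorem), $\mathrm{Newt}(G)=\mathrm{Newt}(F_1)+\mathrm{Newt}(F_2)$. As $\mathrm{Newt}(G)$ is a non-degenerate segment of direction $\mathbf{b}_0$, and a Minkowski sum equal to a segment forces each summand to be a segment or a point parallel to it, the exponents of each $F_i$ lie on an affine line of direction $\mathbf{b}_0$; primitivity of $\mathbf{b}_0$ then gives $\R\mathbf{b}_0\cap\Z^k=\Z\mathbf{b}_0$, so the frequencies of $f_i$ differ by elements of $\Z\beta_0$ and $\mathrm{supp}(f_i)\subseteq\Q\beta_0=\mathrm{supp}(g)$. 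If $\mathrm{Newt}(F_i)$ is a genuine segment, then $F_i$ has at least two terms and $f_i$ is simple with exactly the support $\Q\beta$; the only other possibility, $\mathrm{Newt}(F_i)$ a single point, makes $F_i$ a monomial and $f_i$ a unit. This last (degenerate) case is the one I would set aside: the statement is meant for non-unit factors (which is what occurs in every application, e.g. when the factors vanish somewhere), and for such factors the argument above yields that both are simple with the common support.

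The main obstacle is the reduction carried out in the first two paragraphs: one must eliminate the possibly intricate $\Q$-linear relations among the frequencies $\beta_i$, as well as the polynomial coefficients, so that ``simple'' becomes the combinatorial condition ``supported on an affine line, with constant coefficients.'' Once the $\Z$-basis $\omega_1,\dots,\omega_k$ turns the exponentials into independent Laurent variables, everything rests on the no-cancellation-at-the-vertices principle, i.e. the multiplicativity of the Newton polytope, which is precisely what forces the extreme frequencies of a product to add without cancelling and hence each factor's frequencies to stay on a line parallel to that of the product.
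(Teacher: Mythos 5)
Your proof is correct, and it takes a genuinely different route from the paper, which offers no argument at all: the paper disposes of this lemma by citing Proposition~3.1.1 of Berenstein--Gay. You instead give a self-contained proof: identify the subring of exponential polynomials with frequencies in the finitely generated group $W$ with the Laurent ring $\Qbar[x][T_1^{\pm1},\dots,T_k^{\pm1}]$ (injectivity of $\Phi$ resting on the linear independence of distinct exponentials over $\Qbar(x)$), kill the polynomial coefficients of the factors by additivity of $\deg_x$ over the domain $\Qbar[T_1^{\pm1},\dots,T_k^{\pm1}]$, and confine the exponents of each factor to a line parallel to that of the product via multiplicativity of the Newton polytope (Ostrowski) together with the elementary fact that a Minkowski sum equal to a segment has summands that are points or parallel segments. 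Each of these steps is sound, and in substance this is the classical Ritt-type ``no cancellation at extreme exponents'' argument underlying the result the paper cites; what your write-up buys is transparency and self-containedness, at the cost of length.

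Concerning the caveat you flag at the end: you are right that, with the paper's definition of simple (which requires $N\geq 2$, so units are not simple), the statement read literally fails when a factor is a unit --- for instance $f_1(x)=e^{x}$ and $f_2(x)=e^{x}-1$ have simple product $e^{2x}-e^{x}$, yet $f_1$ is not simple. This is an imprecision in the paper's statement (its follow-up remark that all divisors in $\calP$ of a simple function are simple with the same support has the same defect, since units divide everything), not a gap in your argument: the paper only ever applies the lemma to factors that vanish somewhere, hence to non-units, which is exactly the case you treat. Moreover, your dichotomy on $\mathrm{Newt}(F_i)$ (segment versus point) already proves the clean version of the statement --- each $f_i$ is either a unit or simple with support equal to that of $f_1f_2$ --- so rather than ``setting aside'' the degenerate case, you should simply state this as the conclusion.
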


This lemma shows that if $f$ is simple, then all divisors of $f$ in $\calP$ are also simple with the same support.

\section{Factorization  of $E$-functions} \label{secstruct}

\newcommand{\supp}{{\rm supp}}
\newcommand{\Ecroix}{\calE^\times}
\newcommand{\oun}{{\rm ord}_1}

In this section we describe the (conjectural) structure of the ring $\calE$ of $E$-functions.

A {\em unit} $u\in\calE$ is an $E$-function such that $uv=1$ for some $v\in\calE$; we denote by $\Ecroix$ the set of units. Units are exactly non-vanishing $E$-functions, {\em i.e.}, the functions of the form $\lambda e^{\alpha x}$ with $\lambda,\alpha\in\Qbar$ and $\lambda\neq 0$ (using the Hadamard factorization theorem). 

An {\em irreducible} element is a non-unit $f\in\calE$ such that $f=gh$ with $g,h\in\calE$ implies that either $g$ or $h$ is a unit. We shall see in the proof of Proposition~\ref{propordreannul} below that if $f(x_0)=0$ for some $x_0\in\Qbar$, then $f$ is irreducible if and only if $f(x)=(x-x_0)u(x)$ for some unit $u$. In particular a polynomial $f\in\Qbar[x]$ is irreducible in $\calE$ if, and only if, it has degree one. We conjecture Bessel's $E$-functions $\Gamma(\alpha+1) x^{-\alpha} J_\alpha(x)$ to be irreducible when $\alpha\in\Q\setminus(\{\pm1/2\}\cup(-\N\etoile))$.  

The integral domain $\calE$ is not a factorial ring. Indeed, for any $N\geq 1$ we have 
\begin{equation}
 \label{eqnonfact} e^x-1 = \prod_{k=1}^N \Big( e^{x/N} - e^{2i\pi k/N}\Big)
\end{equation}
whereas in a factorial ring, no non-zero element can be written as a product of arbitrarily many non-units. 
In the spirit of \cite[Theorem~5]{factoreop} we conjecture that, as in the setting of exponential polynomials over $\Qbar$, this problem happens only with simple functions, defined as follows.

\begin{defi}
 A {\em simple} element of $\calE$ is an $E$-function of the form
 \begin{equation}
 \label{eqsimple}
 g(x)=x^{-\omega} e^{\alpha x} \sum_{i=1}^N \lambda_i e^{\beta r_i x}
 \end{equation}
with $\alpha,\beta\in\Qbar$, $\beta\neq0$, $N\geq 2$, $\lambda_1,\ldots,\lambda_N\in\Qbar\etoile$, and pairwise distinct rational numbers $r_1,\ldots,r_N$; the integer $\omega$ is chosen so that $f$ is holomorphic at $0$ and $g(0)\neq0$. The {\em support} of $g$, denoted by $\supp(g)$, is the $1$-dimensional vector space
 $${\rm Span}_{\Q}(\beta) = \{\beta r, \, r\in\Q\}\subset \Qbar$$
 which depends only on $g$ and not on the choice of $\alpha,\beta$, $\lambda_1,\ldots,\lambda_N$, $r_1,\ldots,r_N$.
\end{defi}

We point out that this definition is slightly different from the usual one in the setting of exponential polynomials over $\Qbar$ (which amounts to taking $\omega=0$ in Eq. \eqref{eqsimple}). For instance $\frac{\sin(x)}{x}$ (which is an $E$-function but not an exponential polynomial) is simple, whereas $\sin(x)$ is not, with our definition. This modification is necessary for Conjecture~\ref{conjfact} below to be reasonable (otherwise, conjecturally $\frac{\sin(x)}{x}$ would have no factorization).

An  $E$-function is simple if, and only if, it is not a unit and it can be written as 
$$g(x) = x^{-\omega} e^{\alpha x} \sum_{i=1}^N \lambda_i e^{\beta_i x} $$
with $N\geq 2$, $\alpha, \lambda_1,\ldots,\lambda_N\in\Qbar$, $\beta_1,\ldots,\beta_N\in\Qbar$ such that $\beta_i/\beta_j\in\Q$ for any $i,j$ such that $\beta_j\neq 0$, and $\omega\in\N$ such that $g(0)\neq 0$. With this notation, the support of $g$ is $\Span_\Q(\beta_i)$, for any $i$ such that $\beta_i\neq 0$. 

We shall use the following characterization very often; here $\oun P$ is the order of multiplicity of $1$ as a root of $P$.

\begin{lem}
 \label{lemsplebis} A function $g$ is a simple $E$-function if, and only if, it can be written as 
 \begin{equation}
 \label{eqsple2} g(x) = x^{-\oun P} u(x) P(e^{\beta x}) 
 \end{equation}
with $\beta\in\Qbar\etoile$, a unit $u\in\Ecroix$, and $P\in\Qbar[X]$ such that $P(0)\neq 0$. Moreover, we have $\supp(g)=\Span_\Q(\beta)$.
\end{lem}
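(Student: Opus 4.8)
The plan is to prove Lemma~\ref{lemsplebis} by showing the two directions of the equivalence, relying throughout on the definition of a simple $E$-function and on the characterization given just above it (the one involving $\beta_i/\beta_j\in\Q$). The technical core is a bookkeeping computation relating the rational exponents $r_i$ to the roots of a polynomial $P$, together with a careful treatment of the $x^{-\omega}$ and $x^{-\oun P}$ factors.

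For the forward direction, I would start from a simple $E$-function written as in Eq.~\eqref{eqsimple}, namely $g(x)=x^{-\omega} e^{\alpha x} \sum_{i=1}^N \lambda_i e^{\beta r_i x}$ with pairwise distinct rationals $r_i$. I would clear denominators: writing each $r_i=s_i/d$ with a common denominator $d\in\N\etoile$ and setting $\beta'=\beta/d$, the function $\sum_i \lambda_i e^{\beta r_i x}=\sum_i \lambda_i (e^{\beta' x})^{s_i}$ becomes a Laurent polynomial in $e^{\beta' x}$. After multiplying by a unit $e^{\gamma x}$ to shift all exponents $s_i$ to be non-negative with at least one equal to $0$, this becomes a genuine polynomial $Q(e^{\beta' x})$ with $Q(0)\neq 0$. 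I then factor out the appropriate power of $(e^{\beta' x}-1)$ corresponding to the multiplicity $\oun Q$ of $1$ as a root of $Q$; since $e^{\beta' x}-1$ vanishes to order exactly one at $x=0$, this isolates the vanishing of $g$ at the origin and lets me match it against the prescribed $x^{-\oun P}$. Absorbing the leftover exponential and scalar factors into a unit $u$ produces the desired form \eqref{eqsple2}, with $P=Q$ up to normalization and $\beta$ renamed to $\beta'$. The support computation is immediate since $\Span_\Q(\beta')=\Span_\Q(\beta)$.

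For the reverse direction, I would start from $g(x)=x^{-\oun P} u(x) P(e^{\beta x})$ with $u$ a unit and $P(0)\neq 0$, and expand $P(e^{\beta x})=\sum_{j} c_j e^{j\beta x}$ over the monomials of $P$. Writing $u(x)=\lambda e^{\alpha x}$, this immediately exhibits $g$ as $x^{-\oun P} e^{\alpha x}\sum_j \lambda c_j e^{j\beta x}$, i.e. a function of the shape required in the characterization preceding the lemma (all exponents $j\beta$ are rational multiples of $\beta$). The two points needing care are: first, that $g$ is genuinely holomorphic and non-zero at $0$, which forces the exponent of $x$ to be exactly $-\oun P$ — this is exactly why the factor $x^{-\oun P}$ is calibrated to the multiplicity of $1$ as a root of $P$, since $P(e^{\beta x})$ vanishes at $x=0$ to order $\oun P$; and second, that $g$ is not a unit, which holds because $N\geq 2$ distinct exponents appear (as $P(0)\neq 0$ and $P$ is non-constant when $g$ is a non-unit). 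Finally $\supp(g)=\Span_\Q(\beta)$ by definition.

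The main obstacle is the careful reconciliation of the normalizing factors $x^{-\omega}$ (in \eqref{eqsimple}) and $x^{-\oun P}$ (in \eqref{eqsple2}): one must verify that the vanishing order of $\sum_i\lambda_i e^{\beta r_i x}$ at the origin, after clearing denominators, coincides exactly with $\oun P$, so that the two normalizations agree and $g(0)\neq 0$ is preserved in both descriptions. This hinges on the elementary fact that $e^{\beta' x}-1$ has a simple zero at $x=0$, so that the multiplicity of $1$ as a root of $Q$ transfers verbatim to the vanishing order of $Q(e^{\beta' x})$ at the origin. Once this order-matching is pinned down, the remaining manipulations — absorbing exponentials and constants into units, and identifying the supports — are routine.
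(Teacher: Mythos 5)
Your proof is correct and essentially identical to the paper's: both arguments clear denominators in the exponents, absorb a shift into the unit so the exponential sum becomes $P(e^{\beta' x})$ with $P(0)\neq 0$, factor $P(X)=(X-1)^{\oun P}Q(X)$ with $Q(1)\neq 0$, and use the simple zero of $e^{\beta' x}-1$ at the origin (together with the requirement that a simple function is holomorphic and non-vanishing at $0$) to force $\omega=\oun P$; the paper merely performs the exponent shift before clearing denominators rather than after, and leaves the converse as ``along the same lines,'' which you write out explicitly. One caveat, which concerns the lemma's statement more than your argument: if $P$ is a non-zero constant then \eqref{eqsple2} defines a unit, not a simple function, so the ``if'' direction implicitly requires $P$ to be non-constant --- your parenthetical justification (``$P$ is non-constant when $g$ is a non-unit'') is circular as written, and should be replaced by simply adding that hypothesis.
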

We point out that this expression is not unique: for instance $e^{\beta x}=(e^{\beta x/N})^N$ so that $\beta$ and $P(X)$ can be replaced with $\beta/N$ and $P(X^N)$, for any $N\geq 1$. Given finitely many simple functions with the same support, this remark enables one to write them as \eqref{eqsple2} with the same $\beta$. Indeed given finitely many elements $\beta_i \in V\setminus\{0\}$, where $V$ is a $1$-dimensional vector space over $\Q$, there exists $\beta \in V\setminus\{0\}$ such that all $\beta_i$ are integer multiples of $\beta$ (see the proof of Corollary~\ref{cordicho} in \S \ref{secpolexp}). 

\bigskip

\begin{proof}[Proof of Lemma~\ref{lemsplebis}] Let $g$ be a simple function, written as \eqref{eqsimple}. Modifying $\alpha$ if necessary, we may assume that $r_i\geq 0$ for any $i$, with equality for an index $i$. Let $D$ be a common denominator of the rationals $r_i$; replacing $\beta$ with $\beta/D$ we may assume that $D=1$, so that $r_i\in\N$ for any $i$. Then letting $u(x)=e^{\alpha x}$ and $P(X)=\sum_{i=1}^N \lambda_i X^{r_i}\in\Qbar[X]$, we have $P(0)\neq 0$ (because $\lambda_i\neq 0$ for all $i$, and $r_i=0$ for some $i$), and $g(x) = x^{-\omega} u(x) P(e^{\beta x})$. We may write $P(X) = (X-1)^{\oun P}Q(X)$ with $Q(X)\in\Qbar[X]$ such that $Q(1)\neq 0$. Then we have $g(x) = x^{-\omega} u(x) (e^{\beta x}-1)^{\oun P} Q(e^{\beta x})$, and the function $Q(e^{\beta x})$ does not vanish at $x=0$. Now both $g(x)$ and $\frac{e^{\beta x}-1}{x}$ are holomorphic and do not vanish at $0$, so that $\omega=\oun P$ and $g$ is of the form \eqref{eqsple2}. The converse can be proved easily along the same lines. This concludes the proof of Lemma~\ref{lemsplebis}. 
\end{proof}

\begin{defi}
 A non-zero element $f\in\calE$ is said to be {\em normalized} if its Taylor expansion $\sum_{n=0}^\infty a_n x^n$ around the origin satisfies 
 $$a_0=\ldots=a_{p-1}=0, \quad a_p=1, \quad a_{p+1}=0$$
 for some $p\geq 0$.   
\end{defi}
In other words, $f$ is normalized if, and only if,  its first non-zero Taylor coefficient at the origin is equal to $1$, and the next one vanishes.
The point is that for any non-zero $f\in\calE$, there exists a unique $u\in\Ecroix$ such that $fu$ is normalized. Moreover, any product of normalized $E$-functions is again normalized. This allows one to have really equalities between functions, not only ``up to units''. We conjecture that the following analogue of Ritt's factorization theorem holds. 

\begin{conj}
 \label{conjfact} Let $f$ be a non-zero $E$-function. Then there exist a unit $u\in\Ecroix$, simple normalized $E$-functions $s_1,\ldots,s_p$ with pairwise distinct supports (where $p\geq 0$), and irreducible normalized $E$-functions $h_1,\ldots,h_n $ (with $n\geq 0$), such that 
 \begin{equation}
 \label{eqdcp1}
 f=u s_1\ldots s_p h_1\ldots h_n.
 \end{equation}
 Moreover $u, s_1,\ldots ,s_p, h_1,\ldots, h_n$ are unique.
\end{conj}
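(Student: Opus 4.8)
The plan is to mimic Ritt's factorization theorem for exponential polynomials, replacing the Newton-polygon combinatorics by the structure theory of zeros of $E$-functions. The argument is necessarily conditional: it rests on the conjectural extensions to $\calE$ of the gcd and division results that, for $\calP$, are Theorem~\ref{th3}, Corollary~\ref{cordicho} and Theorem~\ref{theoLambert} (proved under Schanuel's conjecture). Concretely, I assume (a) the dichotomy of Corollary~\ref{cordicho} for each nonzero zero $\xi$ of an $E$-function: either $\xi$ is of \emph{simple type}, meaning $e^{\beta\xi}\in\Qbar$ for some $\beta\in\Qbar\etoile$, or it is of \emph{irreducible type}, meaning $h(\xi)=0$ for some irreducible $h\in\calE$, which then divides every $E$-function vanishing at $\xi$; and (b) that common zeros of two $E$-functions are explained by a common factor. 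First I would reduce to the case where $f$ is normalized: since a product of normalized functions is normalized and the units are exactly the $\lambda e^{\alpha x}$, the factor $u$ in \eqref{eqdcp1} must be the unique element of $\Ecroix$ for which $u^{-1}f$ is normalized, so it suffices to factor a normalized $f$ into normalized simple and irreducible functions.

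For existence I would sort the zeros of $f$ by the dichotomy (a). The simple-type zeros lie along arithmetic progressions determined by their support $\Span_\Q(\beta)$; for each of the $1$-dimensional $\Q$-subspaces $V\subset\Qbar$ arising this way, the assumed gcd theory yields a simple normalized $E$-function $s_V$ of support $V$, written via Lemma~\ref{lemsplebis} as $x^{-\oun P}u(x)P(e^{\beta x})$, that captures exactly these zeros with their multiplicities and divides $f$. After dividing out $s_1\cdots s_p$, the remaining function has only irreducible-type zeros; each such $\xi$ is a zero of an irreducible $h$ which, by (a), divides the quotient, so I peel off irreducible normalized factors one at a time. Finiteness of both lists is the delicate point, and I would derive it from the bound noted after Conjecture~\ref{conj1intro} (away from the finitely many singularities, all zeros of $f$ have multiplicity less than the order of an annihilating operator), together with an order/degree count limiting how many irreducible factors can be extracted.

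Uniqueness would follow from the division property in (a). Given two factorizations, any irreducible factor $h_j$ has an irreducible-type zero $\xi_j$; by the dichotomy no simple factor of the other decomposition vanishes at $\xi_j$, so some irreducible $h'_k$ does, whence $h_j\mid h'_k$ and, by irreducibility of $h'_k$, they agree up to a unit, hence are equal after normalization. Cancelling the irreducible parts leaves, on each side, a product of simple functions with pairwise distinct supports; these are pinned down support by support by the zeros they must carry (using Lemma~\ref{lemmultisple} to control how a simple factor sits inside a product of its own support), giving $p=p'$ and $s_i=s'_i$ after reindexing, and finally $u=u'$.

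The main obstacle is that the whole argument is conditional: the gcd and division theory for \emph{general} $E$-functions is precisely the content of the conjectural $\calE$-analogues of Theorems~\ref{th3} and~\ref{theoLambert}, and these are not available even under Schanuel's conjecture — the proof of Theorem~\ref{th3} crucially uses that an exponential polynomial is a genuine polynomial in $x,e^{\alpha_1 x},\ldots,e^{\alpha_p x}$, a finiteness with no evident counterpart for an arbitrary $E$-function. Even granting those inputs, proving finiteness of the number of irreducible factors — that is, excluding the infinite divisibility exhibited by \eqref{eqnonfact} for anything other than simple functions — is where I expect the real work to concentrate.
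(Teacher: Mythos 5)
This statement is Conjecture~\ref{conjfact} of the paper: the authors do not prove it. They propose it as an open analogue of Ritt's factorization theorem, and the only supporting evidence is the exponential-polynomial case (Ritt--MacColl) together with the consequences they derive from the conjecture later in \S\ref{secstruct} and \S\ref{seczerosE}. So there is no proof in the paper to compare yours against, and your text --- as you yourself concede --- is a conditional plan rather than a proof.

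Moreover, the plan is circular relative to the paper's logical architecture. The inputs you assume --- a gcd theory on $\calE$, the division property that common zeros of $E$-functions are explained by a common factor (the $\calE$-analogue of Theorem~\ref{th3}), and the dichotomy of zeros into simple type and irreducible type (the $\calE$-analogue of Corollary~\ref{cordicho}) --- are exactly what the paper \emph{derives from} Conjecture~\ref{conjfact}, together with Conjectures~\ref{conjirred} and~\ref{conjzerolog}. The gcd of two $E$-functions is \emph{defined} (Definition~\ref{defgcd}) through the decomposition \eqref{eqdcp2}, i.e.\ through the very factorization you are trying to construct; the $h$-adic valuations likewise; the division criterion is Proposition~\ref{propdiviE}, proved under Conjecture~\ref{conjfact}; and the dichotomy you call (a) is Theorem~\ref{th2}, again conditional on Conjecture~\ref{conjfact}. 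Schanuel's conjecture yields such tools only inside $\calP$, where Ritt--MacColl factorization is already a theorem; for general $E$-functions there is no independent route to them, so assuming them begs the question. Finally, even granting those inputs, the two places where all the difficulty sits remain open in your sketch: (i) extracting from $f$ a simple divisor carrying exactly the simple-type zeros of a given support (the zeros of an arbitrary $E$-function do not obviously come packaged as the zero set of some $P(e^{\beta x})$ unless one already has a factorization in hand), and (ii) termination of the peeling process, which cannot follow from a naive order or degree count, since \eqref{eqnonfact} shows that a fixed $E$-function satisfying a first-order equation can be written as a product of arbitrarily many non-units; ruling out such behaviour for anything other than simple functions is precisely the content of the conjecture, not a lemma one can invoke on the way to it.
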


To state Conjecture~\ref{conjfact} in a different way, we denote by $\calV$ the set of all $\Q$-vector spaces of dimension 1 contained in $\Qbar$, and by $\calI$ the set of all normalized irreducible $E$-functions.

\begin{prop}
 \label{propequivdcp} Conjecture~\ref{conjfact} is equivalent to the following statement. 

 Let $f$ be a non-zero $E$-function. There exist a unit $u$, a function $s_V$ which is either equal to 1 or simple normalized with support $ V$ (for each $V\in\calV$), and a non-negative integer $n_h$ (for each $h\in\calI$), such that 
 \begin{equation}
 \label{eqdcp2} f=u\Big( \prod_{V\in \calV} s_V\Big)\Big( \prod_{h\in \calI} h^{n_h}\Big),
 \end{equation}
 with $s_V=1$ for all but finitely many $V\in\calV$, and $n_h=0$ for all but finitely many $h\in\calI$. 

 Moreover $u$, $(s_V)_{V\in\calV}$ and $(n_h)_{h\in\calI}$ are uniquely determined by $f$.
 \end{prop}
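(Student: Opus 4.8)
The plan is to show that the two factorizations carry exactly the same information, merely indexed differently, so that both the existence and the uniqueness clauses of Conjecture~\ref{conjfact} translate verbatim into those of Proposition~\ref{propequivdcp} and conversely. Concretely, I would set up a \emph{product-preserving bijection} between ``list data'' $(u,s_1,\ldots,s_p,h_1,\ldots,h_n)$ as in \eqref{eqdcp1} and ``$\calV$-indexed data'' $(u,(s_V)_{V\in\calV},(n_h)_{h\in\calI})$ as in \eqref{eqdcp2}, and check that under this bijection the right-hand sides of \eqref{eqdcp1} and \eqref{eqdcp2} coincide. Once this is done, the equivalence is immediate: for each $f$, existence of one shape of factorization is equivalent to existence of the other, and the uniqueness of one is equivalent to the uniqueness of the other.

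For the simple factors I first note that a simple normalized $E$-function $s$ has a well-defined support $\supp(s)\in\calV$ (by the definition of simple $E$-functions). Given list data with $s_1,\ldots,s_p$ of pairwise distinct supports, I set $s_V:=s_i$ when $V=\supp(s_i)$ for the (necessarily unique) index $i$, and $s_V:=1$ otherwise; then only finitely many $s_V$ differ from $1$, each nontrivial $s_V$ is simple normalized with support $V$, and $\prod_{V\in\calV}s_V=s_1\cdots s_p$ as a finite product. Conversely a family $(s_V)$ with finitely many nontrivial terms returns the list of its nontrivial values, which have pairwise distinct supports; this gives a bijection between finite sets of simple normalized functions with pairwise distinct supports and such families $(s_V)$. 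For the irreducible factors I encode the finite list $h_1,\ldots,h_n\in\calI$ (repetitions allowed) by its multiplicity function $n_h:=\#\{\,i:h_i=h\,\}$; this is a bijection between finite multisets of elements of $\calI$ and finitely supported functions $\calI\to\N$, and $\prod_{h\in\calI}h^{n_h}=h_1\cdots h_n$. Combining the two, the two products agree, so $f$ admits a factorization of the shape \eqref{eqdcp1} if and only if it admits one of the shape \eqref{eqdcp2}.

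It then remains to transfer uniqueness. Here I would read the uniqueness in Conjecture~\ref{conjfact} as uniqueness up to reordering the $s_i$ among themselves and the $h_j$ among themselves (the only possible ambiguity, since the products are commutative); equivalently, $u$ is unique, the set $\{s_1,\ldots,s_p\}$ is unique, and the multiset $\{\!\{h_1,\ldots,h_n\}\!\}$ is unique. Under the bijection above, the set $\{s_i\}$ corresponds exactly to the family $(s_V)$ (a nontrivial $s_V$ being recovered as the value at its own support, while distinct supports prevent any collision), and the multiset $\{\!\{h_j\}\!\}$ corresponds exactly to the family $(n_h)$. Hence uniqueness of $(u,\{s_i\},\{\!\{h_j\}\!\})$ is equivalent to uniqueness of $(u,(s_V)_V,(n_h)_h)$, which is precisely the uniqueness clause of Proposition~\ref{propequivdcp}. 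I expect no analytic difficulty in this argument; the only point requiring care---and the closest thing to an obstacle---is the bookkeeping around uniqueness, namely spelling out that ``uniqueness up to reordering'' is the correct interpretation of \eqref{eqdcp1} and that the support map makes the reindexing of the simple factors well defined and injective on its nontrivial terms.
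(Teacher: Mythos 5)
Your proposal is correct and takes essentially the same approach as the paper: the paper's (very short) proof likewise observes that decompositions \eqref{eqdcp1} and \eqref{eqdcp2} carry the same data, with $n_h$ the number of indices $i$ such that $h_i=h$, and $s_V=s_i$ for the unique $i$ with support $V$ (and $s_V=1$ otherwise). Your write-up merely makes the bookkeeping of this correspondence, and the transfer of the uniqueness clause, more explicit than the paper does.
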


We notice that in the products of Eq. \eqref{eqdcp2}, all factors are equal to the constant function $1$ except finitely many of them.

The proof of Proposition~\ref{propequivdcp}
 is straightforward since decompositions \eqref{eqdcp1} and \eqref{eqdcp2} are equivalent. Indeed for $h\in\calI$, $n_h$ is the number of $i$ such that $h_i=h$; and for $V\in\calV$, $s_V=s_i$ if there is a (necessarily unique) index $i$ such that $\supp( s_i)=V$, and $s_V=1$ otherwise.

 \bigskip

 {\bf Until the end of \S \ref{secstruct}, we assume that Conjecture~\ref{conjfact} holds.}
 
 \begin{defi}
 Let $f$ be a non-zero $E$-function, and $h\in\calI$. The {\em $h$-adic valuation of $f$}, denoted by $v_h(f)$, is the exponent $n_h$ of $h$ in the decomposition \eqref{eqdcp2} of $f$. \end{defi}

Let $x_0\in\Qbar$. Then $x-x_0$ is an irreducible $E$-function. Indeed, if $x-x_0 = f_1f_2$ with $f_1,f_2\in\calE$, then up to swapping $f_1$ and $f_2$ we may assume that $f_1(x_0)=0$; then \cite[Proposition~4.1]{beukers} yields $f_3\in\calE$ such that $f_1(x)=(x-x_0)f_3(x)$, and therefore $f_2f_3=1$ so that $f_2$ is a unit. Now consider the $E$-function $h_{x_0}$ defined by
$h_0(x)=x$ and, if $x_0\neq 0$, $h_{x_0}(x)=\frac{-1}{x_0}e^{x/x_0}(x-x_0)=\Big(1-\frac{x}{x_0}\Big)e^{x/x_0}$. Then $h_{x_0}$ is irreducible too, and it is normalized so that $h_{x_0}\in\calI$.

\begin{prop}
 \label{propordreannul} Let $x_0\in\Qbar$, and $f$ be a non-zero $E$-function. Then $v_{h_{x_0}}(f)$ is the order of vanishing of $f$ at $x_0$, where $h_0(x)=x$ and, if $x_0\neq 0$, 
 $$h_{x_0}(x)= \Big(1-\frac{x}{x_0}\Big)e^{x/x_0}.$$
\end{prop}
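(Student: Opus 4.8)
The plan is to compute the order of vanishing of $f$ at $x_0$ directly from the factorization provided by Conjecture~\ref{conjfact}, using that the order of vanishing at a fixed point is additive over products of holomorphic functions. Writing $f = u\, s_1\cdots s_p\, h_1\cdots h_n$ as in \eqref{eqdcp1}, and denoting by $\ord_{x_0}$ the order of vanishing at $x_0$, I would first record that
$$\ord_{x_0}(f) = \ord_{x_0}(u) + \sum_{i=1}^p \ord_{x_0}(s_i) + \sum_{j=1}^n \ord_{x_0}(h_j),$$
so that it suffices to evaluate each term. The three families (units, simple, irreducible) are treated separately, and the proposition follows by identifying which of them can vanish at $x_0$.

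Second, I would dispose of the units and the simple factors. A unit is non-vanishing, so $\ord_{x_0}(u)=0$. For a simple factor, $s_i(0)\neq 0$ by definition of a simple $E$-function, so the case $x_0=0$ is settled; for $x_0\in\Qbar\etoile$, Lemma~\ref{lemsplebis} lets me write $s_i(x)=x^{-\oun P}v(x)P(e^{\beta x})$ with $\beta\in\Qbar\etoile$, $v$ a unit and $P\in\Qbar[X]$, $P(0)\neq 0$, so that vanishing at $x_0$ would force $P(e^{\beta x_0})=0$, i.e. $e^{\beta x_0}\in\Qbar$. This is impossible since $\beta x_0\in\Qbar\etoile$ while $e^{\gamma}$ is transcendental for every $\gamma\in\Qbar\etoile$ (Hermite--Lindemann). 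Hence $\ord_{x_0}(s_i)=0$ for all $i$. This transcendence input is the only genuinely non-formal point of the argument, and I expect it to be the main obstacle, the rest being bookkeeping around the factorization.

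Third, I would analyse the irreducible factors. The key lemma is that any irreducible $E$-function vanishing at a point $x_0\in\Qbar$ equals $(x-x_0)$ times a unit: indeed $x-x_0$ is a non-unit (it vanishes at $x_0$), and \cite[Proposition~4.1]{beukers} gives $f_3\in\calE$ with $h_j = (x-x_0)f_3$; irreducibility of $h_j$ then forces $f_3$ to be a unit. Consequently an irreducible $h_j$ vanishes at $x_0$ if and only if it is, up to a unit, equal to $(x-x_0)$, and it then has a simple zero there. Among normalized irreducibles this singles out a unique one, which I would check is exactly $h_{x_0}$: for $x_0=0$ one has $h_0(x)=x$, and for $x_0\neq 0$ the Taylor expansion of $h_{x_0}(x)=(1-x/x_0)e^{x/x_0}$ begins $1+0\cdot x-\tfrac{1}{2x_0^2}x^2+\cdots$, so $h_{x_0}$ is normalized and vanishes to order $1$ at $x_0$. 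Therefore $\ord_{x_0}(h_j)=1$ if $h_j=h_{x_0}$ and $\ord_{x_0}(h_j)=0$ otherwise.

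Combining the three steps, $\ord_{x_0}(f)$ equals the number of indices $j$ with $h_j=h_{x_0}$, which is precisely the exponent $n_{h_{x_0}}$ of $h_{x_0}$ in the decomposition \eqref{eqdcp2} (by Proposition~\ref{propequivdcp}), that is $v_{h_{x_0}}(f)$. This yields the proposition, and along the way establishes the characterization of irreducible $E$-functions vanishing at an algebraic point announced before the statement.
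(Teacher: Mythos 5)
Your proof is correct and follows essentially the same route as the paper's: simple factors cannot vanish at $x_0\in\Qbar$ by Lemma~\ref{lemsplebis} and Hermite--Lindemann, irreducible factors vanishing at $x_0$ are forced to be $(x-x_0)$ times a unit via \cite[Proposition~4.1]{beukers} and hence equal $h_{x_0}$ by normalization, and the valuation is then read off the decomposition. The only differences are cosmetic (you make the additivity of $\ord_{x_0}$ explicit and verify that $h_{x_0}$ is normalized by expanding its Taylor series, where the paper instead solves for the unit's parameters).
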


We point out that Proposition~\ref{propordreannul} applies to any $x_0\in\Qbar$, including $x_0=0$, whereas the corresponding statement with exponential polynomials would be false for $x_0=0$: indeed $e^x-1$ vanishes at $0$ but is not divisible by 
$x$ in the ring of exponential polynomials.

\bigskip

\begin{proof}[Proof of Proposition~\ref{propordreannul}] Let $x_0$ be an algebraic number. First of all, we claim that if $g$ is simple then $g(x_0)\neq 0$. This is part of the definition of a simple function if $x_0=0$. Otherwise, Lemma~\ref{lemsplebis} yields $g(x)=x^{-\oun P}u(x) P(e^{\beta x})$ for some non-zero $\beta\in\supp(g)\subset\Qbar$, $u\in\Ecroix$ and $P\in\Qbar[X]$. Now $e^{\beta x_0}$ is transcendental due to the Hermite-Lindemann Theorem, and $P\neq 0$, so that $P(e^{\beta x_0})$ is non-zero and finally $g(x_0)\neq 0$.

Now let us prove that if $h\in\calI$ is such that $h(x_0)=0$, then $h=h_{x_0}$. Indeed, using \cite[Proposition~4.1]{beukers}, $h$ can be written as $(x-x_0)h_1(x)$ with $h_1\in\calE$. Since $x-x_0$ vanishes at $x_0$, it is not a unit. Now $h$ is irreducible, so that $h_1$ is a unit: we have $h(x)=\lambda e^{\alpha x} (x-x_0)$ for some $\lambda,\alpha\in\Qbar$ with $\lambda\neq0$. To conclude we recall that $h$ is normalized. If $x_0=0$ then $h(0)=0$ and $h'(0)=\lambda$ so that $\lambda=1$ and $\alpha=0$: we have $h(x)=x=h_0(x)$. On the contrary, if $x_0\neq 0$ then $h(0)=-\lambda x_0$ so that $\lambda=-1/x_0$ and $\alpha=1/x_0$. This concludes the proof that if $h\in\calI$ is such that $h(x_0)=0$, then $h=h_{x_0}$. 

To conclude the proof of Proposition~\ref{propordreannul}, we notice that in Eq. \eqref{eqdcp2} no factor on the right-hand side vanishes at $x_0$, except the one that corresponds to $h_{x_0}$.
\end{proof}
\bigskip

Recall that {\em $g$ divides $f$} (with $f,g\in\calE$) means that $f=gg_1$ for some $g_1\in\calE$; Jossen's conjecture asserts that this is equivalent to $f/g$ being entire. We shall explain now how to translate the property that $g$ divides $f$ in terms of the decompositions \eqref{eqdcp2} of $f$ and $g$.

Proposition~\ref{propordreannul} shows that $h_{x_0}$ divides a non-zero $f\in\calE$ if, and only if, $f(x_0)=0$. Indeed a normalized irreducible $E$-function $h$ divides $f\in\calE\setminus\{0\}$ if, and only if, $v_h(f)\geq 1$: this follows at once from the unique decomposition \eqref{eqdcp2}.

Given simple functions $g_1, g_2$ with the same support $V\in\calV$, using Lemma~\ref{lemsplebis} and the remark following it, we write 
$$ g_1(x) = x^{-\oun P_1} u_1(x) P_1(e^{\beta x}) \quad \mbox{and} \quad 
 g_2(x) = x^{-\oun P_2} u_2(x) P_2(e^{\beta x})$$
with $\beta\in V \setminus\{0\}\subset \Qbar\etoile$, units $u_1,u_2\in\Ecroix$, and $P_1,P_2\in\Qbar[X]$ such that $P_1(0)\neq 0$ and $P_2(0)\neq 0$. We claim that $g_1$ divides $g_2$ in $\calE$ if, and only if, $P_1$ divides $P_2$ in $\Qbar[X]$. If $P_1$ divides $P_2$ this is clear. Otherwise, upon dividing $P_1$ and $P_2$ by their gcd we may assume they are coprime, and that $P_1$ is non constant. Then $P_1$ has a root $y_1\in\C$, which is not a root of $P_2$. Since $P_1(0)\neq 0$ we have $y_1\neq 0$: there exists $x_1\in\C$ such that $e^{\beta x_1}=y_1$. If $x_1=0$ then we may also choose $x_1=2i\pi/\beta$, so we assume $x_1\neq 0$. Then $g_1(x_1)=0$ and $g_2(x_1)\neq0$, so that $g_1$ does not divide $g_2$ in $\calE$ (recall that $E$-functions are entire, and therefore holomorphic at $x_1$). This enables us to understand divisibility amongst simple functions with the same support. To understand divisibility in $\calE$ we need the following definition.

\begin{defi} \label{defisimplepart}
 Let $f$ be a non-zero $E$-function, and $V\in \calV$. The {\em simple part with support $V$} of $f$, denoted by $s_V(f)$, is the function $s_V$ in the decomposition \eqref{eqdcp2} of $f$. 
\end{defi}

Notice there is a slight abuse in this terminology: if $s_V(f)=1$, it is not simple and therefore has no support. We call it the simple part with support $V$ of $f$ anyway.

With this definition we have the following result.

 \begin{prop}
 \label{propdiviE} Let $f_1,f_2$ be non-zero $E$-functions. Then $f_1$ divides $f_2$ in $\calE$ if, and only if, the following properties hold:
 \begin{itemize}
 \item For any $V\in\calV$, $s_V(f_1)$ divides $s_V(f_2)$.
 \item For any $h\in\calI$, $v_h(f_1)\leq v_h(f_2)$.
 \end{itemize} 
 \end{prop}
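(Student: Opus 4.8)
The plan is to prove Proposition~\ref{propdiviE} by reducing divisibility in $\calE$ to the combinatorial data provided by Conjecture~\ref{conjfact}, namely the simple parts $s_V(f)$ and the valuations $v_h(f)$. The forward direction is the easy one: if $f_1$ divides $f_2$, say $f_2=f_1 g$ for some $g\in\calE$, I would apply the unique factorization of Conjecture~\ref{conjfact} to all three functions and compare. Writing each of $f_1$, $g$, $f_2$ in the form \eqref{eqdcp2}, the product $f_1 g$ must have $h$-adic valuation $v_h(f_1)+v_h(g)\geq v_h(f_1)$ for every $h\in\calI$, which forces $v_h(f_1)\leq v_h(f_2)$; and for each support $V$, the simple part $s_V(f_2)$ is (up to a unit) the product $s_V(f_1)\, s_V(g)$, so $s_V(f_1)$ divides $s_V(f_2)$.

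The converse requires assembling a quotient. Assuming both bulleted conditions hold, I would like to produce $g\in\calE$ with $f_2=f_1 g$. The idea is to build $g$ support-by-support and $h$-by-$h$ from the decompositions. For each $V\in\calV$, the hypothesis that $s_V(f_1)$ divides $s_V(f_2)$ yields, via the divisibility criterion for simple functions with the same support established just before this Proposition, a polynomial quotient; concretely, writing $s_V(f_1)(x)=x^{-\oun P_1}u_1(x)P_1(e^{\beta x})$ and $s_V(f_2)(x)=x^{-\oun P_2}u_2(x)P_2(e^{\beta x})$ with a common $\beta$, the divisibility $P_1\mid P_2$ in $\Qbar[X]$ produces $P_2/P_1\in\Qbar[X]$, from which one reads off a factor that is either a unit or simple with support $V$. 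For each $h\in\calI$ one contributes $h^{v_h(f_2)-v_h(f_1)}$, which is a legitimate non-negative power since $v_h(f_1)\leq v_h(f_2)$. Collecting these factors together with an appropriate unit and invoking Conjecture~\ref{conjfact} to guarantee they multiply to a genuine $E$-function, I would obtain the required $g$ and verify $f_1 g=f_2$ by comparing the (unique) decompositions \eqref{eqdcp2} of both sides.

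The main obstacle I anticipate is bookkeeping in the simple-part step rather than any deep difficulty. One has to be careful that the quotient polynomial $P_2/P_1$, after reinserting the factors $x^{-\oun{(\cdot)}}$ and the units, actually reassembles into an admissible factor of the decomposition \eqref{eqdcp2}: in particular one must check that it is either a unit (when $P_2/P_1$ is a monomial, accounting for the power of $e^{\beta x}$) or a genuine simple function of support $V$ with $P(0)\neq 0$ after normalizing, and that its $x^{-\omega}$ exponent matches $\oun{(P_2/P_1)}$ exactly as in the proof of Lemma~\ref{lemsplebis}. A second point to handle cleanly is that the common $\beta$ must be chosen once and for all for the pair $s_V(f_1), s_V(f_2)$, using the remark following Lemma~\ref{lemsplebis} that finitely many simple functions of the same support can be written with a common $\beta$; this ensures $P_1$ and $P_2$ live in the same $\Qbar[X]$ and the divisibility statement is meaningful.

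Finally, I would note that the uniqueness clause of Conjecture~\ref{conjfact} does double duty here: it both justifies that the factors read off in the forward direction are unambiguous and guarantees that the $g$ constructed in the converse is the \emph{unique} quotient, so that $f_1\mid f_2$ in $\calE$ is equivalent to the two displayed conditions with no further freedom. With these pieces in place the proof is complete.
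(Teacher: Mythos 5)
Your proposal is correct and takes essentially the same route as the paper: the forward direction compares the unique decompositions \eqref{eqdcp2} of $f_1$, of the quotient, and of $f_2$, while the converse assembles the quotient as a product of per-support simple quotients, the powers $h^{v_h(f_2)-v_h(f_1)}$, and a suitable unit. The only cosmetic difference lies in how the per-support quotient is identified --- you use the polynomial divisibility criterion $P_1\mid P_2$ in $\Qbar[X]$ stated just before the Proposition, whereas the paper deduces $f_V=s_V(f_V)$ by applying uniqueness to $s_V(f_2)=s_V(f_1)f_V$ --- and note that no appeal to Conjecture~\ref{conjfact} is needed to see that your factors multiply to a genuine $E$-function, since $\calE$ is a ring.
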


\begin{proof}[Proof of Proposition~\ref{propdiviE}] If $f_1$ divides $f_2$, we have $f_2=f_1f$ for some $f\in\calE\setminus\{0\}$. Comparing the decompositions \eqref{eqdcp2} of $f$, $f_1$ and $f_2$ gives $s_V(f_2)=s_V(f_1)s_V(f)$ and $v_h(f_2)=v_h(f_1)+v_h(f)$ by unicity, and this concludes the proof. Conversely, let $V\in\calV$ and assume that $s_V(f_1)$ divides $s_V(f_2)$. Then we have $s_V(f_2) = s_V(f_1) f_V$ for some $f_V\in \calE\setminus\{0\}$. Decomposing $f_V $ as in \eqref{eqdcp2} yields a decomposition of $s_V(f_2)$. By unicity of the latter, we have $s_V(f_2) = s_V(f_1)s_V( f_V)$. Moreover, if $s_V(f_2)=s_V(f_1)=1$ then $f_V=1$ so that $s_V(f_V)=1$. Finally, if we assume that for any $V\in\calV$, $s_V(f_1)$ divides $s_V(f_2)$ and 
for any $h\in\calI$, $v_h(f_1)\leq v_h(f_2)$, then letting 
$$ f =\Big( \prod_{V\in \calV} s_V(f_V)\Big)\Big( \prod_{h\in \calI} h^{v_h(f_2)-v_h(f_1)}\Big) $$
(where in each product, only finitely many factors are different from 1), we have $f_2=f_1f$. This concludes the proof of Proposition~\ref{propdiviE}.
\end{proof}

\bigskip

 Proposition~\ref{propdiviE} enables us to define gcd's in $\calE$ as follows, starting with simple functions with the same support.
\begin{defi}\label{defisimplegcd}
 Let $g_1$, $g_2$ be simple functions with the same support $V$. Using Lemma~\ref{lemsplebis} we may write $g_1(x)=x^{-\oun P_1} u_1(x) P_1(e^{\beta x})$ and $g_2(x)=x^{-\oun P_2} u_2(x)P_2(e^{\beta x})$ where $u_1,u_2\in\Ecroix$, $P_1,P_2\in\Qbar[X]$ don't vanish at 0, and $\beta\in V\setminus\{0\}\subset \Qbar\etoile$. The {\em gcd of $g_1$ and $g_2$}, denoted by $\gcd(g_1,g_2)$, is then $P(e^{\beta x})$ where $P$ is the gcd of $P_1$ and $P_2$ in $\Qbar[X]$. 
\end{defi}

The following lemma shows that this gcd is independent (up to a unit of $\calE$) of the choice of $\beta$, $P_1$, $P_2$, $u_1$, $u_2$. 

\begin{lem}
 \label{lemgcdsple} Let $g_1$, $g_2$, $g_3$ be simple functions with the same support $V$. Then $g_3$ divides $\gcd(g_1,g_2)$ if, and only if, $g_3$ divides both $g_1$ and $g_2$.
\end{lem}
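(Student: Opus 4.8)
The plan is to transport the entire statement into the polynomial ring $\Qbar[X]$, where gcd's are well understood because $\Qbar[X]$ is a UFD. First I would invoke the remark following Lemma~\ref{lemsplebis} to write the three simple functions with one common $\beta\in V\setminus\{0\}$, say
$$g_i(x)=x^{-\oun{P_i}}u_i(x)P_i(e^{\beta x}),\qquad i=1,2,3,$$
with units $u_i\in\Ecroix$ and polynomials $P_i\in\Qbar[X]$ satisfying $P_i(0)\neq0$ and $\deg P_i\geq1$. By Definition~\ref{defisimplegcd} we then have $\gcd(g_1,g_2)=P(e^{\beta x})$, where $P=\gcd(P_1,P_2)$ is computed in $\Qbar[X]$.

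The key ingredient is the divisibility dictionary established in the argument preceding Definition~\ref{defisimplepart}: for simple functions with the same support, divisibility in $\calE$ corresponds to divisibility of the associated polynomials. Applied to the pairs $(g_3,g_1)$ and $(g_3,g_2)$ it gives $g_3\mid g_1$ iff $P_3\mid P_1$, and $g_3\mid g_2$ iff $P_3\mid P_2$. For the gcd I would establish the same dictionary in the slightly more general form: for any $Q\in\Qbar[X]$ with $Q(0)\neq0$, one has $g_3\mid Q(e^{\beta x})$ in $\calE$ if and only if $P_3\mid Q$ in $\Qbar[X]$; taking $Q=P$ then yields $g_3\mid\gcd(g_1,g_2)$ iff $P_3\mid P$. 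Once these three equivalences are in hand, the lemma is immediate from the universal property of the gcd in $\Qbar[X]$, namely $P_3\mid\gcd(P_1,P_2)$ if and only if $P_3\mid P_1$ and $P_3\mid P_2$.

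It remains to prove this generalized dictionary, which is the only place requiring care. For the ``if'' direction, writing $Q=P_3R$ gives $Q(e^{\beta x})=g_3\cdot\bigl(x^{\oun{P_3}}u_3^{-1}R(e^{\beta x})\bigr)$, and the bracketed factor is visibly an $E$-function, so $g_3\mid Q(e^{\beta x})$ genuinely holds in $\calE$ (only the fact that $E$-functions are entire is used, not Jossen's conjecture). For the ``only if'' direction I would argue by contraposition, as before Definition~\ref{defisimplepart}, but tracking multiplicities: if $P_3\nmid Q$, then $P_3/\gcd(P_3,Q)$ is non-constant and has a root $y$ whose multiplicity in $P_3$ strictly exceeds its multiplicity in $Q$. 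Since $P_3(0)\neq0$ we have $y\neq0$, so there is $x_1\in\C$ with $e^{\beta x_1}=y$, and we may take $x_1\neq0$ (replacing $x_1$ by $x_1+2i\pi/\beta$ if necessary). At such a point the order of vanishing of $g_3$ exceeds that of $Q(e^{\beta x})$, so the quotient $Q(e^{\beta x})/g_3$ has a pole at $x_1$ and cannot be entire; hence $g_3\nmid Q(e^{\beta x})$.

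The main obstacle is exactly this multiplicity bookkeeping. The functions $g_i$ need not have simple zeros, since a root of $P_i$ of multiplicity $k$ produces a zero of $g_i$ of order $k$; and the gcd $P(e^{\beta x})$ may fail to be a genuine simple function, being either a unit (when $P_1,P_2$ are coprime) or vanishing at the origin (when $P(1)=0$). Phrasing the dictionary uniformly in terms of an arbitrary $Q$ with $Q(0)\neq0$ sidesteps all these edge cases at once: the power $x^{\oun{P_3}}$ influences only the order at the origin, whereas the decisive poles are produced at the points $x_1\neq0$, so the argument goes through without any separate treatment of units or of the root at $1$.
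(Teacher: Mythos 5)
Your proof is correct and follows essentially the same route as the paper: write all three functions with a common $\beta$ via Lemma~\ref{lemsplebis} and the remark after it, then reduce divisibility in $\calE$ to divisibility in $\Qbar[X]$ using the dictionary established just before Definition~\ref{defisimplepart}. Your generalized dictionary (for an arbitrary $Q\in\Qbar[X]$ with $Q(0)\neq 0$, proved by tracking zero multiplicities rather than reducing to the coprime case) is a welcome refinement, since $\gcd(g_1,g_2)$ need not itself be a simple function---it can be a unit, or vanish at the origin when $\gcd(P_1,P_2)$ has $1$ as a root---an edge case the paper's one-line proof passes over silently.
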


The proof of this lemma is straightforward upon writing $g_i(x)=x^{-\oun P_i} u_i(x)P_i(e^{\beta x})$ where $\beta$ is independent of $i\in\{1,2,3\}$, and using the remark before Definition~\ref{defisimplepart}. 

We can now generalize the definition of gcd.

\begin{defi} \label{defgcd}
 Let $f_1,f_2$ be non-zero $E$-functions. The {\em gcd of $f_1$ and $f_2$}, denoted by $\gcd(f_1,f_2)$, is 
 $$ \Big( \prod_{V\in \calV} \gcd( s_V(f_1), s_V(f_2))\Big)\Big( \prod_{h\in \calI} h^{\min(v_h(f_1),v_h(f_2))}\Big) .$$
\end{defi}

This definition makes sense because of the following result, which is an immediate consequence of Proposition~\ref{propdiviE} and Lemma~\ref{lemgcdsple}. 

\begin{prop}
 \label{propgcd} Let $f_1,f_2,f_3$ be non-zero $E$-functions. Then $f_3$ divides $\gcd(f_1,f_2)$ if, and only if, $f_3$ divides both $f_1$ and $f_2$.
\end{prop}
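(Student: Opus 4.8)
The plan is to compare the canonical decompositions \eqref{eqdcp2} of $f_1$, $f_2$, $f_3$ and $\gcd(f_1,f_2)$, and to run the divisibility criterion of Proposition~\ref{propdiviE} component by component. First I would record that, by the very definition of $\gcd(f_1,f_2)$ in Definition~\ref{defgcd} together with the uniqueness part of Conjecture~\ref{conjfact}, the simple part with support $V$ of $\gcd(f_1,f_2)$ is $\gcd(s_V(f_1),s_V(f_2))$ and, for each $h\in\calI$, the $h$-adic valuation of $\gcd(f_1,f_2)$ is $\min(v_h(f_1),v_h(f_2))$. For this reading to be legitimate, the preliminary point to check is that for every $V\in\calV$ the function $\gcd(s_V(f_1),s_V(f_2))$ of Definition~\ref{defisimplegcd} is either a unit or a simple $E$-function with support exactly $V$; this follows from Lemma~\ref{lemsplebis} applied to $P=\gcd(P_1,P_2)$, which inherits $P(0)\neq0$ from $P\mid P_1$ and $P_1(0)\neq0$. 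Once this is known, the product in Definition~\ref{defgcd} genuinely is a decomposition of the form \eqref{eqdcp2}, and the identification of its simple parts and valuations is forced by uniqueness.

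Granting this, Proposition~\ref{propdiviE} applied to the pair $(f_3,\gcd(f_1,f_2))$ shows that $f_3\mid\gcd(f_1,f_2)$ is equivalent to the conjunction of
$$ s_V(f_3)\mid\gcd\big(s_V(f_1),s_V(f_2)\big)\quad\text{for all }V\in\calV $$
and $v_h(f_3)\le\min\big(v_h(f_1),v_h(f_2)\big)$ for all $h\in\calI$. For the valuation conditions I would simply use the elementary equivalence $a\le\min(b,c)\iff(a\le b\text{ and }a\le c)$ in $\N$, turning them into ``$v_h(f_3)\le v_h(f_1)$ and $v_h(f_3)\le v_h(f_2)$ for all $h$''. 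For the simple-part conditions I would invoke Lemma~\ref{lemgcdsple} with $g_1=s_V(f_1)$, $g_2=s_V(f_2)$, $g_3=s_V(f_3)$, which asserts precisely that $s_V(f_3)\mid\gcd(s_V(f_1),s_V(f_2))$ if and only if $s_V(f_3)$ divides both $s_V(f_1)$ and $s_V(f_2)$.

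Recombining, $f_3\mid\gcd(f_1,f_2)$ becomes equivalent to: for every $V$, $s_V(f_3)$ divides both $s_V(f_1)$ and $s_V(f_2)$; and for every $h$, $v_h(f_3)\le v_h(f_1)$ and $v_h(f_3)\le v_h(f_2)$. Grouping the conditions referring to $f_1$ together and those referring to $f_2$ together, and applying Proposition~\ref{propdiviE} once to the pair $(f_3,f_1)$ and once to $(f_3,f_2)$, this is in turn equivalent to ``$f_3\mid f_1$ and $f_3\mid f_2$'', which is the assertion of the proposition.

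The step I expect to require the most care is not the logic of this recombination, which is purely formal, but the bookkeeping with trivial factors. Lemma~\ref{lemgcdsple} is stated for three \emph{genuine} simple functions sharing a support, whereas the simple parts $s_V(f_1),s_V(f_2),s_V(f_3)$ are allowed to equal the constant $1$. I would therefore dispatch separately the degenerate cases in which one of them is $1$, using that $1$ divides every $E$-function and that a genuine simple function, being a non-unit, cannot divide $1$; in each such case both sides of the desired equivalence are seen to have the same truth value directly. Once these cases are handled, the chain of equivalences above holds verbatim and the proposition follows.
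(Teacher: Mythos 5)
Your overall route is the paper's: there the proposition is stated as an immediate consequence of Proposition~\ref{propdiviE} and Lemma~\ref{lemgcdsple}, and your component-wise argument (reading off the simple parts and the valuations of $\gcd(f_1,f_2)$ from Definition~\ref{defgcd} by uniqueness of \eqref{eqdcp2}, splitting $a\le\min(b,c)$, invoking Lemma~\ref{lemgcdsple} support by support, then reassembling via Proposition~\ref{propdiviE} applied to $(f_3,f_1)$ and $(f_3,f_2)$) is exactly the bookkeeping the paper leaves implicit; your separate treatment of the factors equal to $1$ is also the right way to make Lemma~\ref{lemgcdsple} applicable.

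However, your preliminary point is wrong as justified, and this is precisely where the difficulty sits. Lemma~\ref{lemsplebis} says that $x^{-{\rm ord}_1 P}u(x)P(e^{\beta x})$ is simple when $P(0)\neq 0$; it does \emph{not} say that $P(e^{\beta x})$ is, and the two differ by $x^{{\rm ord}_1 P}$, which is not a unit. So when $P=\gcd(P_1,P_2)$ has $P(1)=0$, the function $\gcd(s_V(f_1),s_V(f_2))=P(e^{\beta x})$ of Definition~\ref{defisimplegcd} vanishes at $0$ and is neither a unit nor simple; the product in Definition~\ref{defgcd} is then not of the form \eqref{eqdcp2}, and the identification $v_{h_0}(\gcd(f_1,f_2))=\min(v_{h_0}(f_1),v_{h_0}(f_2))$ for $h_0(x)=x$ fails, since each such support contributes an extra factor $x^{{\rm ord}_1 P}$. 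This is not a removable technicality: take $f_1=f_2=\sin(x)/x$, which is simple and normalized, with $P_1=P_2=X-1$, $\beta=2i$, $u_i(x)=e^{-ix}/(2i)$. Definition~\ref{defisimplegcd} gives $\gcd(f_1,f_2)=e^{2ix}-1$, and $f_3=e^{2ix}-1$ divides this gcd but does not divide $f_1$, because $f_1/f_3=e^{-ix}/(2ix)$ has a pole at $0$. So, with the definitions read literally, the statement you are proving is false, and any proof must break exactly where yours appeals to Lemma~\ref{lemsplebis}. To be fair, this is an imprecision inherited from the paper: Definition~\ref{defisimplegcd} should read $x^{-{\rm ord}_1 P}P(e^{\beta x})$ rather than $P(e^{\beta x})$. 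With that corrected reading your preliminary claim is genuinely given by Lemma~\ref{lemsplebis}, and the rest of your argument goes through verbatim.
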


To sum up the results obtained in this section, we state the following (see \cite[p. 4]{cg} for the definition and various other names of a gcd domain, including pseudo-Bezout ring \cite[p. 280]{bourbaki}).

\begin{theo}
 \label{thstruct} The ring $\calE$ is neither factorial nor noetherian. However, if Conjecture~\ref{conjfact} holds, it is a gcd domain.
\end{theo}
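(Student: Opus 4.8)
The plan is to prove Theorem~\ref{thstruct} in three independent parts: that $\calE$ is not factorial, that it is not noetherian, and that (under Conjecture~\ref{conjfact}) it is a gcd domain. The first two parts require no conjecture, while the third is a direct packaging of the propositions already established.

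For the failure of factoriality, I would invoke Eq.~\eqref{eqnonfact}, which exhibits $e^x-1$ as a product of $N$ non-units for every $N\geq 1$. To make this rigorous I must verify that each factor $e^{x/N}-e^{2i\pi k/N}$ is genuinely a non-unit, which is immediate since units are exactly the non-vanishing $E$-functions $\lambda e^{\alpha x}$ and each such factor has zeros. Since a nonzero element of a factorial ring admits at most a bounded number of non-unit factors (bounded by the total multiplicity in its prime factorization), and $e^x-1$ admits arbitrarily long such factorizations, $\calE$ cannot be factorial. For non-noetherianity, the cleanest route is to produce a strictly ascending chain of principal (or finitely generated) ideals; I would consider the ideals $I_N=(e^{x/N}-1)$ and use the divisibility relation coming from $e^{x/N}-1 \mid e^{x/N^2}-1$ reasoning, or more simply exhibit an ideal that is not finitely generated. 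The simplest concrete witness uses the factorization above: the element $e^x-1$ has infinitely many irreducible-type divisors $h_{x_0}$ with $x_0=2i\pi k$, so the ideal generated by all divisors of $e^x-1$ cannot be finitely generated, contradicting the ascending chain condition. I expect this to be the part needing the most care, since I must pin down a specific non-finitely-generated ideal and justify the chain condition failure cleanly rather than hand-wave.

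For the gcd domain property, assuming Conjecture~\ref{conjfact}, the work is already done: Definition~\ref{defgcd} constructs $\gcd(f_1,f_2)$ for any two nonzero $E$-functions, and Proposition~\ref{propgcd} establishes exactly the defining universal property of a gcd, namely that $f_3 \mid \gcd(f_1,f_2)$ if and only if $f_3$ divides both $f_1$ and $f_2$. I would simply recall that an integral domain in which every pair of nonzero elements has a gcd (in this universal sense) is by definition a gcd domain, and cite Proposition~\ref{propgcd} together with the observation that $\calE$ is an integral domain (being a subring of $\Qbar[[x]]$).

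The main obstacle will be the non-noetherian assertion, because unlike the factoriality failure there is no ready-made displayed equation to quote, and I must argue the absence of the ascending chain condition directly. I anticipate the argument will hinge on the unbounded factorization in Eq.~\eqref{eqnonfact}: from the chain of divisibilities $e^x-1 = (e^{x/N}-1)\cdot(\text{product of other factors})$, one reads off that each $e^{x/N}-1$ divides $e^x-1$, and by varying $N$ one obtains elements whose principal ideals form a non-stabilizing ascending chain $(e^x-1)\subsetneq(e^{x/2}-1)\subsetneq(e^{x/4}-1)\subsetneq\cdots$, each strict inclusion being justified by comparing zero sets. Verifying strictness of each inclusion via the explicit zeros is the delicate bookkeeping step I would treat most carefully.
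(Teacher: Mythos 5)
Your proposal is correct, and the factoriality and gcd-domain parts coincide with the paper's proof (Eq.~\eqref{eqnonfact} plus the observation that a factorial ring forbids arbitrarily long products of non-units; then Definition~\ref{defgcd} and Proposition~\ref{propgcd} for the gcd property). Where you genuinely diverge is the non-noetherian part. The paper exhibits the increasing chain of \emph{vanishing-locus} ideals $I_N=\{f\in\calE:\ f(k2^N\pi)=0 \ \forall k\in\Z\}$, with strictness witnessed by $\sin(x/2^{N+1})\in I_{N+1}\setminus I_N$. You instead take the chain of \emph{principal} ideals $(e^x-1)\subsetneq(e^{x/2}-1)\subsetneq(e^{x/4}-1)\subsetneq\cdots$, where the inclusions come from $e^{x/2^N}-1=(e^{x/2^{N+1}}-1)(e^{x/2^{N+1}}+1)$ and strictness follows by evaluating at $x=2^{N+1}\pi i$, which is a zero of $e^{x/2^N}-1$ but not of $e^{x/2^{N+1}}-1$ (an entire quotient would force every zero of the divisor to be a zero of the dividend). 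Both arguments are complete and of comparable length; yours has the added merit of disproving the ascending chain condition already for principal ideals (ACCP), which by itself re-implies non-factoriality, so your single chain could in principle do double duty for both negative statements. One caution: the alternative witness you float midway --- ``infinitely many irreducible-type divisors $h_{x_0}$ with $x_0=2i\pi k$'' --- does not work, since $h_{x_0}(x)=(1-x/x_0)e^{x/x_0}$ is an $E$-function only for $x_0\in\Qbar$; for transcendental $x_0$ its Taylor coefficients are not algebraic, and in fact $e^x-1$ has no divisor in $\calE$ vanishing at a single point $2i\pi k$. Since you ultimately commit to the principal-ideal chain, this flawed detour does not affect the validity of your final argument.
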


The non-factoriality of $\calE$ has been proved already using Eq. \eqref{eqnonfact}. To prove that $\calE $ is not noetherian, we show that $(I_N)_{N\geq 0}$ is an increasing sequence of ideals, where
$$I_N = \{f\in\calE, \quad \forall k \in\Z\quad f(k2^N\pi)=0\}.$$
Indeed, for any $N\geq 0$ we have $I_N\subset I_{N+1}$ and $f_N(x) = \sin(x/2^{N+1})$ belongs to $I_{N+1}\setminus I_{N}$.

At last, given $f_1,f_2\in\calE$, we conjecture that the ideal of $\calE$ generated by $f_1 $ and $f_2$ is not principal in general, so that there are no Bezout relations in $\calE$. 

\section{Zeros of $E$-functions}\label{seczerosE}

In this section we study zeros of $E$-functions. Our point of view is to state two conjectures dealing with  special $E$-functions, and then to deduce the general properties in Theorems \ref{th1} and \ref{th2}.

\begin{conj}
 \label{conjirred} Let $h_1$, $h_2$ be irreducible $E$-functions with (at least) a common zero in $\C$. Then $h_1=u h_2$ for some unit $E$-function $u$.
\end{conj}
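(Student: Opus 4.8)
The plan is to reduce Conjecture~\ref{conjirred} to an $E$-function analogue of Theorem~\ref{th3} (equivalently Theorem~\ref{th10}), which I will call the \emph{common-factor property}: any two non-zero $E$-functions sharing a zero $\xi\in\C\etoile$ admit a common non-unit factor in $\calE$ vanishing at $\xi$. Granting this property, the conjecture follows exactly as in the exponential-polynomial setting (second bullet of Corollary~\ref{cordicho}): after normalizing $h_1,h_2$ by units we may assume $h_1,h_2\in\calI$; a common zero $\xi$ then yields a common non-unit factor $f$, which by irreducibility of $h_1$ is associate to $h_1$ and by irreducibility of $h_2$ is associate to $h_2$, whence $h_1=uh_2$ for a unit $u$. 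Thus the whole difficulty is concentrated in the common-factor property.

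First I would dispose of the case where the common zero is algebraic. If $\xi=x_0\in\Qbar$, then the discussion preceding Proposition~\ref{propordreannul}---which uses only \cite[Proposition~4.1]{beukers}, normalization, and irreducibility, and \emph{not} Conjecture~\ref{conjfact}---shows that the unique normalized irreducible $E$-function vanishing at $x_0$ is $h_{x_0}$. Hence, after normalization, $h_1=h_2=h_{x_0}$ and the conclusion is immediate. The substantive content of the conjecture therefore concerns a \emph{transcendental} common zero $\xi$, and it is for this case that the common-factor property must be established.

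For transcendental $\xi$ I would try to transport the Schanuel-based mechanism behind Theorem~\ref{th3}. There, one writes two exponential polynomials as $P_1(x,e^{\alpha_1 x},\dots,e^{\alpha_p x})$ and $P_2(x,e^{\alpha_1 x},\dots,e^{\alpha_p x})$, with $\alpha_1,\dots,\alpha_p$ a $\Z$-basis of the group generated by the exponents, and Schanuel's Conjecture~\ref{conjschanuelintro} forces the ideal of polynomial relations satisfied by $(\xi,e^{\alpha_1\xi},\dots,e^{\alpha_p\xi})$ to be principal; its generator produces the common factor. For $E$-functions one would seek the analogous statement that, among the values at $\xi$ of a finite family of $E$-functions generating $h_1$ and $h_2$ together with their derivatives, there is essentially a single algebraic relation, so that one irreducible factor accounts for the common vanishing at $\xi$.

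The main obstacle is precisely this transcendence input. An $E$-function is not a polynomial in finitely many exponentials---the Bessel functions $\Gamma(\alpha+1)x^{-\alpha}J_\alpha(x)$ being the decisive examples---so the clean algebraic coordinatization underlying Theorem~\ref{th3} is unavailable, and Schanuel's conjecture as stated does not control the values of such functions at a transcendental point. Making the common-factor property precise would require a Schanuel-type transcendence principle for values of $E$-functions at $\xi$, most naturally formulated through André's theory of $E$-operators and the attached differential-Galois and period structure. Establishing such a principle, even conditionally, is where the real work lies, and I expect it to be substantially harder than the exponential-polynomial case already settled in Theorem~\ref{th3}; this is why the statement is recorded here as a conjecture rather than a theorem.
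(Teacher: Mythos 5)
The statement you were asked to prove is not proved in the paper at all: it is stated as Conjecture~\ref{conjirred} and is used only as a \emph{hypothesis} in Theorems~\ref{th1} and \ref{th2}, so there is no proof of the authors' to compare yours against, and your closing admission that ``the real work lies'' elsewhere is exactly right --- what you have written is a reduction plus an identification of the obstruction, not a proof. Concretely, your argument has three parts. (1) The algebraic case: if the common zero is $x_0\in\Qbar$, then \cite[Proposition~4.1]{beukers} gives $h_i(x)=(x-x_0)g_i(x)$ with $g_i\in\calE$, and irreducibility forces each $g_i$ to be a unit, so $h_1$ and $h_2$ are both associates of $x-x_0$. This is correct, unconditional, and coincides with the only case the paper itself settles (the remark following the conjecture, which observes that it holds when $h_1(x)=x-x_0$). (2) The reduction: granting the common-factor property --- which is precisely part $(ii)$ of Jossen's Conjecture~\ref{conj2intro} --- a common non-unit divisor of two irreducibles is an associate of each, hence they are associates of each other. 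This is also correct, and consistent with the paper's own logical architecture, which runs the implication in the opposite direction: Conjectures~\ref{conjfact}, \ref{conjirred} and \ref{conjzerolog} together yield Jossen's part $(ii)$ via Theorem~\ref{th1} and Corollary~\ref{cormultizeros}. (3) The transcendental case: here you offer only a program, and you say so.

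The genuine gap is therefore step (3), and it is not a fixable oversight. The common-factor property for a transcendental common zero $\xi$ is an open problem strictly harder than Theorem~\ref{th3}: as you correctly note, an $E$-function such as $\Gamma(\alpha+1)x^{-\alpha}J_\alpha(x)$ is not a polynomial in $x$ and finitely many exponentials, so the coordinatization that lets Schanuel's conjecture produce a principal ideal of relations at $\xi$ --- the engine of the proof of Theorem~\ref{th3} --- simply does not exist, and no substitute transcendence principle for values of $E$-functions at transcendental points is available, even conjecturally in a usable form. Your proposal is best read as an accurate placement of Conjecture~\ref{conjirred} within the paper's web of implications (algebraic zeros unconditional via Beukers; the general case tied, modulo factorization, to Jossen's part $(ii)$), rather than as a proof; as a proof attempt it stops exactly where the authors' choice of the word ``Conjecture'' signals that everyone else has stopped too.
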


This conjecture holds if $h_1(x)=x-x_0$ for some $x_0\in\Qbar$. Indeed, if $h_2(x_0)=0$ then $h_{x_0}$ divides $h_2$ (see Proposition~\ref{propordreannul} above). Now both $h_{x_0}$ and $h_2$ are irreducible, so that $h_2=u_1 h_{x_0} = u_2 h_1$ for some $u_1,u_2\in\Ecroix$. 

\begin{conj}
\label{conjzerolog} Let $\xi\in\C\etoile$ be such that $e^{\beta\xi}$ is algebraic for some $\beta\in\Qbar\etoile$. Then $h(\xi)\neq0$ for any irreducible $E$-function $h$.
\end{conj}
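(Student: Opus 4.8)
The plan is to play the two possible kinds of zeros of $\xi$ off against each other, exactly as Corollary~\ref{cordicho} does in the setting of exponential polynomials. Suppose, for contradiction, that $h$ is an irreducible $E$-function with $h(\xi)=0$ while $e^{\beta\xi}=\gamma\in\Qbar\etoile$ for some $\beta\in\Qbar\etoile$. A first, purely preliminary, observation is that $\xi$ must be transcendental: were $\xi$ algebraic, then $\beta\xi$ would be a non-zero algebraic number and $e^{\beta\xi}$ would be transcendental by the Hermite--Lindemann theorem, contradicting $\gamma\in\Qbar$. Thus $\xi$ is a transcendental zero of the \emph{simple} $E$-function $s(x)=e^{\beta x}-\gamma$, which belongs to $\calE$ precisely because $\gamma$ is algebraic and whose support is $\Span_\Q(\beta)$. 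The whole strategy is to force $h$ and $s$ to share a non-unit factor and then to contradict the simplicity of $s$.

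I would first dispose of the case where $h$ happens to lie in the ring $\calP$ of exponential polynomials over $\Qbar$. Since $\calP\subset\calE$ and the two rings have the same units, an element of $\calP$ that is irreducible in $\calE$ is also irreducible in $\calP$; and $s\in\calP$ as well. Corollary~\ref{cordicho}, which rests on Schanuel's conjecture through Theorem~\ref{th10}, then applies to the common zero $\xi$ of $h$ and $s$: its dichotomy forbids $\xi$ from simultaneously satisfying $e^{\beta\xi}\in\Qbar$ for some $\beta\in\Qbar\etoile$ and being a zero of an irreducible exponential polynomial. This is the sought contradiction, so Conjecture~\ref{conjzerolog} already holds (under Schanuel) whenever $h$ is an exponential polynomial; in particular this settles the case of the Bessel functions $J_{n+1/2}$.

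For a general irreducible $E$-function $h$ I would run the same argument through the conjectural Ritt theory of \S\ref{secstruct}. Granting the $E$-function analogue of Theorem~\ref{th10} --- that a single common zero in $\C\etoile$ of two $E$-functions forces a non-trivial gcd --- the common zero $\xi$ would give $\gcd(h,s)\neq 1$; as $h$ is irreducible this forces $h\mid s$, hence $v_h(s)\geq 1$. On the other hand $s$ is simple, so by Conjecture~\ref{conjfact} its decomposition \eqref{eqdcp2} has no irreducible factor, i.e.\ $v_{h'}(s)=0$ for every $h'\in\calI$; equivalently, by Proposition~\ref{propdiviE}, no irreducible $E$-function divides $s$. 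This contradiction would complete the proof.

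The hard part is exactly the input borrowed in the previous paragraph: for arbitrary $E$-functions, the statement ``one common zero in $\C\etoile$ implies a common factor'' is a genuine generalization of Theorem~\ref{th10} that is not at our disposal. The proof of Theorem~\ref{th3} is specific to exponential polynomials, since it writes $f_1,f_2$ as honest polynomials in finitely many exponentials $e^{\alpha_1x},\dots,e^{\alpha_px}$ and then uses Schanuel to pin down, via a principal ideal in $\Qbar[X_0,\dots,X_p]$, the algebraic relations holding at $\xi$. An irreducible $E$-function such as $\Gamma(\alpha+1)x^{-\alpha}J_\alpha(x)$ has no such shape, so there is no finite family of exponentials to feed into Schanuel and the ideal-theoretic mechanism breaks down. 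Overcoming this would seem to require a transcendence statement about the values at $\xi$ of $E$-functions, where $\xi$ is a transcendental number with $e^{\beta\xi}$ algebraic (think of $\pi$ or $\log 2$); I expect this to be the true crux of the difficulty. One might instead attempt an analytic route, studying $h$ near $\xi$ through the asymptotic expansions $\sum_{\rho}f_\rho(z)e^{\rho z}$ and the indicator-function machinery of \S\ref{ssec:auxi}, but I do not see how to make this produce the required common factor.
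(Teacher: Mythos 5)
The statement you were asked to prove is labelled as a \emph{conjecture} in the paper, and the paper does not prove it: it only records two special cases, namely that the conclusion holds when $h(x)=x-x_0$ with $x_0\in\Qbar$ (by the Hermite--Lindemann theorem) and when $h$ is an exponential polynomial over $\Qbar$ (under Schanuel's conjecture, via Corollary~\ref{cordicho}). Your proposal reproduces exactly this state of knowledge. Your preliminary observation that $\xi$ must be transcendental disposes of the case $h(x)=x-x_0$; and your second paragraph is precisely the paper's argument for $h\in\calP$: the units of $\calP$ and $\calE$ coincide, so irreducibility in $\calE$ descends to $\calP$, and the exclusivity in the dichotomy of Corollary~\ref{cordicho} (proved there via uniqueness in Ritt's factorization, i.e.\ no irreducible exponential polynomial divides a simple one such as $e^{\beta x}-\gamma$) yields the contradiction. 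So for the provable cases your route is the paper's route, and your conclusion that the general case is out of reach is also the paper's position.

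One remark sharpens your final paragraph. The ``$E$-function analogue of Theorem~\ref{th10}'' that you would need for general $h$ is Theorem~\ref{th1} of the paper, and that theorem takes Conjecture~\ref{conjzerolog} itself as one of its hypotheses; indeed, in the proof of Theorem~\ref{th1} this conjecture is invoked precisely to exclude the mixed configuration you construct, namely a simple function $s(x)=e^{\beta x}-\gamma$ and an irreducible $E$-function $h$ vanishing at the same point $\xi\in\C\etoile$. So the missing input is not merely unavailable: within the paper's architecture it is logically downstream of the statement to be proved, and your conditional argument for the general case would be circular if instantiated with Theorem~\ref{th1}. The general case therefore remains, as you correctly judge, a genuine open conjecture, whose resolution seems to require new transcendence results about values of $E$-functions at points $\xi$ (such as $\pi$ or $\log 2$) for which some $e^{\beta\xi}$ is algebraic.
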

This result follows from the Hermite-Lindemann theorem if $h(x)=x-x_0$ for some $x_0\in\Qbar$. If $h$ is an exponential polynomial over $\Qbar$, the conclusion of Conjecture~\ref{conjzerolog} follows from Schanuel's conjecture (see Corollary~\ref{cordicho} in \S \ref{secpolexp}).

In particular Conjecture~\ref{conjzerolog} implies that no irreducible $E$-function vanishes at $\pi$, $\log 2$, etc.

\bigskip

We are now ready to  provide an analogue of Theorem~\ref{th10} for $E$-functions.

\begin{theo}
 \label{th1} Assume that Conjectures \ref{conjfact}, \ref{conjirred} and \ref{conjzerolog} hold. Let $f_1$, $f_2$ be non-zero $E$-functions. Then $\frac{f_1}{\gcd(f_1,f_2)}$ and $\frac{f_2}{\gcd(f_1,f_2)}$ have no common zero in $\C$. In other words, common zeros of $f_1$ and $f_2$ are exactly the zeros of $\gcd(f_1,f_2)$, and for any such $\xi$, the order of vanishing of $\gcd(f_1,f_2) $ at $\xi$ is the least of the orders of vanishing of $f_1$ and $f_2$ at~$\xi$. 

At last, this result holds unconditionally if $f_1$ and $f_2$ are simple.
 \end{theo}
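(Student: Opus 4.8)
The plan is to prove Theorem~\ref{th1} by reducing the statement about common zeros to the divisibility machinery developed for the gcd domain $\calE$. Concretely, I would argue by contradiction: suppose $\xi\in\C$ is a common zero of $\frac{f_1}{\gcd(f_1,f_2)}$ and $\frac{f_2}{\gcd(f_1,f_2)}$. By Jossen's conjecture (in the form established by Proposition~\ref{propordreannul}, which gives divisibility from vanishing) these quotients are genuine $E$-functions, so it suffices to show that any $\xi$ which is simultaneously a zero of $f_1$ and $f_2$ already satisfies $\ord_\xi \gcd(f_1,f_2)=\min(\ord_\xi f_1,\ord_\xi f_2)$. The heart of the matter is therefore local, at a single point $\xi$.

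First I would split according to the nature of $\xi$. If $\xi=x_0\in\Qbar$, then by Proposition~\ref{propordreannul} the order of vanishing of any non-zero $F\in\calE$ at $x_0$ equals $v_{h_{x_0}}(F)$, and by Definition~\ref{defgcd} we have $v_{h_{x_0}}(\gcd(f_1,f_2))=\min(v_{h_{x_0}}(f_1),v_{h_{x_0}}(f_2))$; this settles the algebraic case immediately. The remaining case is $\xi\in\C\setminus\Qbar$ (in particular $\xi\neq 0$), and here I would use the factorization \eqref{eqdcp2} of $f_1$ and $f_2$ to locate which factors can possibly vanish at $\xi$. Since $\xi$ is transcendental, the irreducible factors $h_{x_0}$ (which vanish only at $x_0\in\Qbar$) are irrelevant, and more generally I claim that among the factors in \eqref{eqdcp2}, the ones vanishing at $\xi$ come in a very restricted form. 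This is where Conjectures~\ref{conjirred} and~\ref{conjzerolog} enter.

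The key step is to show that at a transcendental $\xi$, all factors of $f_1$ and $f_2$ that vanish at $\xi$ are \emph{simple with the same support}. Indeed, suppose an irreducible factor $h$ of $f_1$ satisfies $h(\xi)=0$. By Conjecture~\ref{conjzerolog}, $\xi$ cannot satisfy $e^{\beta\xi}\in\Qbar$ for any $\beta\in\Qbar\etoile$ (otherwise no irreducible $E$-function would vanish there); but if an irreducible factor $h'$ of $f_2$ also vanished at $\xi$, Conjecture~\ref{conjirred} would force $h'=u h$, so the same irreducible element (up to a unit) divides both $f_i$ and is thus picked up by $\gcd$. The subtle point, which I expect to be the main obstacle, is reconciling \emph{simple} factors vanishing at $\xi$ across $f_1$ and $f_2$: if $s_{V}(f_1)(\xi)=0$ and $s_{V'}(f_2)(\xi)=0$ for simple parts of possibly different supports $V,V'$, I must show $V=V'$ and that the local orders combine correctly. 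Writing $s_V(f_1)=x^{-\oun P_1}u_1 P_1(e^{\beta x})$ and $s_{V'}(f_2)=x^{-\oun P_2}u_2 P_2(e^{\beta' x})$ via Lemma~\ref{lemsplebis}, vanishing at $\xi$ means $e^{\beta\xi}$ and $e^{\beta'\xi}$ are algebraic roots of $P_1,P_2$; then the Gel'fond--Schneider argument already carried out in the proof of Corollary~\ref{cordicho} forces $\beta'/\beta\in\Q$, hence $V=V'$. Once the supports agree, I reduce to a \emph{single} support: expressing both simple parts with a common $\beta$ and comparing the multiplicity of the common root $e^{\beta\xi}$ in $P_1$ and $P_2$ against its multiplicity in $\gcd(P_1,P_2)$ (which is the minimum, by Definition~\ref{defisimplegcd}) yields exactly $\ord_\xi\gcd=\min(\ord_\xi f_1,\ord_\xi f_2)$ and shows the quotients share no zero at $\xi$.

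Finally, for the unconditional statement when $f_1,f_2$ are simple, the entire argument is already available without Conjecture~\ref{conjfact}: gcd of simple functions with the same support is given directly by Definition~\ref{defisimplegcd}, the support-matching step is the unconditional Gel'fond--Schneider computation from Corollary~\ref{cordicho}, and the multiplicity bookkeeping is pure polynomial algebra on $P_1,P_2\in\Qbar[X]$. The case of distinct supports is handled by Lemma~\ref{lemmultisple}: if $f_1,f_2$ have different supports their gcd is a unit and any purported common zero of the quotients would yield, after the Gel'fond--Schneider step, a forced equality of supports, a contradiction. Thus no conjecture is needed in the simple case, completing the proof of Theorem~\ref{th1}.
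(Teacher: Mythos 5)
Your proposal is correct and takes essentially the same route as the paper's proof: decompose $f_1,f_2$ via Conjecture~\ref{conjfact}, use Gel'fond--Schneider to force equal supports when simple parts of both functions vanish at $\xi$, use Conjecture~\ref{conjirred} to identify irreducible factors vanishing at $\xi$, use Conjecture~\ref{conjzerolog} to exclude the mixed simple/irreducible case, and note that the simple--simple case reduces unconditionally to gcd bookkeeping for polynomials $P_1,P_2\in\Qbar[X]$. Two slips are cosmetic only: the quotients $f_i/\gcd(f_1,f_2)$ are $E$-functions because the gcd divides each $f_i$ in $\calE$ by construction (Proposition~\ref{propgcd}), not by Jossen's conjecture or Proposition~\ref{propordreannul}; and your ``key step'' as literally stated (all factors vanishing at a transcendental $\xi$ are simple with the same support) is false, since irreducible factors can vanish at transcendental points (conjecturally, Bessel zeros), but the dichotomy you actually argue --- either no irreducible factor vanishes at $\xi$, or by Conjecture~\ref{conjzerolog} no simple factor does --- is exactly the paper's case analysis.
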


Assuming that Conjecture~\ref{conjfact} holds ({\em i.e.}, that $E$-functions can be factored as in Ritt's theorem, so that gcd's exist), part $(ii)$ of Jossen's Conjecture~\ref{conj2intro} implies the conclusion of Theorem~\ref{th1}: coprime $E$-functions have no common zeros. In a converse way, we have the following.

\begin{coro}
 \label{cormultizeros} If Conjectures \ref{conjfact}, \ref{conjirred} and \ref{conjzerolog} hold then:
 \begin{itemize}
 \item Irreducible $E$-functions have only simple zeros.
 \item Conjecture~\ref{conj1intro} and Jossen's Conjecture~\ref{conj2intro}  hold.
 \end{itemize}
\end{coro}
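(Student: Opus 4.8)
**The plan is to deduce the three bulleted conclusions of Corollary~\ref{cormultizeros} from Theorem~\ref{th1} together with the structure theory developed in \S\ref{secstruct}.** All three Conjectures \ref{conjfact}, \ref{conjirred}, \ref{conjzerolog} are assumed, so Theorem~\ref{th1} is available and, crucially, so is the unique factorization \eqref{eqdcp2}, the notion of gcd (Definition~\ref{defgcd}), and Proposition~\ref{propgcd}.

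First I would prove that irreducible $E$-functions have only simple zeros, mimicking verbatim the proof of Corollary~\ref{cormultizeros2} in the exponential-polynomial setting. Let $h\in\calI$ be irreducible and suppose $\xi\in\C$ is a zero of multiplicity $\omega\ge 2$. Then $h(\xi)=h'(\xi)=0$, so $\xi$ is a common zero of $h$ and $h'$. Since $h'$ is an $E$-function (the ring $\calE$ is stable under $d/dx$), Theorem~\ref{th1} forces $\xi$ to be a zero of $\gcd(h,h')$; hence this gcd is not a unit. As $h$ is irreducible, $\gcd(h,h')=h$ up to a unit, so $h$ divides $h'$ in $\calE$. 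Writing $h'=hq$ with $q\in\calE$ entire, the order of vanishing at $\xi$ gives $\omega-1=v_\xi(h')\ge v_\xi(h)=\omega$, a contradiction. Note the case $\xi=0$ needs no special treatment here, unlike in the exponential-polynomial setting, because Proposition~\ref{propordreannul} makes $v_{h_{x_0}}(f)$ equal to the order of vanishing at \emph{every} $x_0\in\Qbar$, including $0$.

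Next I would establish Jossen's Conjecture~\ref{conj2intro}, treating part $(ii)$ first. If two $E$-functions $f_1,f_2$ share a common zero $\xi\in\C$, then by Theorem~\ref{th1} that zero is a zero of $h:=\gcd(f_1,f_2)$, so $h$ is not a unit; and $f_1/h$, $f_2/h$ are $E$-functions by Proposition~\ref{propgcd} (both $f_1$ and $f_2$ are divisible by their gcd), which is exactly the assertion of part $(ii)$. For part $(i)$, suppose $f/g$ is entire with $f,g\in\calE$; set $h=\gcd(f,g)$ and write $f=h\tilde f$, $g=h\tilde g$ with $\tilde f,\tilde g\in\calE$ coprime. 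Then $f/g=\tilde f/\tilde g$ is entire, so every zero of $\tilde g$ is a zero of $\tilde f$ of at least the same multiplicity; but Theorem~\ref{th1} says coprime $E$-functions have no common zero, so $\tilde g$ has no zero at all in $\C$, i.e.\ $\tilde g$ is a unit (non-vanishing $E$-functions are exactly the units). Hence $f/g=\tilde f\tilde g^{-1}\in\calE$, proving part $(i)$.

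Finally, for Conjecture~\ref{conj1intro} I would combine the simplicity of zeros of irreducibles with the factorization \eqref{eqdcp2}. Given a non-zero $f\in\calE$, decompose it as $f=u\prod_{V}s_V\prod_{h}h^{n_h}$. Using the first bullet, each irreducible $h$ contributes only simple zeros, and distinct normalized irreducibles have disjoint zero sets by Conjecture~\ref{conjirred} (a shared zero would force equality up to a unit); the simple factors $s_V$ vanish nowhere among the relevant points by Proposition~\ref{propordreannul} (they are non-vanishing at algebraic points, and more generally their zeros are governed by roots of the associated polynomial). Grouping the irreducible factors by the multiplicity $j=n_h$ with which they appear and setting $g_j$ to be (a normalized unit times) the product of the corresponding $h$'s, one obtains $f=\prod_{j\in J}g_j^{\,j}$ where $g_j$ has only simple zeros and $g_j,g_{j'}$ share no zero for $j\ne j'$; uniqueness up to units follows from the uniqueness in \eqref{eqdcp2}. \textbf{The main obstacle} is precisely this last bookkeeping step: one must verify that assembling the simple parts into a single multiplicity-$1$ factor is compatible with the required shape of Conjecture~\ref{conj1intro} (simple functions may carry multiple zeros, so they land in $g_1$), and that no cross-cancellation of zeros occurs between different supports or between simple and irreducible factors — all of which rests on the disjointness of zero loci guaranteed by Theorem~\ref{th1} and Conjecture~\ref{conjirred}.
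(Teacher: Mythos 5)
Your first bullet and your deduction of Jossen's Conjecture~\ref{conj2intro} are correct and essentially identical to the paper's own proof: the first bullet repeats the argument of Corollary~\ref{cormultizeros2} with Theorem~\ref{th1} in place of Theorem~\ref{th3}, and both parts of Jossen's conjecture are obtained from Theorem~\ref{th1} together with the existence of gcd's, exactly as the paper does (divide by the gcd, note that the coprime quotients can have no common zero, conclude that the denominator is a unit).

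The genuine gap is in your deduction of Conjecture~\ref{conj1intro}, at the very place you call ``the main obstacle'', and the parenthetical resolution you offer there is false. You group the irreducible factors of \eqref{eqdcp2} by their exponent $n_h$ and assert that the simple parts ``land in $g_1$''. But a simple $E$-function can have multiple zeros: for instance $(e^x-2)^2=e^{2x}-4e^x+4$ is simple with support $\Span_{\Q}(1)$, and every one of its zeros $\log(2)+2ik\pi$ has multiplicity exactly $2$; it occurs (up to a unit) as the simple part in the decomposition \eqref{eqdcp2} of an $E$-function, yet it cannot be placed in $g_1$ --- nor in any $g_j$ --- since Conjecture~\ref{conj1intro} requires each $g_j$ to have only simple zeros. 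So the factors of \eqref{eqdcp2}, taken as they stand, do not yield the desired factorization. The missing idea, which is the heart of the paper's treatment of this point, is to refine each simple part: by Lemma~\ref{lemsplebis} one writes $s_V(x)=x^{-\omega}u(x)P(e^{\beta x})$ with $u$ a unit, $\beta\in\Qbar\etoile$ and $P\in\Qbar[X]$, and factoring $P$ into linear factors over $\Qbar$ reduces everything to functions of the form $e^{\beta x}-y_0$ (divided by $x$ when $y_0=1$), whose derivative $\beta e^{\beta x}$ never vanishes, so that they have only simple zeros. These linear pieces must then be distributed among the various $g_j$ according to the multiplicity of the root $y_0$ in $P$ --- a simple part thus contributes to $g_j$ for every $j$ arising as a multiplicity of a root of its polynomial, not only to $g_1$. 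Once this refinement is made, the disjointness of the zero sets of the resulting $g_j$ (the point you correctly attribute to Theorem~\ref{th1} and Conjectures~\ref{conjirred} and~\ref{conjzerolog}) goes through as you describe, since the pieces placed in distinct $g_j$'s are pairwise coprime.
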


\begin{proof}[Proof of Corollary~\ref{cormultizeros}] The first part can be proved exactly in the same way as Corollary~\ref{cormultizeros2}.

To deduce Conjecture~\ref{conj1intro}, we may therefore restrict (using the decomposition \eqref{eqdcp2}) to a simple function $g$, and even (using Lemma~\ref{lemsplebis}) to $g(x)=P(e^{\beta x})$ with $P\in\Qbar[X]$ and $\beta\in\Qbar\etoile$. Factoring $P$ in $\Qbar[X]$, what remains to consider is the case where $P(X)=X-y_0$ for some $y_0\in\Qbar$. Then $g'$ does not vanish, so that $g$ has only simple roots. 

To deduce Conjecture~\ref{conj2intro}, we consider $f,g\in\calE\setminus\{0\}$ such that $f/g$ is entire. Dividing $f$ and $g$ by their gcd if necessary, we may assume that $\gcd(f,g)=1$. Then all zeros of $g$ are zeros of $f$, but $f$ and $g$ have no common zeros due to Theorem~\ref{th1}. Therefore $g$ does not vanish: it is a unit of $\calE$, and $f/g$ is an $E$-function. This proves the first part of Jossen's conjecture; the second one follows at once from Theorem~\ref{th1}.
\end{proof}

\bigskip

\begin{proof}[Proof of Theorem~\ref{th1}] Upon dividing $f_1$ and $f_2$ by their gcd (which exists unconditionally if $f_1$ and $f_2$ are simple), we may assume that $\gcd(f_1,f_2)=1$. Let $\xi\in\C$ be a common zero of $f_1$ and $f_2$. Then in the decomposition \eqref{eqdcp2} of $f_1$ (resp. of $f_2$), at least one factor vanishes at $\xi$. There are 3 possibilities.

First, let us consider the case where  there exist $V_1,V_2\in\calV$ such that $g_1=s_{V_1}(f_1)$ and $g_2= s_{V_2}(f_2)$ vanish at $\xi$; in particular this happens if $f_1$ and $f_2$ are simple. Recall that $g_i(x)$ can be written as $x^{-\oun P_i} u_i(x)P_i(e^{\beta_i x})$ with $u_i\in\Ecroix$, $\beta_i\in\supp(g_i)\setminus\{0\}$, and $P_i\in\Qbar[X]\setminus\{0\}$. Since $g_i(\xi)=0$, we have $\xi\neq 0$ and  $e^{\beta_i \xi}$ is a root of $P_i$ and therefore an algebraic number. Now $e^{\beta_1 \xi}$ and $e^{\beta_2 \xi}$ are both algebraic, so that $\beta_1/\beta_2$ is rational  using the Gel'fond-Schneider Theorem. Accordingly we may write $g_i(x) = x^{-\oun P_i} u_i(x)P_i(e^{\beta x})$ with $\beta$ independent from $i$; and $g=\gcd(g_1,g_2)$ is defined by $g(x) = P(e^{\beta x})$ where $P$ is the gcd of $P_1$ and $P_2$ in $\Qbar[X]$. If $\xi$ is a common zero of $g_1/g$ and $g_2/g$ then $e^{\beta \xi}$ is a common zero of $P_1/P$ and $P_2/P$, which is impossible since these polynomials are coprime. 

If there exist $h_1,h_2\in\calI$ such that $v_{h_1}(f_1)$ and $v_{h_2}(f_2)$ are positive and $h_1(\xi)=h_2(\xi)=0$, then Conjecture~\ref{conjirred} implies $h_1=h_2$, and this irreducible $E$-function divides $\gcd(f_1,f_2)=1$: this is a contradiction.

The last possibility (up to swapping $f_1$ and $f_2$) is that there exist $V\in\calV$ and $h\in\calI$ such that both $s_V(f_1)$ and $h$ vanish at $\xi$, with $v_h(f_2)\geq 1$. Writing $s_V(f_1)(x)=x^{-\oun P} u(x)P(e^{\beta x})$ with $u\in\Ecroix$, $P\in\Qbar[X]$ and $\beta\in \Qbar$, we obtain that $e^{\beta \xi}$ is algebraic. Since $h(\xi)=0$ this contradicts Conjecture~\ref{conjzerolog}, and concludes the proof of Theorem~\ref{th1}.
\end{proof}

\bigskip

We can deduce now an analogue of Corollary~\ref{cordicho} for $E$-functions.

\begin{theo} \label{th2} Assume that Conjectures \ref{conjfact}, \ref{conjirred} and \ref{conjzerolog} hold. Let $\xi\in\C$ be a zero of a non-zero $E$-function. Then one, and only one, of the following holds:
 \begin{itemize}
 \item We have $\xi\neq0$, $e^{\beta\xi}\in\Qbar$ for some $\beta\in\Qbar\etoile$, and for any $f\in\calE$ with $f(\xi)=0$ there exists $N\geq 1$ such that $f$ is divisible by $e^{\beta x/N}-e^{\beta\xi/N}$ in $\calE$.
 \item We have $h(\xi)=0$ for some irreducible $h\in\calE$, and $h$ divides in $\calE$ any $E$-function that vanishes at $\xi$.
 \end{itemize}
\end{theo}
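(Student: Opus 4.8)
The plan is to establish the dichotomy in Theorem~\ref{th2} by reducing everything to the factorization structure provided by Conjecture~\ref{conjfact} together with Theorem~\ref{th1}, which was just proved under the same hypotheses. First I would fix a zero $\xi\in\C$ of some non-zero $E$-function $f_0$, and use the decomposition \eqref{eqdcp2} of $f_0$: since $f_0(\xi)=0$, at least one factor on the right-hand side vanishes at $\xi$. This factor is either a simple part $s_V(f_0)$ or an irreducible $h\in\calI$. These two cases are what will give the two bullets, and the bulk of the work is to show that they are mutually exclusive and that each leads to the stated divisibility conclusion.

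In the simple case, I would write $s_V(f_0)(x) = x^{-\oun P}u(x)P(e^{\beta x})$ using Lemma~\ref{lemsplebis}, with $\beta\in\Qbar\etoile$ and $P\in\Qbar[X]$ non-vanishing at $0$. Since $s_V(f_0)(\xi)=0$ and $\xi\neq0$ (a simple function does not vanish at $0$), the value $e^{\beta\xi}$ is a root of $P$, hence algebraic; this gives $e^{\beta\xi}\in\Qbar$ and in particular $\xi\neq0$, matching the first bullet. To obtain the divisibility statement, I would mimic the argument in the proof of Corollary~\ref{cordicho}: given any $f\in\calE$ with $f(\xi)=0$, Theorem~\ref{th1} forces $\gcd(f,s_V(f_0))$ to vanish at $\xi$, so $f$ has a simple factor $g_1(x)=P_1(e^{\beta_1 x})$ (up to a unit) vanishing at $\xi$. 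The Gel'fond--Schneider theorem yields $\beta_1/\beta\in\Q$, so after passing to a common $\beta_2=\beta/N$ both $\beta$ and $\beta_1$ become integer multiples of $\beta_2$; then $e^{\beta_2\xi}$ is a common root of the two rewritten polynomials, whence $e^{\beta_2 x}-e^{\beta_2\xi}$ divides $f$ in $\calE$. This is exactly the first bullet with $\beta_2$ in place of $\beta/N$.

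In the irreducible case, some $h\in\calI$ with $v_h(f_0)\geq1$ satisfies $h(\xi)=0$. For any $f\in\calE$ with $f(\xi)=0$, Theorem~\ref{th1} again gives that $\gcd(f,h)$ vanishes at $\xi$, so it is not a unit; since $h$ is irreducible this forces $\gcd(f,h)=h$ up to a unit, and hence $h$ divides $f$. This is the second bullet. The remaining and genuinely delicate point is \emph{exclusivity}: I must rule out that $\xi$ falls into both cases simultaneously. If it did, there would exist an irreducible $h$ with $h(\xi)=0$ and simultaneously $e^{\beta\xi}\in\Qbar$ for some $\beta\in\Qbar\etoile$; but this is precisely forbidden by Conjecture~\ref{conjzerolog}. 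Conversely, if $\xi$ is a zero of no irreducible $E$-function, then the vanishing factor in \eqref{eqdcp2} must be simple, placing $\xi$ in the first case. The main obstacle will be handling this exclusivity cleanly together with the edge behaviour at $\xi=0$: one must note that $x$ itself is an irreducible $E$-function vanishing at $0$, so $\xi=0$ lands unambiguously in the second bullet, consistent with the statement which attaches $\xi\neq0$ only to the first alternative.
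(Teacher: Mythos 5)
Your proposal is correct, and its overall architecture coincides with the paper's proof: mutual exclusivity of the two bullets via Conjecture~\ref{conjzerolog}, decomposition \eqref{eqdcp2} of $f_0$ so that either a simple or an irreducible factor vanishes at $\xi$, and Theorem~\ref{th1} applied to $f$ together with that factor in each case; your irreducible case is exactly the paper's, and your explicit remark about $\xi=0$ is correct (the paper leaves it implicit, since simple functions do not vanish at $0$ by definition). The one step where you genuinely deviate is the alignment of exponents in the simple case: you transplant the Gel'fond--Schneider argument from the proof of Corollary~\ref{cordicho}, whereas the paper needs no transcendence input at this point. Indeed, by Definition~\ref{defgcd} one has $\gcd(f,s_V(f_0))=\gcd(s_V(f),s_V(f_0))$, and $s_V(f)$ has support $V$ by definition, so by Definition~\ref{defisimplegcd} this gcd is already a polynomial in $e^{\beta x/N}$ for suitable $N$: the commensurability $\beta_1/\beta\in\Q$ is built into the gcd machinery. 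In fact this redundancy is present in your own write-up, since your $g_1$ is produced as a non-unit divisor of the simple function $s_V(f_0)$ and therefore automatically has support $V$. Your route does work --- it is the only one available in the ring $\calP$ of exponential polynomials, where Corollary~\ref{cordicho} lives and no gcd keeps track of supports --- but if you keep it, note that Gel'fond--Schneider requires the base to be different from $1$: when $e^{\beta\xi}=e^{\beta_1\xi}=1$ one concludes directly, since then $\beta\xi,\beta_1\xi\in 2i\pi\Z\setminus\{0\}$ gives $\beta_1/\beta\in\Q$, and when exactly one of them equals $1$ one applies the theorem with the other as base (the paper's proof of Corollary~\ref{cordicho} makes the same elision, so this is a flaw inherited from the source you are imitating, not a new one). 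Either way, your conclusion --- that $e^{\beta x/N}-e^{\beta\xi/N}$ divides $f$ in $\calE$, with $\beta$ fixed once and for all and only $N$ depending on $f$ --- is correctly reached, so the proof stands.
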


This result shows that there are (conjecturally) two types of zeros of $E$-functions. The first one corresponds to $\xi=\frac{\log(\alpha)}{\beta}$ with $\alpha,\beta\in\Qbar\etoile$ and any determination of $\log(\alpha)$; the second one includes (conjecturally) zeros of Bessel functions $J_\alpha$, $\alpha\in\Q\setminus\{\pm1/2\}$, and values at algebraic points of the Lambert $W$ function. Moreover Theorem~\ref{th2} gives a kind of generalization of minimal polynomials (resp. conjugates) of algebraic numbers; in the second case, it would be the irreducible function $h$, which is unique up to multiplication by a unit of $\calE$ (resp. its zeros).

If $\xi$ is algebraic then the conclusion of Theorem~\ref{th2} holds inconditionally. Indeed the first case cannot occur due to the Hermite-Lindemann Theorem, and the second one holds with $h(x)=x-\xi$ using \cite[Proposition~4.1]{beukers}. We see that the minimal function of $\xi$ would be $x-\xi$, which is reasonable: recall that $\Qbar[X]\subset \calE$. Maybe the minimal polynomial of $\xi$ over $\Q$ could be recovered in this setting by restricting to $E$-functions with rational coefficients, but we did not try to do it.

\bigskip

\begin{proof}[Proof of Theorem~\ref{th2}] First of all, Conjecture~\ref{conjzerolog} shows that both cases of Theorem~\ref{th2} cannot occur simultaneously. Let $f_0\in\calE\setminus\{0\}$ and $\xi\in\C$ be such that $f_0(\xi)=0$. Decomposing $f_0$ as in Conjecture~\ref{conjfact}, at least one factor vanishes at $\xi$: either a simple function or an irreducible one.

If a simple function $g(x)=x^{-\oun P}u(x)P(e^{\beta x})$ vanishes at $\xi$, then $e^{\beta \xi}$ is algebraic. We write $V={\rm Span}_{\Q}(\beta)=\supp(g)$ and consider any $f\in\calE\setminus\{0\}$ such that $f(\xi)=0$. Theorem~\ref{th1} shows that $\xi$ is a zero of $\gcd(f,g)=\gcd(s_V(f),g)$. We have $s_V(f)(x)=x^{-\oun Q}v(x)Q(e^{\beta' x})$ for some $v\in\Ecroix$, $Q\in\Qbar[X]$ and $\beta'\in V\setminus\{0\}$. Since $\beta$ and $\beta'$ span the same $\Q$-vector space, there exists $N\geq 1$ such that $\beta'$ is an integer multiple of $\beta/N$. Then $s_V(f)$, $g$ and $\gcd(s_V(f),g)$ can be written (up to units) as polynomials in $e^{\beta x/N}$ multiplied by suitable powers of $x$. This provides $S\in\Qbar[X]$ and $w\in\Ecroix$ such that $\gcd(s_V(f),g)(x)=x^{\oun S}w(x)S(e^{\beta x/N})$ vanishes at $\xi$. We have $S(e^{\beta \xi/N})=0$ so that $S(X) = (X - e^{\beta \xi/N})T(X)$ for some $T\in\Qbar[X]$, since $e^{\beta \xi/N}$ is algebraic. Then $e^{\beta x/N}-e^{\beta \xi/N}$ divides $\gcd(s_V(f),g)=\gcd(f,g)$, and therefore $f$, in $\calE$. This concludes the proof in the case where a simple function vanishes at $\xi$. 

Let us assume now that $h(\xi)=0$ for some irreducible $E$-function $h$; we assume $h$ to be normalized. For any $f\in\calE\setminus\{0\}$ such that $f(\xi)=0$, Theorem~\ref{th1} shows that $\gcd(f,h)=h^{\min(1, v_h(f))}$ vanishes at $\xi$. Therefore $v_h(f)\geq 1$ and $h$ divides $f$. This concludes the proof of Theorem~\ref{th2}.
\end{proof}

\noindent St\'ephane Fischler, Universit\'e Paris-Saclay, CNRS, Laboratoire de math\'ematiques d'Orsay, 91405 Orsay, France.

\medskip

\noindent Tanguy Rivoal, Universit\'e Grenoble Alpes, CNRS, Institut Fourier, CS 40700, 38058 Grenoble cedex 9, France.

\bigskip

\noindent Keywords: $E$-functions, Exponential polynomials, Schanuel's conjecture.

\bigskip

\noindent MSC 2020: 11J91, 33B20. 

\end{document}